\newcommand{\be}{\begin{equation}}
\newcommand{\ee}{\end{equation}}
\newcommand{\beq}{\begin{eqnarray}}
\newcommand{\eeq}{\end{eqnarray}}
\newtheorem{thm}{Theorem}[section]
\newtheorem{lma}{Lemma}[section]
\newtheorem{prop}{Proposition}[section]
\newtheorem{claim}{Claim}[section]
\theoremstyle{remark}
\newtheorem{rem}{Remark}[section]
\numberwithin{equation}{section}
\def\be{\begin{equation}}
\def\ee{\end{equation}}
\def\bee{\begin{equation*}}
\def\eee{\end{equation*}}
\def\lf{\left}
\def\ri{\right}
\def\K{K\"ahler }
\def\KR{K\"ahler-Ricci }
\def\Ric{\text{\rm Ric}}
\def\Rm{\text{\rm Rm}}
\def\p{\partial}
\def\rheat{\lf(\frac{\p}{\p t}-\Delta_{g(t)}\ri)}
\def\t{\tilde}
\def\e{\varepsilon}
\def\a{{\alpha}}
\def\b{{\beta}}
\def\R{\mathbb{R}}
\def\V{\mathrm{Vol}}
\def\ddb{\sqrt{-1}\partial\bar\partial}
\begin{document}

\title[]
{Gap Theorem on Riemannian manifolds using Ricci flow}

\author{Pak-Yeung Chan}
\address[Pak-Yeung Chan]{Department of Mathematics, University of California, San Diego, La Jolla, CA 92093
}
\email{pachan@ucsd.edu}

\author{Man-Chun Lee}
\address[Man-Chun Lee]{Department of Mathematics, The Chinese University of Hong Kong, Shatin, N.T., Hong Kong
}
\email{mclee@math.cuhk.edu.hk}

\renewcommand{\subjclassname}{
  \textup{2020} Mathematics Subject Classification}
\subjclass[2020]{Primary 53E20, 53C20, 53C21
}

\date{\today}

\begin{abstract}
In this work, we use the Ricci flow approach to study the gap phenomenon of Riemannian manifolds with non-negative curvature and sub-critical scaling invariant curvature decay.  
The first main result is a quantitative Ricci flow existence theory without non-collapsing assumption. We use it to show that complete non-compact manifolds with non-negative complex sectional curvature and sufficiently small average curvature decay are necessarily flat. The second main result concerns three-manifolds with non-negative Ricci curvature of quadratic decay. By combining our newly established curvature estimate and method in \K geometry, we show that if the curvature decays slightly faster even in average sense,  the manifold must be flat. This strengthens a result of Reiris. In the compact case, we  use the Ricci flow regularization to generalize the celebrated Gromov-Ruh Theorem in this direction.
\end{abstract}

\keywords{Gap Theorem, Gromov-Ruh Theorem, Ricci flow}

\maketitle

\markboth{Pak-Yeung Chan and Man-Chun Lee}{}

\section{Introduction}
Let $(M^n,g)$ be a complete Riemannian manifold.  The purpose of this work is to study the geometric quantity
$$k(x,r)=r^2 \fint_{B_{g_0}(x,r)}|\Rm(g_0)|\, d\mathrm{vol}_{g_0}$$
on manifolds with non-negative or almost non-negative curvature.  The quantity is natural in the sense that it is scaling invariant and is sometimes regarded as the $L^1$ version of Morrey bound on curvature.  We consider the case where volume is rescaled so that $k(\cdot,r)$ measures the flatness in an average sense. In \K geometry, it has been studied extensively and is deeply related to the function theory, for example see \cite{NiTam,NiShiTam}. 

The geometric significance 
of $k(\cdot ,r)$ is on two-folds. In the non-compact case, when $r$ tends to infinity, it measures the complexity of the infinity in an averaging sense and is related to the gap phenomenon of flat metric. When $M$ is complete non-compact with non-negative curvature, we are interested in asking how much positive curvature such a manifold could have. 
By the classical theorem of  Bonnet-Meyer,  it is clear that the curvature can't be too positive point-wise otherwise the manifold will be closing up at infinity contradicting the non-compactness.  One might ask to what extent  a non-flat metric will stay away from the flat metric at infinity. In dimension two,  it is not difficult to construct metric with zero curvature outside compact set, non-negative curvature everywhere, and positive curvature somewhere while in higher dimensions, 
the situation is different.  The first result of this type was originated by Mok-Siu-Yau \cite{MokSiuYau1981} where it was proved that when the complex dimension $m\geq 2$,  $M$ is isometrically biholomorphic to the Euclidean space if it has non-negative holomorphic bisectional curvature, Euclidean volume growth and has ``faster-than-quadratic" 
curvature decay.  Shortly after,  Greene-Wu \cite{GreeneWu1982} considered the more general Riemannian case and proved that manifold with a pole, with ``faster-than-quadratic" 
curvature decay  and with non-negative sectional curvature is necessarily flat if the dimension of the manifold is $\geq 3$ except when the dimension is 4 or 8.  The pole assumption was later removed by  Eschenburg-Schroeder-Strake \cite{EschenburgSchroederStrake1989} and Dress \cite{Dress}.  See also the relatively more recent work of Greene-Petersen-Zhu \cite{GreenePetersenZhu1994}.   

On the other hand,  in the \K case,  effort has been made to  improve the gap in Theorem of Mok-Siu-Yau. In \cite{Ni2012},  Ni found the optimal gap in term of $k(\cdot,r)$ which states that a complete non-compact \K manifold must be flat if it has  non-negative holomorphic bisectional curvature and $k(x_0,r)=o(1)$ as $r\to +\infty$ for some $x_0\in M$.  See also \cite{NiNiu2020,ChenZhu2002,Li2016} for more related works.  The method employed by Ni \cite{Ni2012} is based on finding the Ricci potential via  solving the Poincar\'e-Lelong equation on non-compact \K manifolds.  With the sub-critical decay rate of $k(o,r)$ as $r\to +\infty$ for some $o\in M$. One can find $u\in C^\infty_{loc}(M)$ such that $\ddb u=\Ric(g_0)$ and $u=o(\log r)$.  Hence, the flatness of $g_0$ follows from a  Liouville Theorem for pluri-subharmonic  on non-negatively curved \K manifolds. Thus, the K\"ahler structure of the manifold plays an important role. 

In this work we are interested in its Riemannian analogy. One first needs to identify a suitable non-negativity of curvature.  Our first result is partially motivated by the parallel theory of \KR flow. We consider the complex sectional curvature in general dimension.   To clarify the notation, we say that an 
algebraic curvature tensor 
$\mathrm{R}$ has non-negative complex sectional curvature $\mathrm{K}^\mathbb{C}(\mathrm{R})\geq 0$ if to each two-complex-dimensional subspace $\Sigma$,  the complexified $\mathrm{R}$ satisfies $\mathrm{R}(u,v,\bar u,\bar v)\geq 0$ for all unitary basis $\{u,v\}$ of $\Sigma$.  If one instead asks for non-negativity of complex sectional curvature only for PIC1 sections, defined to be those $\Sigma$ that contain some non-zero vector $v$ whose conjugate $\bar v$ is orthogonal to $\Sigma$, then we say that $\mathrm{R}\in \mathrm{C}_{\mathrm{PIC1}}$.  When $n=3$,  non-negative complex sectional curvature is equivalent to non-negative sectional curvature while $\mathrm{R}\in \mathrm{C}_{\mathrm{PIC1}}$ is equivalent to $\Ric\geq 0$. When $n\geq 4$, one can equivalently describe $\mathrm{K}^\mathbb{C}(\mathrm{R})\geq 0$ by requiring 
$$\mathrm{R}_{1331}+\lambda^2 \mathrm{R}_{1441}+\mu^2 \mathrm{R}_{2332}+\lambda^2 \mu^2 \mathrm{R}_{2442}+2\lambda \mu \mathrm{R}_{1234}\ge 0$$ 
for any orthonormal four-frames $\{e_i\}_{i=1}^4$ and $\lambda,\mu\in [0,1]$. Similarly,  $\mathrm{R}\in \mathrm{C}_{\mathrm{PIC1}}$ if 
$$\mathrm{R}_{1331}+\lambda^2 \mathrm{R}_{1441}+ \mathrm{R}_{2332}+\lambda^2  \mathrm{R}_{2442}+2\lambda  \mathrm{R}_{1234}\ge 0$$ 
for any orthonormal four-frames $\{e_i\}_{i=1}^4$ and $\lambda\in [0,1]$. We refer interested readers to the book by Brendle \cite{BrendleBook} for an overview on their importance in differentiable sphere Theorem.   Motivated by Ni's optimal gap Theorem in the %
\K case,  we have the following gap Theorem in the Riemannian case under an asymptotic condition of $k(\cdot, r)$ and \textbf{without} volume growth assumption. 

\begin{thm}\label{Thm:GapTheorem-higherD} 
There exists $\e_0(n)>0$ such that the following holds: If $(M^n,g_0)$ is a complete non-compact manifold such that $n\geq 3$,  $\mathrm{K}^\mathbb{C}(g_0)\geq 0$ and
       \begin{equation}
\int^{+\infty}_0 s\left(\fint_{B_{g_0}(x,s)}|\Rm(g_0)|\, d\mathrm{vol}_{g_0} \right) ds<\e_0
       \end{equation}
       for all $x\in M$. Then $(M,g_0)$ is flat.
\end{thm}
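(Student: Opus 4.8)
The plan is to run the Ricci flow starting from $(M^n, g_0)$ and exploit the smalling effect of the integral curvature bound to show that the flow is immortal with curvature decaying to zero, whence $g_0$ itself must be flat. The key mechanism: under $\mathrm{K}^\mathbb{C}(g_0) \geq 0$, which is preserved by the flow (this is the Brendle–Schoen/Nguyen cone invariance), the scalar curvature and hence $|\Rm|$ satisfy good a priori bounds, and the smallness of $\int_0^\infty s\, k(x,s)\, \frac{ds}{s^2}$-type quantity feeds into a pseudolocality or doubling-type estimate.

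Concretely, first I would invoke (or establish, via the existence theory the authors say they develop) a short-time solution $g(t)$ to the Ricci flow on $M \times [0,T]$ with $g(0) = g_0$, with uniform curvature bound $|\Rm(g(t))| \le a/t$ coming from the local curvature estimates à la Simon–Topping, using that the $k$-quantity controls local volume ratios and local energy of curvature. Non-negativity of complex sectional curvature is preserved along this flow (on each compact region, via a maximum-principle/ODE-to-PDE argument with a suitable cutoff, or by the global results of Brendle for complete bounded curvature flows once we know boundedness), so $\Ric(g(t)) \ge 0$ and $\mathrm{R}(g(t)) \ge 0$ for all $t$. Second, I would show the flow is \emph{immortal}: the only obstruction to long-time existence is curvature blow-up, and a blow-up would contradict the quantitative smallness — here one uses that the scale-invariant integral $\int_0^\infty s\, \fint_{B(x,s)}|\Rm|\, d\mathrm{vol}$ is, up to constants, a monotone-type or at least controllable quantity along the flow (via the evolution of $\int |\Rm|$ under Ricci flow with $\mathrm{R} \ge 0$, where $\partial_t \, d\mathrm{vol} = -\mathrm{R}\, d\mathrm{vol} \le 0$ helps). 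Third, with $T = \infty$ and uniform decay $\sup_M |\Rm(g(t))| \to 0$ as $t \to \infty$ (from the $a/t$ bound), I would run a point-picking / blow-down argument: if $g_0$ were not flat, rescaling around a point and time where curvature is nontrivial would produce a nonflat complete ancient solution with $\mathrm{K}^\mathbb{C} \ge 0$ and vanishing asymptotic $k$, contradicting either a Liouville-type statement or directly the hypothesis by a covering/averaging estimate showing $k(x,s) \not\to 0$.

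Alternatively — and this may be cleaner — I would argue by a direct integral estimate without passing to a blow-down: along the flow, track $F(t) = \int_0^\infty s\, \fint_{B_{g(t)}(x,s)}|\Rm(g(t))|\, d\mathrm{vol}_{g(t)}\, ds$; show $F$ stays below $\e_0$ (or some controlled multiple), then use that $|\Rm(g(t))| \le a/t$ together with $\mathrm{R} \ge 0$ and the resulting Euclidean-type volume lower bounds (Bishop–Gromov downstairs, plus almost-rigidity from small curvature integral) to force, via a trace Harnack or the evolution equation for $\mathrm{R}$, that $\mathrm{R}(g_0) \equiv 0$; non-negative complex sectional curvature with zero scalar curvature then gives $\Rm(g_0) \equiv 0$ pointwise since the scalar curvature is the full trace of a non-negative (in the appropriate algebraic sense) operator.

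The main obstacle I anticipate is the Ricci flow \emph{existence} step in the absence of any curvature bound or volume growth assumption on $g_0$: the hypothesis only controls a weighted $L^1$ average of $|\Rm|$, so constructing even a short-time complete solution requires the delicate local estimates the authors flag as "developing the Ricci flow existence theory" — presumably a Shi-type / Hochard–Topping-type local existence where the $k$-integral smallness substitutes for the usual $\sup|\Rm| < \infty$, giving a priori local volume non-collapsing and a $c/t$ curvature bound with $c = c(\e_0, n) \to 0$ as $\e_0 \to 0$. Getting that constant to be small (so that curvature positivity and the integral bound survive all the way to $t = \infty$) is the crux; everything afterward is a fairly standard rigidity argument.
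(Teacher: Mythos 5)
Your overall strategy (run a Ricci flow, get the $\alpha/t$ curvature bound, preserve $\mathrm{K}^\mathbb{C}\geq 0$, prove immortality, and then force $\mathcal{R}(g_0)\equiv 0$ by a Harnack-type argument) points in the same general direction as the paper, but two of your concrete steps rest on inputs you do not have. The more serious one is your repeated appeal to volume non-collapsing: you assert that the $k$-quantity ``controls local volume ratios,'' gives ``a priori local volume non-collapsing,'' and later you invoke ``Euclidean-type volume lower bounds.'' The hypothesis only bounds a volume-\emph{normalized} curvature average, so it carries no volume information at all: flat but arbitrarily collapsed manifolds have $k\equiv 0$, and Theorem~\ref{Thm:GapTheorem-higherD} deliberately makes no volume growth assumption (the paper contrasts it with the Bryant soliton, which \emph{does} have Euclidean volume growth; the non-collapsed case is the separate Theorem~\ref{thm:gap-PIC1}). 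The paper's way around this is the heat kernel estimate of Lemma~\ref{lma:improved-heat}: a Gaussian upper bound with $\mathrm{Vol}_{g_0}\left(B_{g_0}(x,\sqrt{t})\right)$ in the denominator, obtained from Wang's local entropy almost-monotonicity together with a Tian--Zhang-type comparison, precisely so that no non-collapsing is needed. In the final estimate the initial-ball volume factors cancel against the volume-averaged curvature, and the only volume input is Yau's linear volume growth (from $\mathrm{Ric}\geq 0$), used for the bounded-radius part of the integral. Without a substitute for this mechanism, your step ``use Euclidean-type volume lower bounds plus almost-rigidity to force $\mathcal{R}(g_0)\equiv 0$'' does not get off the ground.

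The second gap is the long-time existence step. Your proposed mechanism --- that a curvature blow-up would contradict controllability of $\int |\mathrm{Rm}|$ along the flow --- is unsubstantiated; the evolution of such integrals is not controlled in this generality. The paper instead gets immortality from scale invariance: since $\int_0^{+\infty} s^{-1}k(x,s)\,ds<\varepsilon_0$ holds at every scale, the quantitative short-time theorem (Theorem~\ref{Thm:RF-existence}) applies with arbitrary $r$, giving a flow with $|\mathrm{Rm}(g(t))|\leq \alpha_n t^{-1}$ on $[0,S_n r^2]$ for every $r$, and a compactness/diagonal argument produces the long-time solution (Proposition~\ref{prop:LongTime}). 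Note also that $\alpha_n$ is a dimensional constant, not small in $\varepsilon_0$; the smallness of $\varepsilon_0$ is used only at the end, where the maximum principle gives $\mathcal{R}(x_0,t)\leq \int_M G(x_0,t;y,0)\,\mathcal{R}(y,0)\,d\mathrm{vol}_{g_0}(y)$, splitting this integral at the radius beyond which the tail of the hypothesis is $<\delta$ yields $t\,\mathcal{R}(x_0,t)\leq C\delta+o(1)$ as $t\to+\infty$, and Brendle's Harnack inequality for complete flows with $\mathrm{K}^\mathbb{C}\geq 0$ (monotonicity of $t\mathcal{R}$) converts this asymptotic decay into $\mathcal{R}\equiv 0$ for all $t>0$, whence flatness by the strong maximum principle. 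Your closing algebraic remark (nonnegative complex sectional curvature plus vanishing scalar curvature gives $\mathrm{Rm}\equiv 0$) is fine, but the collapsing-insensitive heat kernel bound and the Harnack step are the essential missing ingredients in your outline.
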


 We would like to stress that the gap here is scaling invariant and can be computed explicitly. Moreover, $g_0$ is not necessarily of Euclidean volume growth.  In particular, the Ricci flow theory and compactness theory are not well-understood in this setting. To the best of our knowledge,  this seems to be the first gap Theorem in Riemannian case under condition in $k(\cdot,r)$. In contrast with the \K case by Ni \cite{Ni2012}, the asymptotic assumption on $k(\cdot,r)$ is stronger. The \K structure is however unnecessary and in particular, attacking the question using the function theory is in fact unavailable at the present. Our approach is largely motivated by a gap Theorem of Chen-Zhu \cite{ChenZhu2003} which relies on using the long-time asymptotic of \KR flow. The strategy we use is to deform the given metric by Ricci flow for all time and to analyse its long-time asymptotic behaviour. By showing that the blow-down Ricci flow is asymptotically flat, one can prove 
that the metric is initially flat using Brendle's Harnack inequality \cite{Brendle2009} in case of $\mathrm{K}^\mathbb{C}(g(t))\geq 0$ or Colding's volume convergence \cite{CheegerColding1997} in case of Euclidean volume growth. 
To implement the strategy,  the fundamental difficulty
is to run the flow for all time.  Although the metric is expected to be flat or close to be flat under suitable average curvature decay condition, the manifold can still be a-priori very complicated at infinity. In particular, there is no general existence theory on producing the Ricci flow even for a short-time. Although a theory for Ricci flow starting with general complete manifolds with non-negative complex sectional curvature was developed by Cabezas-Rivas and Wilking \cite{CW}, we need to develop one with quantitative estimates. And more importantly, we need to find a way to extract curvature estimate from the \textit{initial} asymptotic behaviour of $k(\cdot,r)$, both in $r\to 0$ and $r\to +\infty$. In this regard, we obtain a heat kernel estimate for the operator $\partial_t-\Delta_{g(t)}-\mathcal{R}_{g(t)}$. The heat kernel estimate was previously studied in \cite{CW} for \textbf{non-collapsing} Ricci flows with scaling invariant curvature decay.   In this work, we combine the ideas in \cite{CW} with the idea of the relative volume comparison Theorem in  \cite{TianZhang2021} to establish a heat kernel estimate which \textbf{does not} rely on volume non-collapsing. Using this, we establish a quantitative short-time existence theory which \textit{only} relies on $k(\cdot, r)$ and a lower bound of curvature. For notation convenience, throughout this work we will use $a\wedge b$ to denote $\min\{a,b\}$ for $a,b\in \mathbb{R}$. 

\begin{thm}\label{Thm:RF-existence}
For any $n\geq 3$, there exist 
$\e_0(n),\a_n, S_n,L_n>0$ such that the following holds: Suppose $(M,g_0)$ is a complete manifold and for some $r>0$ and $\Lambda_0\in (0,1)$, the initial metric $g_0$ satisfies 
    \begin{enumerate}
    \item[(a)] $\inf_M K(g_0)>-\infty$; 
        \item[(b)] 
       \begin{enumerate}
           \item[(i)] $\mathrm{Rm}(g_0)+\Lambda_0r^{-2} g_0\owedge g_0/2
           \in \mathrm{C}_{\mathrm{PIC1}}$ if $n\geq 4$; or
           \item[(ii)] $K(g_0)+\Lambda_0 r^{-2}\geq 0$ if $n=3$,
       \end{enumerate} 
       \item[(c)] for all $x\in M$,
       \begin{equation}
 \int^{r}_0 s\left(\fint_{B_{g_0}(x,s)}|\Rm(g_0)|\, d\mathrm{vol}_{g_0} \right) ds< \e_0.
       \end{equation}
    \end{enumerate}
    Then there exists a complete short-time solution $g(t)$ to the Ricci flow on $M\times [0,S_n(r^2\wedge \mathrm{diam}(M,g_0)^2)]$ with $g(0)=g_0$ and satisfies 
    \begin{enumerate}
        \item[(I)] $\sup_M |\Rm(g(t))|\leq \a_n t^{-1}$;
        \item [(II)] $\mathrm{Rm}(g(t))+L_n\Lambda_0 r^{-2}g(t)\owedge g(t)/2 \in \mathrm{C}_{\mathrm{PIC1}}$ if $n\geq 4$;
        \item [(III)] $K(g(t))+L_n\Lambda_0 r^{-2}\geq 0$ if $n=3$
    \end{enumerate}
    on $M\times (0,S_n(r^2\wedge \mathrm{diam}(M,g_0)^2)]$.    When $M$ is non-compact, $\mathrm{diam}(M,g_0)$ is understood to be $+\infty$.
\end{thm}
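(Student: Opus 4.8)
The plan is to prove the short-time existence Theorem \ref{Thm:RF-existence} by a combination of local smoothing estimates, a maximum-principle argument for the curvature condition, and an open-closed continuity argument for the maximal time of existence. The heart of the matter is producing \emph{a priori} estimates that depend only on $\inf_M K(g_0)$, $\Lambda_0$, $r$, and the smallness parameter $\e_0$ in hypothesis (c), rather than on any two-sided curvature bound or volume non-collapsing.

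\medskip

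\emph{Step 1: Set up an approximating sequence of complete flows.} Following Cabezas-Rivas--Wilking \cite{CW}, I would first exhaust $M$ by relatively compact domains and solve Ricci flow on a sequence of closed manifolds (after suitable doubling/gluing outside a large ball, or using their construction directly), producing complete solutions $g_k(t)$ with uniformly controlled geometry on compact subsets. The condition (b)(i)/(ii) asserts that $g_0$ lies in (a quantitative neighbourhood of) a curvature cone $\mathcal{C}$ which is preserved by Ricci flow and contains $\mathrm{C}_{\mathrm{PIC1}}$; by Hamilton's maximum principle (in the complete setting, justified via the barrier/Shi-type bounds coming from $\inf K(g_0)>-\infty$), the shifted tensor $\mathrm{Rm}(g_k(t))+(\text{const})\cdot\Lambda_0 r^{-2}g_k\owedge g_k/2$ stays in $\mathcal{C}$ for a definite time; tracking the constant carefully gives conclusions (II) and (III) with $L_n$ depending only on $n$. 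This also yields, via Shi's estimates, smooth convergence of a subsequence $g_k(t)\to g(t)$ on $M\times(0,T]$ once a uniform curvature bound (I) is in hand; the remaining issue is to continuously attach the initial condition $g(0)=g_0$.

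\medskip

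\emph{Step 2: The curvature decay estimate (I) from $k(\cdot, s)$.} This is the crux. I would establish a heat-kernel estimate for $g_k(t)$ that does \emph{not} use volume non-collapsing, by adapting the techniques of \cite{CW}, Wang \cite{Wang2018}, and Tian--Zhang \cite{TianZhang2021}: the near-non-negativity of curvature gives a Perelman-type monotonicity or a gradient estimate for the conjugate heat kernel, yielding pointwise control $|\Rm(g_k(t))|(x)\lesssim \sup_{y}\fint_{B_{g_0}(y,\sqrt t)}|\Rm(g_0)|\,d\mathrm{vol}_{g_0} \cdot \frac1t \cdot (\text{integral factor})$. Integrating the quantity $k(\cdot, s)$ against $ds/s$ and using hypothesis (c) — namely that $\int_0^r s\,\fint_{B_{g_0}(x,s)}|\Rm(g_0)|\,ds<\e_0$ — one converts the smallness of this scale-invariant integral into a bound $|\Rm(g_k(t))|\le \a_n/t$ on a time interval $[0,S_n(r^2\wedge \mathrm{diam}^2)]$, for $\e_0=\e_0(n)$ sufficiently small. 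The point is that hypothesis (c) precisely controls the ``concentration of curvature at small scales'' that could obstruct a definite-time smoothing bound. Passing to the limit $k\to\infty$ preserves (I) with the same $\a_n$.

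\medskip

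\emph{Step 3: Continuity at $t=0$ and conclusion.} With (I) in hand, Shi's estimates give uniform $C^\infty_{\mathrm{loc}}$ bounds on $(0,S_n(\cdots)]$; a distance-distortion estimate (using $\Ric\ge -\,(\text{shifted bound})$ from Step 1, which is controlled by $\a_n/t$ plus $L_n\Lambda_0 r^{-2}$) shows $g_k(t)$ is uniformly equivalent to $g_0$ on compacta and converges locally smoothly to a complete solution $g(t)$; then a local pseudolocality-type argument together with the fact that $k(x,s)\to 0$ as $s\to 0$ (which follows from (c), since the integrand is integrable) gives $g(t)\to g_0$ in $C^0_{\mathrm{loc}}$ as $t\to 0^+$, completing the verification of $g(0)=g_0$. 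Finally one assembles (I), (II), (III) and notes that all constants $\e_0,\a_n,S_n,L_n$ depend on $n$ alone. I expect \emph{Step 2} to be the main obstacle: extracting a scale-invariant curvature-decay bound purely from the $L^1$-integral condition (c), without volume non-collapsing, requires a delicate heat-kernel argument, and getting the dependence on $\e_0$ sharp enough that $\e_0$ is a dimensional constant is the technically hardest part.
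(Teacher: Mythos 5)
Your overall strategy coincides with the paper's at the level of the core estimate: the paper first proves a bounded-curvature pseudolocality statement (Theorem~\ref{thm:pseudo-locality}) by a maximal-interval bootstrap exactly of the open-closed type you describe, in which the $\mathrm{C}_{\mathrm{PIC1}}$-defect and a twisted scalar curvature are represented against a heat kernel whose Gaussian bound does not use non-collapsing. Two ingredients make your Step 2 actually close, and you should be aware of them: Lemma~\ref{lma:R-improve} (under $\mathrm{C}_{\mathrm{PIC1}}$ one has $\Ric\leq\tfrac12\mathcal{R}\,g$), which yields $\rheat\varphi\leq\mathcal{R}\varphi$ for the (twisted) scalar curvature and hence a linear heat-kernel representation, with $|\Rm|$ then controlled by the scalar curvature via the cone condition; and Wang's entropy almost-monotonicity (Lemma~\ref{lma:improved-heat}), which converts the evolving-ball volume in the heat kernel bound into $\mathrm{Vol}_{g_0}(B_{g_0}(x,\sqrt t))$, so that hypothesis (c) — which is stated for the \emph{initial} metric — can be integrated against the kernel to give $\mathbf{II}\leq C\e_0 t^{-1}$ and close the bootstrap with $\e_0$ a dimensional constant.

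The genuine gap is your Step 1. Cabezas-Rivas--Wilking's construction cannot be "used directly" here: it rests on the Cheeger--Gromoll convex exhaustion and doubling of convex bodies, which requires genuinely non-negative (complex sectional) curvature, whereas Theorem~\ref{Thm:RF-existence} only assumes an almost-$\mathrm{C}_{\mathrm{PIC1}}$ condition plus a sectional lower bound; and an ad hoc "doubling/gluing outside a large ball" will in general produce a non-smooth metric and, after any interpolation, destroy both hypothesis (b) and the smallness (c) in the gluing region, precisely where you have no control. The paper instead exhausts $M$ by sublevel sets of a smoothed distance function (Lemma~\ref{lma:Exhaustion}) and conformally blows up the metric near the boundary, $g_{R,0}=e^{2F_R}g_0$, in the style of Topping and Hochard \cite{Topping2010,Hochard,LeeTam2020}, obtaining complete bounded-curvature approximants to which Theorem~\ref{thm:pseudo-locality} applies; the substantive work — Claims~\ref{claim:PIC1-preserved} and \ref{claim:Morrey-preserved} — is to verify that these modified metrics still satisfy the almost-$\mathrm{C}_{\mathrm{PIC1}}$ condition and the integral smallness with uniform constants, a verification entirely absent from your plan and not replaceable by the maximum principle (the issue is at time zero for the modified initial data, not along the flow). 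Finally, attaching the initial condition is done not by a pseudolocality argument but by noting $g_{R,0}=g_0$ on each compact set for $R$ large and invoking Chen's local estimate \cite{Chen2009} together with modified Shi derivative bounds to get uniform local control down to $t=0$ before passing to the limit; your $C^0_{\mathrm{loc}}$ continuity claim would need some such mechanism to be made precise.
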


Here $\owedge$ denotes the Kulkarni-Nomizu product and $\frac 12 g\owedge g$ refers to the curvature tensor of standard %
sphere.  The quantitative existence theory also provides us a long-time existence of Ricci flow with estimates under the assumption in Theorem~\ref{Thm:GapTheorem-higherD}, see Proposition~\ref{prop:LongTime}. We require small integral bound on $k(x,r)$ uniform in $x\in M$. This is deeply related to the uniform regularization of the Ricci flow throughout $M$. In fact, the asymptotic of $k(\cdot,r)$ when $r\to 0$ detects how regular the centre is.  In particular, if the curvature is bounded,  $k(\cdot ,r)=O(r^2)$  while the metric cone as a singular model will have $k(o,r)=O(1)$ at the tip $o$.  In this sense, the integrability of $r^{-1}k(\cdot,r)$ at $r=0$ is indeed measuring  asymptotically how flat the tangent cone is. It is also interesting to compare this with Shi's long-time existence criteria of the \KR flow \cite{Shi1997} in the \K case with bounded non-negative bisectional curvature. In comparison with Theorem~\ref{Thm:RF-existence}, Shi showed that in this case, the \KR flow exists for all time with curvature decay in $\a t^{-1}$ for some $\a>0$ if $k(x,r)$ is uniformly bounded for all $x\in M$ and $r>0$. In view of the application of the \KR flow to Yau's uniformization conjecture, it will be important to see to what extent the result of \KR flow can be generalized to Ricci flow.

 On the other hand, it is interesting to compare Theorem~\ref{Thm:GapTheorem-higherD} with the Bryant expanding Ricci soliton with positive curvature operator which has 
Euclidean volume growth and $|\Rm|\leq C(d_g(x,x_0)^2+1)^{-1}$ but is non-flat (see \cite{Bryant2005, ChowBookI}).  Using the method we develop in the proof of Theorem~\ref{Thm:GapTheorem-higherD},  we shows that in the case of Euclidean volume growth, %
if we 
strengthen 
the decay rate slightly even in the average sense, then the manifold is necessarily isometric to the Euclidean space. In this sense, the gap Theorem is optimal. 

\begin{thm}\label{thm:gap-PIC1}
 Suppose $(M^n,g_0)$ is a complete non-compact manifold for $n\geq 3$ such that $g_0$ has Euclidean volume growth and 
\begin{enumerate}
    \item[(i)]$\mathrm{Rm}(g_0)\in \mathrm{C}_{\mathrm{PIC1}}$ if $n\geq 4$ or;
    \item[(ii)] $\mathrm{K}(g_0)\geq 0$ if $n=3$.
\end{enumerate} 
 If there exists $x_0\in M$ such that 
 $$k(x_0,r)=r^2\fint_{B_{g_0}(x_0,r)} |\Rm(g_0)|\,d\mathrm{vol}_{g_0}=o(1)$$
    as $r\to +\infty$, then $(M,g_0)$ is isometric to the flat Euclidean space. 
\end{thm}

Our second main result concerns the three dimension case which is motivated by the work of Reiris.   In \cite{Reiris2015}, he studied the relation between the point-wise quadratic curvature decay and volume non-collapsing.  Among other things, he showed that if $g_0$ is a complete metric on $\mathbb{R}^3$ with quadratic curvature decay, then it must be of Euclidean volume growth.  It is natural to ask what we can say if we strengthen the curvature decay slightly.  At the same time, one might compare the setting with Theorem~\ref{thm:gap-PIC1}.  Even if the $\mathrm{PIC1}$ is a natural generalization of $\Ric\geq 0$ in dimension three, the Ricci flow does not behave in the same way as in the proof of Theorem~\ref{thm:gap-PIC1}. We are able to overcome this by using some special feature in dimension three. By using Liu's splitting Theorem \cite{Liu2013}, method in \K geometry \cite{Ni2012} and the heat kernel estimate, we establish the following:
\begin{thm}\label{thm:3D-gap}
Suppose $(M^3,g_0)$ is complete non-compact manifold such that $\Ric(g_0)\geq 0$ and for some $x_0\in M$, we have 
\begin{equation}
     \limsup_{x\to +\infty} \left( d_{g_0}(x,x_0)^2 \cdot \mathcal{R}(g_0(x))\right)<+\infty \quad \text{and}\quad 
     k(x_0,r)=o(1)
\end{equation}
as $r\to +\infty$. Then $(M^3,g_0)$ is flat.
\end{thm}

Here $\mathcal{R}$ denotes the scalar curvature. By taking the product of $\mathbb{R}$ with a metric in dimension two which is conical at infinity, we can easily see that the asymptotic of $k(x_0,\cdot)$ is optimal.  This strengthens the result of Reiris \cite{Reiris2015} and improves the gap Theorem in \cite{GreeneWu1982,GreenePetersenZhu1994}. The analogous statement in higher dimensions cannot be true unless we strengthen the curvature or topological conditions which can be easily seen from the Eguchi-Hanson metric.

Theorem~\ref{Thm:RF-existence} can be regarded as a result of regularization with possibly collapsing initial data. The situation where the volume is collapsing with almost vanishing curvature has been studied extensively in the past. In particular,  the Gromov–Ruh Theorem \cite{Gromov1978,Ruh1982} states that if one normalizes the diameter of $(M,g)$ to be 1, then $M$ is diffeomorphic to an infranil manifold if its curvature is sufficiently small depending only on some dimensional constant. 
It has recently been generalized by Chen-Wei-Ye \cite{ChenWeiYe2022} to $L^{n/2}$ bound of curvature weighed by squared Sobolev constant. Motivated by their method, we generalize the Gromov–Ruh Theorem in direction using  $k(\cdot,r)$.

\begin{thm}\label{thm:almost-flat}
    For any $n\geq 3$, there exists $\e_0(n)>0$ such that the following holds: A compact manifold $M^n$ is diffeomorphic to an infranil manifold if it admits a metric $g_0$ such that 
    \begin{enumerate}
        \item[(i)] $\mathrm{Rm}(g_0)+ \e_0\cdot \mathrm{diam}(g_0)^{-2} g_0\owedge g_0\in \mathrm{C}_{\mathrm{PIC1}}$ if $n\geq 4$;
        \item[(ii)] $\mathrm{K}(g_0)\geq -\e_0\cdot \mathrm{diam}(g_0)^{-2}$ if $n=3$;
        \item[(iii)] for all $x\in M$,
\begin{equation}
\int^{\mathrm{diam}(M,g_0)}_0 s\left( \fint_{B_{g_0}(x,s)}|\Rm(g_0)|\, d\mathrm{vol}_{g_0}\right)\, ds<\e_0.
       \end{equation}
    \end{enumerate}
\end{thm}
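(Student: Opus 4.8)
The plan is to regularize $g_0$ by the Ricci flow furnished by Theorem~\ref{Thm:RF-existence}, to use hypothesis (iii) to show that after a definite (dimensional) amount of flowing the curvature is small relative to the diameter, and then to invoke the classical Gromov--Ruh theorem \cite{Gromov1978,Ruh1982}. After rescaling I may assume $\mathrm{diam}(M,g_0)=1$; I will fix $\e_0=\e_0(n)>0$ small enough for the estimates below. With $r=1$ and $\Lambda_0=2\e_0$ (taking $\Lambda_0=\e_0$ when $n=3$), hypotheses (i)--(iii) are precisely the hypotheses of Theorem~\ref{Thm:RF-existence}, so I obtain a Ricci flow $g(t)$ on $M\times[0,S_n]$ with $g(0)=g_0$, $\sup_M|\Rm(g(t))|\le\a_n t^{-1}$, and $\mathrm{Rm}(g(t))+L_n\e_0\,g(t)\owedge g(t)/2\in\mathrm{C}_{\mathrm{PIC1}}$ (resp.\ $K(g(t))+L_n\e_0\ge 0$); tracing the latter gives $\Ric(g(t))\ge -C_n\e_0\,g(t)$.

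Next I would control the diameter. Fixing a dimensional $t_0\le S_n$ small enough that the distance distortion estimates for Ricci flow, applied with $|\Rm(g(t))|\le\a_n t^{-1}$ (whose square root is integrable in $t$), give $d_{g(t_0)}\ge d_{g_0}-\frac12$, one gets $\mathrm{diam}(M,g(t_0))\ge\frac12$; and $\Ric(g(t))\ge -C_n\e_0\,g(t)$ on $[0,t_0]$ gives $\mathrm{diam}(M,g(t_0))\le e^{C_n\e_0 t_0}\le 2$ for $\e_0$ small. So at time $t_0$ the metric $h:=g(t_0)$ has $|\Rm(h)|\le\Lambda_n:=\a_n/t_0$, diameter in $[\frac12,2]$, and still the almost $\mathrm{C}_{\mathrm{PIC1}}$ lower bound with defect $L_n\e_0$. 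This much only yields $|\Rm(h)|\,\mathrm{diam}(M,h)^2\le 4\Lambda_n$, a fixed dimensional constant; the actual content of the theorem is that hypothesis (iii) forces this product to be \emph{small}.

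The crux is thus to upgrade ``$|\Rm|$ bounded'' to ``$|\Rm|$ small'' by propagating the smallness of $k(\cdot,r)$ along the flow. I would first record that hypothesis (iii), combined with $B_{g_0}(x,s)=M$ for $s\ge 1$ and a Bishop--Gromov covering estimate (using $\Ric(g_0)\ge -C_n\e_0$) giving $\fint_M|\Rm(g_0)|\le C_n\e_0$, yields $\fint_{B_{g_0}(x,\rho)}|\Rm(g_0)|\le C_n\e_0\max(\rho^{-2},1)$ for all $\rho\le R_0$ and all $x$, with $R_0\ge 4$ fixed. The heat-kernel estimate underlying the proof of Theorem~\ref{Thm:RF-existence} controls $|\Rm(g(t))|(x)$ by $t^{-1}$ times a heat-kernel average of the initial curvature density, and feeding the last bound into it (the Gaussian decay of the kernel and Bishop--Gromov volume doubling making the dyadic sum over $B_{g_0}(x,2^j\sqrt t)$ converge) gives $|\Rm(g(t))|\le C_n\e_0\,t^{-1}$ for $t\in(0,t_0]$; the constant $\a_n$ of Theorem~\ref{Thm:RF-existence} is just the worst case when $\e_0$ is merely below the threshold. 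If one prefers to treat Theorem~\ref{Thm:RF-existence} as a black box, the same conclusion is reached by propagating the averaged $L^1$ bound $\int_0^R s\,\fint_{B_{g(t)}(x,s)}|\Rm(g(t))|\,ds\le C_n\e_0\max(1,R^2)$ in $t$ and then, over a further interval $[t_0,t_0+\tau_0]$ of dimensional length $\tau_0\sim\Lambda_n^{-1}$ on which $|\Rm|\le\Lambda_n$, using that $e^{-C_n\Lambda_n(t-t_0)}|\Rm(g(t))|$ is a subsolution of the heat equation: then $|\Rm(g(t_0+\tau_0))|(x)\le C_n\int_M p(x,y,\tau_0)|\Rm(h)|(y)\,d\mathrm{vol}_h$ with the Gaussian upper bound for the heat kernel $p$ gives $|\Rm(g(t_0+\tau_0))|\le C_n\e_0$ — and \emph{no} volume lower bound is needed, since $\mathrm{vol}_h(B_h(x,\sqrt{\tau_0}))$ in the denominator of the heat-kernel bound cancels the same factor implicit in $\fint_{B_h(x,\rho)}|\Rm(h)|$.

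Either way I arrive at a metric $\tilde g$ on $M$ (namely $g(t_0)$ or $g(t_0+\tau_0)$) with $|\Rm(\tilde g)|\le C_n\e_0$ and, by distance distortion, $\mathrm{diam}(M,\tilde g)\le C_n$, so $|K(\tilde g)|\,\mathrm{diam}(M,\tilde g)^2\le|\Rm(\tilde g)|\,\mathrm{diam}(M,\tilde g)^2\le C_n\e_0$; choosing $\e_0=\e_0(n)$ small enough that this is below the threshold $\e_{\mathrm{GR}}(n)$ in Gromov's almost flat manifold theorem, the Gromov--Ruh theorem yields that $M$ is diffeomorphic to an infranilmanifold. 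The step I expect to be the main obstacle is exactly the propagation of the averaged $L^1$ curvature smallness along the Ricci flow — i.e.\ ruling out concentration of curvature in the scale-invariant sense measured by $k(\cdot,r)$ — since this is what converts the crude curvature estimate of Theorem~\ref{Thm:RF-existence} into one tending to $0$ with $\e_0$; a secondary technical point, already present in the proof of Theorem~\ref{Thm:RF-existence}, is controlling the heat kernel without a volume non-collapsing hypothesis, which the cancellation of volume factors noted above achieves.
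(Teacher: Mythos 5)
Your primary route is essentially the paper's proof: normalize $\mathrm{diam}(M,g_0)=1$, run the flow from Theorem~\ref{Thm:RF-existence} (for compact $M$ this is Theorem~\ref{thm:pseudo-locality}) with $\Lambda_0$ comparable to $\e_0$, re-run the interior estimate with the smallness hypothesis to upgrade $\a_n t^{-1}$ to $C_n\e_0 t^{-1}$, evaluate at the definite time $S_n$, bound the diameter from above via the almost-$\mathrm{PIC1}$ Ricci lower bound, and invoke Gromov--Ruh. One precision is needed in your route (a): the heat-kernel representation cannot be applied to $|\Rm(g(t))|$ directly from $t=0$, since $|\Rm|$ only satisfies $(\partial_t-\Delta)|\Rm|\leq c_n|\Rm|^2$ and the available bound $\a_n t^{-1}$ is not integrable at $t=0$, so no Gronwall linearization is possible there. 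The paper instead applies the maximum principle to the twisted scalar curvature $\tilde{\mathcal{R}}$ of $\Rm(g(t))+2L_n\e_0\, g(t)\owedge g(t)$, which by Lemma~\ref{lma:R-improve} (valid because the almost-$\mathrm{PIC1}$ condition is preserved) satisfies an inequality of the form $(\partial_t-\Delta)\tilde{\mathcal{R}}\leq \mathcal{R}\tilde{\mathcal{R}}+c_n\e_0\tilde{\mathcal{R}}$, compatible with the kernel of $\partial_t-\Delta-\mathcal{R}$, and then converts back to $|\Rm|$ through the pinching $|\mathrm{R}|\leq \b_n\,\mathrm{scal}(\mathrm{R})$ for $\mathrm{C}_{\mathrm{PIC1}}$ tensors; the twist only contributes an extra $C_n\e_0$ in the end, giving $\mathrm{diam}^2|\Rm(g(S_n))|\leq C_n\e_0 S_n^{-1}+C_n\e_0$, below the Gromov--Ruh threshold for $\e_0$ small. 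Also, the step you flag as the main obstacle --- propagating the smallness of $k(\cdot,r)$ along the flow --- is not needed: the estimate at time $t$ is obtained directly from the initial data through the Gaussian bound of Lemma~\ref{lma:improved-heat} (which, via the entropy argument, requires no non-collapsing), exactly as in the proof of Theorem~\ref{thm:pseudo-locality}, so your alternative two-step argument on $[t_0,t_0+\tau_0]$ can be discarded; likewise the diameter lower bound you derive is unnecessary, only the upper bound $\mathrm{diam}(M,g(t))\leq C_n$ coming from $\Ric(g(t))\geq -1$ is used.
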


{\it Acknowledgement}: The second named author was partially supported by Hong Kong RGC grant (Early Career Scheme) of Hong Kong No. 24304222, a direct grant of CUHK and a NSFC grant.

\section{Some preliminaries  on Ricci flow}
The novel idea of this work is to regularize the initial metric $g_0$ using Ricci flow. This is a one parameter family of metrics $g(t)$ satisfying 
\begin{equation}
    \left\{
    \begin{array}{cc}
        \partial_tg(t) =-2\Ric(g(t));  \\
        g(0) =g_0 
    \end{array}
    \right.
\end{equation}
In this section, we will collect some preliminaries of Ricci flow technique which will be used throughout this work.

\subsection{Heat kernel estimates}
To obtain curvature estimate, we will make use of the heat kernel coupled with the Ricci flow.  Let $g(t)$ be a complete Ricci flow on $M\times[0,T]$ with $g(0)=g_0$.  Let $\Omega\Subset M$ be an open set with smooth boundary. We let $G(x,t;y,s),t>s$ be the dirichlet heat kernel for the backward heat equation coupled with the Ricci flow $g(t)$: 
\begin{equation}
\left\{
\begin{array}{ll}
\left(\partial_s+\Delta_{y,g(s)}\right)G(x,t;y,s)=0,\quad\text{on}\quad \Omega\times \Omega\times [0,t);\\
\lim_{s\to t^-} G(x,t;y,s)=\delta_x(y), \quad\text{for}\; x\in \Omega;\\
G(x,t;y,s)=0, \quad\text{for}\; x\in \Omega\; \text{and}\; y\in \partial\Omega.
\end{array}
\right.
\end{equation}
Then, 
\begin{equation}
\left\{
\begin{array}{ll}
\left(\partial_t-\Delta_{x,g(t)}-\mathcal{R}(g(x,t))\right)G(x,t;y,s)=0,\quad\text{on}\quad \Omega\times \Omega\times (s,T];\\
\lim_{t\to s^+} G(x,t;y,s)=\delta_y(x), \quad\text{for}\; y\in \Omega;\\
G(x,t;y,s)=0, \quad\text{for}\; y\in \Omega\; \text{and}\; x\in \partial\Omega.
\end{array}
\right.
\end{equation}
Such $G$ exists and is positive in the interior of $\Omega$, see \cite{Guenther2002}. In this work, all heat kernel will be referring to the heat kernel with respect to $\partial_t-\Delta_{g(t)}-\mathcal{R}$ as described above. We have the following heat kernel estimates modified from \cite{LeeTam2022} building on \cite{BamlerCabezasWilking2019}.

\begin{prop}\label{prop: heat Kernel}
For any $n,\a>0$, there exists $C_0(n,\a)>0$ such that the following holds: Suppose $(M^n,g(t))$ is a complete solution to the Ricci flow on $M\times [0,T]$ with $g(0)=g_0$ and satisfies 
\[
|\Rm(g(t))|\leq \a t^{-1} \text{  for some  } \a>0.
\]
Then for $0\le 2s\le t\le T$ and $x,y\in B_{g_0}(x_0,r)$,
\begin{equation}\label{G est eqn}
G(x,t;y,s)\le \frac{C_0}{\mathrm{Vol}_{g(t)}\left(B_{g(t)}(x,\sqrt{t})\right)}\exp\left(-\frac{d_{g(s)}^2(x,y)}{C_0t}\right).    
\end{equation}
where $G$ denotes the dirichlet heat kernel on $\Omega=B_{g_0}(x_0,2r)$. The same estimate also holds for heat kernel on $M$.
\end{prop}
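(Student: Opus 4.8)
\textit{Proof strategy.} The plan is to adapt the heat-kernel arguments of Bamler--Cabezas-Rivas--Wilking \cite{BamlerCabezasWilking2019} and Lee--Tam \cite{LeeTam2022}, exploiting that the hypothesis $|\Rm(g(t))|\le\a t^{-1}$ is scale invariant: after parabolic rescaling by $\theta^{-1}$ the flow $\theta^{-1}g(\theta\,\cdot\,)$ has $|\Rm|\le 2\a$ on $[1,2]$, so every dyadic time block $[\theta,2\theta]\subset(0,T]$ is, up to scaling, a \emph{bounded-curvature} Ricci flow of unit length. On each such block one has at one's disposal Shi's derivative estimates, the Hamilton--Perelman distance distortion estimate and the relative volume comparison of Tian--Zhang \cite{TianZhang2021}. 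Concretely, $|\Ric(g(\tau))|\le(n-1)\a\tau^{-1}$ gives the two-sided comparison $e^{-C(n)\a}g(\theta)\le g(\tau)\le e^{C(n)\a}g(\theta)$ on $[\theta,2\theta]$, the estimate $d_{g(\tau)}(p,q)\ge d_{g(\sigma)}(p,q)-C(n)\sqrt{\a}\,(\sqrt\tau-\sqrt\sigma)$ for $\sigma\le\tau$ (integrate $\partial_\tau d_{g(\tau)}\ge -C(n)\sqrt{\a/\tau}$), and $\mathrm{Vol}_{g(\tau)}(B_{g(\tau)}(x,\sqrt\tau))\le C(n,\a)\,\mathrm{Vol}_{g(\sigma)}(B_{g(\sigma)}(x,\sqrt\sigma))$ for $\sigma\le\tau\le2\sigma$. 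These let us interchange $g(s)$, $g(t)$ and the scales $\sqrt s,\sqrt t$ with their counterparts on any intermediate dyadic block up to a factor $C(n,\a)$; the hypothesis $2s\le t$ ensures $t-s\asymp t$, so $\sqrt t$ is the correct normalising scale. By parabolic rescaling we may normalise $t=1$.

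The two collapsing-robust inputs are a universal mass bound and an on-diagonal estimate. For the first, $h_\tau(y,\sigma):=\int_\Omega G(z,\tau;y,\sigma)\,d\mathrm{vol}_{g(\tau)}(z)$ solves the \emph{potential-free} backward heat equation $(\partial_\sigma+\Delta_{y,g(\sigma)})h_\tau=0$ on $\Omega\times[0,\tau)$ with $h_\tau(\cdot,\tau^-)\equiv 1$ and $h_\tau|_{\partial\Omega}=0$, so the maximum principle forces $0\le h_\tau\le1$, i.e. $\int_\Omega G(z,\tau;y,\sigma)\,d\mathrm{vol}_{g(\tau)}(z)\le1$ for all $\sigma<\tau\le T$. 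For the second, $e^{-Ct}G(x,t;y,s)$ is, on any bounded-curvature block, a nonnegative subsolution of the Ricci-flow-coupled heat equation $\partial_t-\Delta_{g(t)}$, and the scale-invariant parabolic mean value inequality --- the one with the factor $\mathrm{Vol}_{g(\tau)}(B_{g(\tau)}(x,\sqrt\tau))$, which is insensitive to collapsing and holds in bounded geometry at scales below the block size by volume doubling and Buser's Poincar\'e inequality --- combined with the mass bound and the semigroup identity $G(x,t;y,s)=\int_\Omega G(x,t;z,r)G(z,r;y,s)\,d\mathrm{vol}_{g(r)}(z)$ for $s<r<t$, yields
\[
G(x,\tau;y,\sigma)\le\frac{C(n,\a)}{\mathrm{Vol}_{g(\tau)}\big(B_{g(\tau)}(x,\sqrt{\tau-\sigma})\big)}\qquad(\sigma<\tau\le T).
\]

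To insert the Gaussian factor, one uses that on a bounded-curvature block the classical parabolic theory (Li--Yau--Grigor'yan--Saloff-Coste, in the time-dependent form valid along Ricci flow) upgrades the on-diagonal bound to
\[
G(z',\tau';z,\sigma')\le\frac{C(n,\a)}{\mathrm{Vol}_{g(\tau')}\big(B_{g(\tau')}(z',\sqrt{\tau'})\big)}\exp\!\left(-\frac{d_{g(\sigma')}^2(z',z)}{C(n,\a)(\tau'-\sigma')}\right)
\]
for $(\sigma',\tau')$ in the block, and then iterates the semigroup identity over a dyadic partition $s=\tau_0<\tau_1<\cdots<\tau_N=1$ with $\tau_{i+1}\le2\tau_i$ and $N\lesssim\log(t/s)$: the exponents are added by the triangle inequality, each $d_{g(\tau_i)}$ is converted to $d_{g(s)}$ by the distance distortion estimate (the additive errors $C(n,\a)\sqrt{\tau_i}$ being absorbed by the Gaussian, or making the claim trivial once $d_{g(s)}(x,y)\le C(n,\a)\sqrt t$), each ball-volume factor is replaced by $\mathrm{Vol}_{g(t)}(B_{g(t)}(x,\sqrt t))$ via the volume comparison, and the $N-1$ intermediate integrals are collapsed by the mass bound $\int_\Omega G\,d\mathrm{vol}\le1$; the geometric growth of the constant over the $N$ blocks is beaten by the decay coming from the small scales $\tau_i$ in the early Gaussians. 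This proves \eqref{G est eqn} for the Dirichlet kernel on $\Omega=B_{g_0}(x_0,2r)$, and exhausting $M$ by $B_{g_0}(x_0,2r)$ as $r\to\infty$ --- along which the Dirichlet kernels increase to the global heat kernel while the right-hand side is independent of $r$ --- gives the estimate on $M$.

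The main obstacle is the possibility of \emph{collapsing}: one cannot bound $G$ by $C\,t^{-n/2}\exp(-d^2/Ct)$, since $\mathrm{Vol}_{g(t)}(B_{g(t)}(x,\sqrt t))$ may be far smaller than $t^{n/2}$ and no uniform Euclidean-type Sobolev inequality is available. What makes the argument go through is precisely the scale invariance of $|\Rm(g(t))|\le\a t^{-1}$, which reduces everything to bounded-curvature blocks on which only the collapsing-robust local estimates (ball-volume-normalised mean value inequality, Bishop--Gromov and Tian--Zhang volume comparison, Hamilton--Perelman distance distortion) are invoked, glued by the semigroup property and the potential-free identity $\int_\Omega G(z,t;y,s)\,d\mathrm{vol}_{g(t)}(z)\le1$; this is where the ideas of \cite{CW}, \cite{Wang2018} and \cite{TianZhang2021} enter, as indicated in the introduction.
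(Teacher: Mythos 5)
Your setup matches the paper's: reduce to $t=1$ by parabolic scaling (the hypothesis $2s\le t$ making $\sqrt t$ the right scale), use the block Gaussian estimate on bounded-curvature dyadic time intervals (this is exactly Lemma~\ref{LT4.1}, quoted from \cite{LeeTam2022}/\cite{ChauTamYu2011}), the potential-free mass bound $\int_\Omega G(z,\tau;y,\sigma)\,d\mathrm{vol}_{g(\tau)}(z)\le 1$, Hamilton--Perelman distance distortion, volume comparison, and exhaustion for the global kernel. All of that is sound and is indeed how the paper proceeds.

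The gap is in the gluing step. Your ``one-shot'' chaining over the dyadic partition $s=\tau_0<\cdots<\tau_N=t$ cannot work as described, for two reasons. First, you cannot simultaneously replace \emph{every} factor in the iterated semigroup identity by its pointwise Gaussian bound \emph{and} ``collapse the intermediate integrals by the mass bound'': the mass bound applies to the kernels themselves, so once a factor has been replaced by $\frac{C}{\mathrm{Vol}}\exp(-d^2/C\tau_i)$ the identity $\int G\,d\mathrm{vol}\le 1$ is no longer available for that integration; conversely, if you keep all but one factor intact to use the mass bound, the single remaining Gaussian only sees one block and gives no decay in $d_{g(s)}(x,y)$ at scale $\sqrt t$. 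Second, the number of blocks is $N\sim\log(t/s)$, which is unbounded as $s\to 0$ --- and $s=0$ is precisely the case used later (Lemma~\ref{lma:improved-heat}, Theorem~\ref{thm:pseudo-locality}). Each intermediate spatial integration of a Gaussian-bounded factor costs a multiplicative constant $C(n,\a)$, and on the portions of the integral where the intermediate points barely move there is no spatial decay to compensate, so the product $C(n,\a)^N$ genuinely diverges; the claim that ``the decay coming from the small scales $\tau_i$ in the early Gaussians'' beats this is not substantiated. The paper avoids exactly this by the Bamler--Cabezas-Rivas--Wilking induction: with $t_k=16^{-k}$, $r_k=4d(1-2^{-k})$ and $a_k=\sup_{\Omega\setminus B_{g_0}(x,r_k)}G_\Omega(x,1;\cdot,t_k)$, one splits each semigroup integral into $B_k=B_{g(t_k)}(y,d/2^k)$ and its complement, uses distance distortion to show $\Omega\cap B_k\subset\Omega\setminus B_{g_0}(x,r_k)$, the mass bound on one factor and the rough bound \eqref{G rough bdd} plus the block Gaussian on the other, and obtains the \emph{additive} recursion $a_{k+1}\le a_k+\epsilon_k$ with $\sum_k\epsilon_k\le \frac{C}{\mathrm{Vol}_{g(t_1)}(B_{g(t_1)}(x,\sqrt{1-t_1}))}e^{-d^2/C}$; no constant is multiplied across scales, so infinitely many blocks cause no loss. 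To repair your argument you would need either this additive induction or a genuinely different self-improvement scheme (e.g.\ a Grigor'yan/Davies-type argument from the on-diagonal bound with uniform doubling at each scale), neither of which is what your sketch describes.
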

\begin{rem} It is not difficult to see that the completeness condition of $g(t)$ can be replaced by the compactness of a suitable geodesic ball centred at $x_0$ and suitable curvature condition initially, for instances see \cite{LeeTam2022}. 
\end{rem}

Before proving Proposition \ref{prop: heat Kernel}, let's recall the following estimate in \cite{LeeTam2022} which is originated in \cite{ChauTamYu2011}.

\begin{lma}\cite{LeeTam2022}\label{LT4.1}
Let $(M^n, g_0)$ be a Riemannian manifold and $p\in M$. Suppose that $g(t)$ is a complete solution to the Ricci flow on $M\times [0,1]$ with $g(0)=g_0$ such that $|\Rm(x,t)|\le A$ on $M\times [0,1]$ for some $A>0$. If $\Omega$ is an open subset in $M$ with smooth boundary such that $\Omega\Subset B_0(p,r)$ and $G_{\Omega}(x,t;y,s)$ is the dirichlet heat kernel with respect to the backward heat flow on $\Omega\times \Omega\times [0,1]$. Then there exists a positive constant $C_1(n,A)>0$ such that for all $0\le s<t\le 1$, $0\le \tau\le 1$, $x,y\in \Omega$,
\[
G_{\Omega}(x,t;y,s)\le \frac{C_1}{\mathrm{Vol}_{g(\tau)}\left(B_{g(\tau)}(x,\sqrt{t-s})\right)}\times \exp\left(-\frac{d_{g(\tau)}^2(x,y)}{C_1(t-s)}\right);
\]
\[
G_{\Omega}(x,t;y,s)\le \frac{C_1}{\mathrm{Vol}_{g(\tau)}\left(B_{g(\tau)}(y,\sqrt{t-s})\right)}\times \exp\left(-\frac{d_{g(\tau)}^2(x,y)}{C_1(t-s)}\right).
\]
\end{lma}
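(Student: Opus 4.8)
The standard route, following \cite{ChauTamYu2011}, is to use the uniform curvature bound to freeze the metric up to controlled multiplicative error and then to run classical Gaussian heat-kernel technology (a parabolic mean-value inequality for the on-diagonal bound, Davies' exponential weighting for the off-diagonal Gaussian decay). Since $|\Rm(g(t))|\le A$ on $M\times[0,1]$ we have $|\Ric(g(t))|\le (n-1)A$, hence $|\partial_t g(t)|\le 2(n-1)A\,g(t)$ and
\[
e^{-2(n-1)A|t_1-t_2|}g(t_2)\le g(t_1)\le e^{2(n-1)A|t_1-t_2|}g(t_2)
\]
for all $t_1,t_2\in[0,1]$. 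Therefore distances $d_{g(\tau)}(x,y)$ and volumes $\mathrm{Vol}_{g(\tau)}(B_{g(\tau)}(x,\rho))$ at different times $\tau\in[0,1]$ are comparable up to multiplicative constants depending only on $n$ and $A$, and moreover $|\mathcal{R}(g(t))|\le n(n-1)A$. It thus suffices to establish the estimate for one convenient reference time, say $\tau=s$; the statement for general $\tau\in[0,1]$ then follows by absorbing these fixed constants into $C_1$, both in the volume denominator and --- after enlarging $C_1$ in the exponent --- in the Gaussian factor.

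For the on-diagonal bound, put $\widetilde G:=e^{-n(n-1)A(t-s)}G_\Omega$. Because $\mathcal{R}\le n(n-1)A$, as a function of $(x,t)$ the non-negative function $\widetilde G$ is a subsolution of the honest heat equation $(\partial_t-\Delta_{x,g(t)})\widetilde G\le 0$ on $\Omega$ with zero boundary values, so its extension by $0$ across $\partial\Omega$ is a subsolution on all of $M$. The metric equivalence above together with $\Ric\ge -(n-1)A$ yields a volume-adapted local Sobolev inequality and volume doubling on balls of radius at most one with constants depending only on $n$ and $A$; Moser iteration in the parabolic cylinder $B_{g(s)}(x,\sqrt{t-s})\times[\tfrac{s+t}{2},t]$ (after rescaling $t-s$ to unit size), together with the mass bound $\int_\Omega G_\Omega(x,t;y,s)\,d\mathrm{vol}_{g(s)}(y)\le e^{n(n-1)A}$ (obtained by domination by the global conjugate heat kernel and $|\mathcal{R}|\le n(n-1)A$), then gives
\[
G_\Omega(x,t;y,s)\le \frac{C(n,A)}{\mathrm{Vol}_{g(s)}\!\left(B_{g(s)}(x,\sqrt{t-s})\right)},
\]
and symmetrically with $y$ in place of $x$, interchanging the roles via the reproducing identity $G_\Omega(x,t;y,s)=\int_\Omega G_\Omega(x,t;z,\theta)G_\Omega(z,\theta;y,s)\,d\mathrm{vol}_{g(\theta)}(z)$. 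To upgrade to the full Gaussian bound one invokes the integrated maximum principle: for a fixed Lipschitz function $\xi$ with $|\nabla\xi|_{g(s)}\le 1$ one differentiates in $\theta$ the weighted energy $\int_\Omega G_\Omega(x,t;y,\theta)^2 e^{\lambda\xi(y)}\,d\mathrm{vol}_{g(\theta)}(y)$; the time-dependence of the metric and the potential $\mathcal{R}$ contribute only error terms bounded by $C(n,A)$, so the classical computation gives, with $\xi(y)=d_{g(s)}(x,y)$ and $\lambda$ optimized,
\[
\int_\Omega G_\Omega(x,t;y,s)^2\, \exp\!\left(\frac{d_{g(s)}(x,y)^2}{C(n,A)(t-s)}\right)d\mathrm{vol}_{g(s)}(y)\le \frac{C(n,A)}{\mathrm{Vol}_{g(s)}\!\left(B_{g(s)}(x,\sqrt{t-s})\right)}.
\]
Splitting the reproducing identity at the midpoint time and combining this $L^2$ Gaussian bound with the on-diagonal $L^\infty$ bound via Cauchy--Schwarz (Grigor'yan's argument), and finally trading $d_{g(s)}$ for $d_{g(\tau)}$ by the metric equivalence, yields the claimed pointwise estimate.

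The delicate step is the Gaussian off-diagonal decay: running Davies' exponential perturbation while the metric --- and with it $\nabla\xi$, $\Delta\xi$ and the volume form --- evolves in time, so that the energy estimate must be organised so that every such error term is absorbed by the curvature bound; and keeping the volume in the denominator genuinely local, i.e. a single metric ball at a single time rather than any global non-collapsed quantity, which is exactly what makes the estimate usable later for possibly collapsed data. The uniform metric equivalence coming from the bound $|\Rm|\le A$ on the fixed interval is what makes all of this go through with constants depending only on $n$ and $A$; alternatively one may quote the heat kernel bounds of \cite{BamlerCabezasWilking2019} directly.
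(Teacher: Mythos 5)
Your argument is correct in outline, but it takes a genuinely different route from the paper's. The paper treats this lemma as a repackaging of \cite[Lemma 4.1]{LeeTam2022}: its proof is a short reduction in which one splits into the cases $d_{g_0}(x,y)\le \sqrt{t-s}$ and $d_{g_0}(x,y)>\sqrt{t-s}$, uses volume comparison to pass from the form of the cited estimate (stated with the minimum, respectively the geometric mean, of the two ball volumes) to a single ball volume while absorbing the resulting ratio into the Gaussian factor, and then passes from $\tau=0$ to general $\tau\in[0,1]$ by exactly the distance and volume distortion under $|\Rm|\le A$ that you also invoke. You instead re-derive the Gaussian bound from scratch (mass bound, Moser iteration with a volume-adapted local Sobolev inequality, Davies exponential weighting plus Grigor'yan's splitting), i.e.\ you essentially reproduce the content of \cite{ChauTamYu2011} and of the cited lemma itself rather than quoting it. That is legitimate and self-contained, and the ingredients you list do hold with constants depending only on $n$ and $A$ on a unit time interval; the price is that the steps you wave through as standard are precisely the technical body of the cited references, and a couple of them need care: the $L^1$ bound that feeds the mean-value inequality should be taken in the variable in which you iterate (in the forward variable the mass is in fact $\le 1$, while your $e^{n(n-1)A}$ bound is the backward-variable one; either can be arranged), and the evolving-metric and potential terms in the weighted energy estimate must be tracked so that they only contribute factors $e^{C(n,A)(t-s)}$. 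The paper's route is shorter because it leverages the citation; yours buys independence from the precise form in which \cite{LeeTam2022} states its Lemma 4.1, at the cost of redoing the heat kernel analysis.
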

\begin{proof} Unless specified, the constants in this proof depend only on $n$ and $A$. Their exact value may change from line to line.
The estimate was stated in slightly different way in \cite[Lemma 4.1]{LeeTam2022}. We sketch the idea of how to derive the estimate in the lemma from \cite{LeeTam2022} for the sake of completeness. Let $x,y\in \Omega$ and $d=d_{g_0}(x,y)$. For $d\le \sqrt{t-s}$, by \cite[Lemma 4.1]{LeeTam2022},
\begin{equation}
    \begin{split}
G_{\Omega}(x,t;y,s)&\le \min\left\{\frac{C}{\V_{g_0} \left(B_{g_0} (x,\sqrt{t-s})\right) },\frac{C}{\V_{g_0} \left(B_{g_0} (y,\sqrt{t-s})\right) }\right\}\\
&\le\frac{Ce^{C^{-1}}}{\V_{g_0} \left(B_{g_0} (x,\sqrt{t-s})\right) }\times \exp\left(-\frac{d_{g_0}^2(x,y)}{C(t-s)}\right).
    \end{split}
\end{equation}

For $d> \sqrt{t-s}$, by the volume comparison theorem 
\begin{equation}
    \begin{split}
&\quad \exp\left(-\frac{d_{g_0}^2(x,y)}{2C(t-s)}\right)\frac{ \V_{g_0}^\frac12\left( B_{g_0}(x,\sqrt{t-s}\right))}{\V_{g_0}^\frac12\left( B_{g_0}(y,\sqrt{t-s}\right))}\\
&\le \exp\left(-\frac{d_{g_0}^2(x,y)}{2C(t-s)}\right)\frac{ \V_{g_0}^\frac12\left( B_{g_0}(y,d+\sqrt{t-s}\right))}{\V_{g_0}^\frac12\left( B_{g_0}(y,\sqrt{t-s}\right))}\leq C
    \end{split}
\end{equation}

Hence by \cite[Lemma 4.1]{LeeTam2022}, we have 
\begin{equation}
    \begin{split}
G_{\Omega}(x,t;y,s)&\le \frac{C}{\V_{g_0}^\frac12\left( B_{g_0}(x,\sqrt{t-s}\right))\V_{g_0}^\frac12\left( B_{g_0}(y,\sqrt{t-s}\right))}\times \exp\left(-\frac{d_{g_0}^2(x,y)}{C(t-s)}\right)\\
&\le \frac{C}{\V_{g_0}\left( B_{g_0}(x,\sqrt{t-s}\right))}\times \exp\left(-\frac{d_{g_0}^2(x,y)}{2C(t-s)}\right).
  \end{split}
\end{equation}
This completes the proof of the lemma for $\tau=0$. For general $\tau\in [0,1]$, $|\Rm|\le A$, we have
\[
e^{-(n-1)A}d_{g(\tau)}\le d_{g_0}\le e^{(n-1)A}d_{g(\tau)}\;\;\text{  on } M.
\]
Hence by the volume comparison and $|\Rm|\le A$,
\begin{equation}
    \begin{split}
\V_{g_0}\left( B_{g_0}(x,\sqrt{t-s}\right))&\ge c(n,A)\V_{g(\tau)}\left( B_{g(\tau)}(x,e^{-(n-1)A}\sqrt{t-s}\right))\\
&\ge c_1(n,A)\V_{g(\tau)}\left( B_{g(\tau)}(x,\sqrt{t-s}\right)).
 \end{split}
\end{equation}
The estimate on $G_{\Omega}$ for general $\tau$ then follows from the estimate when $\tau=0$. By switching the role of $x$ and $y$, we get the second estimate.
\end{proof}

Now we are in position to prove Proposition~\ref{prop: heat Kernel} using the idea in \cite{BamlerCabezasWilking2019}. 
\begin{proof}[Proof of Proposition \ref{prop: heat Kernel}]
The proof is almost identical to \cite[Proposition 3.1]{BamlerCabezasWilking2019}. Since the relaxation of non-collapsing assumption is important, we include the proof for readers' convenience. We only work on the dirichlet heat kernel. The global heat kernel follows from a similar argument. 
    By parabolic scaling and time shift, W.L.O.G., we may assume that $t=1$, $s=0$.

It suffices to show the following
\begin{equation}\label{gl after scal}
    G_\Omega(x,1;y,0)\le \frac{C}{\V_{g(1)}\left(B_{g(1)}(x,1)\right)}\exp\left(-\frac{d_{g(0)}^2(x,y)}{C}\right).
\end{equation}

We write $(0,1]=\cup_{k=0}^{\infty}[t_{k+1}, t_k]$, where $t_k:=16^{-k}$. By Lemma \ref{LT4.1}, there is a constant $C(n,\alpha)$ such that for all integer $k\ge 0$, $x,y \in \Omega$, $\tau\in [t_{k+1}, t_k]$,
\begin{equation}\label{yG est}
    \begin{split}
        G_{\Omega}(x,t_k;y,t_{k+1})\le \frac{C}{\V_{g(\tau)}\left(B_{g(\tau)}(x,\sqrt{t_k-t_{k+1}})\right)}\times \exp\left(-\frac{d_{g(\tau)}^2(x,y)}{C(t_k-t_{k+1})}\right);\\
         G_{\Omega}(x,t_k;y,t_{k+1})\le \frac{C}{\V_{g(\tau)}\left(B_{g(\tau)}(y,\sqrt{t_k-t_{k+1}})\right)}\times \exp\left(-\frac{d_{g(\tau)}^2(x,y)}{C(t_k-t_{k+1})}\right).
    \end{split}
\end{equation}

In particular, by taking $k=0$ and $\tau=t_1$, for any $y\in \Omega$,
\begin{equation}\label{a1 est}
G_{\Omega}(x,1;y,t_{1})\le \frac{C}{\V_{g(t_1)}\left(B_{g(t_1)}(x,\sqrt{1-t_{1}})\right)}\times \exp\left(-\frac{d_{g(t_1)}^2(x,y)}{C(1-t_1)}\right).
\end{equation}
Applying the maximum principle to $G_{\Omega}(x,1;\cdot,\cdot)$ on $\Omega\times [0,t_1]$ with dirichlet boundary condition, we have 
\begin{equation}\label{G rough bdd}
G_{\Omega}(x,1;\cdot,\cdot)\le \frac{C}{\V_{g(t_1)}\left(B_{g(t_1)}(x,\sqrt{1-t_{1}})\right)}\;\; \text{  on } \Omega\times [0,t_1].
\end{equation}
Let $d$ be a large constant with lower bound depending on $n$ and $\alpha$ and to be determined later. For positive integer $k$, let $r_k:=4d(1-2^{-k})$ and 
\[
a_k:=
\begin{cases}
\displaystyle\sup_{\Omega\setminus B_{g_0}(x,r_k)}G_{\Omega}(x,1;\cdot,t_k)\quad\text{  if }&\Omega\setminus B_{g_0}(x,r_k)\neq \phi;\\
0 &\text{  otherwise.}
\end{cases}
\]
By the continuity of $G_{\Omega}$, 
\begin{equation}\label{ak vs G}
\lim_{k\to\infty}a_k\ge\displaystyle\sup_{\Omega\setminus B_{g_0}(x,4d)}G_{\Omega}(x,1;\cdot,0).
\end{equation}
We claim that there are positive constants $C(n,\alpha)$ and $\underline{d}(n,\alpha)$ such that for all $d\ge\underline{d}$ and positive integer $k$, 
\[
a_{k+1}\le  \frac{C}{\V_{g(t_1)}\left(B_{g(t_1)}(x,\sqrt{1-t_{1}})\right)}\exp\left( -\frac{d^2}{C}\right).
\]
Let us assume the above claim and prove \eqref{gl after scal}. If $d_{g_0}(x,y)\le 4\underline{d}$, then by \eqref{G rough bdd} and the volume comparison,

\begin{equation}
\begin{split}
G_{\Omega}(x,1;y,0)&\le \frac{C}{\V_{g(t_1)}\left(B_{g(t_1)}(x,\sqrt{1-t_{1}})\right)}\\
&\le \frac{C}{\V_{g(1)}\left(B_{g(1)}(x,1)\right)}\exp\left(-\frac{d_{g_0}^2(x,y)}{C}\right).
    \end{split}
\end{equation}
Then \eqref{gl after scal} holds in this case. Suppose $d_{g_0}(x,y)> 4\underline{d}$. taking $d=d_{g_0}(x,y)/4$, we see from \eqref{ak vs G} and the claim that
\begin{equation}
\begin{split}
G_{\Omega}(x,1;y,0)&\le  \frac{C}{\V_{g(t_1)}\left(B_{g(t_1)}(x,\sqrt{1-t_1})\right)}\exp\left(-\frac{d_{g_0}^2(x,y)}{16C}\right)\\
&\le\frac{C}{\V_{g(1)}\left(B_{g(1)}(x,1)\right)}\exp\left(-\frac{d_{g_0}^2(x,y)}{16C}\right)\\
   \end{split}
\end{equation}
for some $C_1(n,\alpha)>0$. It remains to justify the claim. By the semi-group property, for any $y\in \Omega\setminus B_{g_0}(x, r_{k+1})$,
\[
G_{\Omega}(x, 1; y, t_{k+1})=\int_{\Omega}G_{\Omega}(x, 1; z, t_k)G_{\Omega}(z, t_k; y, t_{k+1})\,d \mathrm{vol}_{g(t_k)}(z).
\]
We estimate the integral on the right hand side by splitting it into the integrals over $B_k\cap \Omega$ and over $\Omega\setminus B_k$, where $B_k:=B_{g(t_k)}(y, d/2^k)$. The later integral is understood to be $0$ if $\Omega\setminus B_k$ happens to be empty. By Hamilton-Perelman distance distortion estimate \cite{Perelman2002} (c.f. \cite{SimonTopping2017}), if $z\in \Omega\cap B_{g_0}(x, r_k)$, then
\begin{equation}
\begin{split}
    d_{g(t_k)}(y,z)&\ge d_{g_0}(y,z)-c_n\sqrt{\a}\int_0^{t_k}t^{-1/2}\,dt\\
    &\ge d_{g_0}(y,x)-d_{g_0}(x,z)-2c_n\sqrt{\a t_k} \geq  2^{-k}d
   \end{split}
\end{equation}
provided $d\ge c_n\sqrt{\a}$. Hence 
$\Omega\cap B_k\subset \Omega\setminus B_{g_0}(x,r_k)$ and 
\begin{equation}
\begin{split}
&\int_{B_k\cap\Omega}G_{\Omega}(x, 1; z, t_k)G_{\Omega}(z, t_k; y, t_{k+1})\,d \mathrm{vol}_{g(t_k)}(z)\\
&\le \int_{\Omega\setminus B_0(x,r_k)}G_{\Omega}(x, 1; z, t_k)G_{\Omega}(z, t_k; y, t_{k+1})\,d \mathrm{vol}_{g(t_k)}(z)\leq  a_k.
   \end{split}
\end{equation}
By \eqref{yG est}, \eqref{G rough bdd} and the volume comparison, 
\begin{equation}
\begin{split}
&\quad \int_{\Omega\setminus B_k}G_{\Omega}(x, 1; z, t_k)G_{\Omega}(z, t_k; y, t_{k+1})\,d \mathrm{vol}_{g(t_k)}(z)\\
&\le\frac{C}{\V_{g(t_1)}\left(B_{g(t_1)}(x,\sqrt{1-t_{1}})\right)} \int_{\Omega\setminus B_k}G_{\Omega}(z, t_k; y, t_{k+1})\,d \mathrm{vol}_{g(t_k)}(z)\\
&\le \frac{2C'}{\V_{g(t_1)}\left(B_{g(t_1)}(x,\sqrt{1-t_{1}})\right)}\exp\left(-\frac{4^{-k}d^2}{2C'(t_k-t_{k+1})}\right).
\end{split}
\end{equation}
Combining the two integrals, we have 
\begin{equation}
\begin{split}
a_{k+1}&\le a_k+\frac{2C'}{\V_{g(t_1)}\left(B_{g(t_1)}(x,\sqrt{1-t_{1}})\right)}\exp\left(-\frac{4^{-k}d^2}{2C'16^{1-k}}\right)\\
&\le a_1+\frac{C''}{\V_{g(t_1)}\left(B_{g(t_1)}(x,\sqrt{1-t_{1}})\right)}\sum_{i=1}^k\exp\left(-\frac{4^i d^2}{C''}\right)\\
&\le \frac{C'''}{\V_{g(t_1)}\left(B_{g(t_1)}(x,\sqrt{1-t_{1}})\right)}\Big[\exp\left(-\frac{d_{g(t_1)}^2(x,y)}{C(1-t_1)}\right)\\
&\quad +\exp\left(-\frac{d^2}{C''}\right)\sum_{i=1}^k\exp\left(-\frac{(4^i-1) d^2}{C''}\right)\Big],
\end{split}
\end{equation}
where $y\in \overline{\Omega}\setminus B_0(x,2d)$. We also used \eqref{a1 est} to bound $a_1$ in the last inequality. Again by Hamilton Perelman distance distortion \cite{Perelman2002} (c.f. \cite{SimonTopping2017}), for $d\gg \underline{d}(n,\a)$, we have
\[
d_{g(t_1)}(x,y)\ge d_{g_0}(x,y)-c_n\sqrt{\a}\int_0^{t_1}t^{-1/2}\,dt\ge 2d-2c_n\sqrt{\a}\ge d.
\]
\[
a_{k+1}\le \frac{C}{\V_{g(t_1)}\left(B_{g(t_1)}(x,\sqrt{1-t_{1}})\right)}\exp\left(-\frac{d^2}{C}\right)
\]
as required.
\end{proof}

\subsection{Almost monotonicity of local entropy}
In view of Proposition~\ref{prop: heat Kernel}, it is important to compare the volume of the evolving ball with the initial one.  We will make use of the entropy. We recall the concept of local entropy introduced by Wang \cite{Wang2018}. Let $\Omega$ be a connected domain with possibly empty boundary in $M$, denote $$ D_g(\Omega):=\left\{u: u\in W^{1,2}_0(\Omega), u\geq 0 \text{  and  } \|u\|_{L^2(\Omega)}=1 \right\}$$ and consider the following quantities
\begin{equation}
\left\{
\begin{array}{ll}
&\displaystyle  W(\Omega, g, u, \tau):=\int_{\Omega}\left[\tau(\mathcal{R}u^2+4|\nabla u|^2)-2u^2\log u \right]\;   d\mathrm{vol}_g -\frac{n}{2}\log(4\pi\tau)-n;
\\\\
&\displaystyle  \mu(\Omega, g, \tau):= \inf_{u\in D_g(\Omega)}W(\Omega, g, u, \tau);
\\\\
&\displaystyle  \nu(\Omega, g, \tau):= \inf_{s\in (0,\tau]}\mu(\Omega,g,s)
\end{array}
\right.
\end{equation}
where $\mathcal{R}$ denotes the scalar curvature of $(M,g)$. 

The following Lemmas provide us the relation between entropy and the volume ratio under Ricci lower bound.
\begin{lma}\label{lma:Ric-to-Volratio} There exists $C_n>0$ such that the following holds.  Suppose $B_{g_0}(x_0,2r)\subset M$ with $\partial B_{g_0}(x_0,2r)\neq \emptyset$ and $\Ric(g_0)\geq -r^{-2}$ on $B_{g_0}(x_0,2r)$, then 
\begin{equation}\label{lma:Ric-ineq}
\log \frac{\mathrm{Vol}_{g_0}(B_{g_0}(x_0,r))}{r^n}\leq C_n+ \nu\left(B_{g_0}(x_0,r), g_0,r^2\right).
\end{equation}
In particular, if $\Ric(g_0)\geq 0$ on $M$, then \eqref{lma:Ric-ineq} holds for all $r>0$.
\end{lma}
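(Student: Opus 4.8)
The plan is to bound $\nu\big(B_{g_0}(x_0,r),g_0,r^2\big)$ from below. Set $\Omega:=B_{g_0}(x_0,r)$; unravelling the definitions of $\nu$ and $\mu$, it suffices to show that for every $s\in(0,r^2]$ and every $u\in D_{g_0}(\Omega)$ one has
$$W(\Omega,g_0,u,s)\ \geq\ \log\frac{\mathrm{Vol}_{g_0}\big(B_{g_0}(x_0,r)\big)}{r^n}-C_n .$$
The first and trivial step removes the scalar curvature term: $\mathrm{Ric}(g_0)\geq -r^{-2}$ on $B_{g_0}(x_0,2r)\supset\Omega$ gives $\mathcal{R}(g_0)\geq -nr^{-2}$ there, so since $s\leq r^2$ and $\|u\|_{L^2(\Omega)}=1$ we get $s\int_\Omega\mathcal{R}(g_0)\,u^2\,d\mathrm{vol}_{g_0}\geq -n$. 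Hence it remains to bound $\int_\Omega\big(4s|\nabla u|^2-2u^2\log u\big)\,d\mathrm{vol}_{g_0}-\tfrac n2\log(4\pi s)$ from below by $\log\big(\mathrm{Vol}_{g_0}(B_{g_0}(x_0,r))/r^n\big)-C_n$.

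The main step is a local logarithmic Sobolev inequality on $\Omega$, which I would derive from a local $L^{2n/(n-2)}$ Sobolev inequality. Since $\mathrm{Ric}(g_0)\geq -r^{-2}$ on $B_{g_0}(x_0,2r)$ — equivalently $\mathrm{Ric}(g_0)\ge -(n-1)Kg_0$ with $\sqrt{K}\,r=(n-1)^{-1/2}\le1$ — the Bishop--Gromov comparison yields $n$-dimensional volume doubling on scales $\le r$ inside $B_{g_0}(x_0,2r)$ with a purely dimensional constant; combined with Buser's local Neumann--Poincar\'e inequality this gives, for all $f\in W^{1,2}_0(\Omega)$, the scale-invariant Sobolev inequality
$$\|f\|_{L^{2n/(n-2)}(\Omega)}^2\ \leq\ \frac{C_n\,r^2}{\mathrm{Vol}_{g_0}\big(B_{g_0}(x_0,r)\big)^{2/n}}\Big(\|\nabla f\|_{L^2(\Omega)}^2+r^{-2}\|f\|_{L^2(\Omega)}^2\Big)$$
(the local Sobolev inequality of Saloff-Coste; see also \cite{Wang2018}). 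Feeding $f=u$ into this, applying Jensen's inequality with respect to the probability measure $u^2\,d\mathrm{vol}_{g_0}$ (legitimate as $\|u\|_{L^2(\Omega)}=1$), which produces $\int_\Omega 2u^2\log u\,d\mathrm{vol}_{g_0}\le\tfrac n2\log\|u\|_{L^{2n/(n-2)}(\Omega)}^2$, and finally the elementary bound $\log y\le\varepsilon y-1-\log\varepsilon$ with $\varepsilon$ tuned so that the gradient term becomes exactly $4s\|\nabla u\|_{L^2}^2$, one arrives at
$$\int_\Omega 2u^2\log u\,d\mathrm{vol}_{g_0}\ \leq\ 4s\|\nabla u\|_{L^2(\Omega)}^2-\tfrac n2\log(4\pi s)-\log\frac{\mathrm{Vol}_{g_0}\big(B_{g_0}(x_0,r)\big)}{r^n}+C_n .$$
It is precisely the sharp exponent $2n/(n-2)$ (valid since $n\ge 3$) together with $n$-dimensional doubling that makes the $\log s$ terms cancel and leaves only a dimensional constant, while $s\le r^2$ absorbs the leftover $4s/r^2$. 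Substituting into the estimate of the first step yields $W(\Omega,g_0,u,s)\ge\log\big(\mathrm{Vol}_{g_0}(B_{g_0}(x_0,r))/r^n\big)-C_n$ for all admissible $u$ and $s$, hence the stated lower bound for $\mu$, and therefore for $\nu$.

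For the ``in particular'' assertion, when $\mathrm{Ric}(g_0)\ge 0$ globally and $r>0$ is arbitrary: if $\partial B_{g_0}(x_0,2r)\ne\emptyset$ the argument above applies verbatim (the hypothesis $\mathrm{Ric}\ge -r^{-2}$ now being automatic), while if $B_{g_0}(x_0,2r)=M$ then $M$ is compact and \eqref{lma:Ric-ineq} follows by combining the case of smaller radii with the monotonicity of $\nu(\Omega,g,\tau)$ in $\tau$. The main obstacle is the second step: one must invoke (or reprove) the local Sobolev inequality on geodesic balls under a Ricci lower bound in exactly the scale-invariant, sharp-exponent form displayed above, and track constants carefully so that the volume enters as $\mathrm{Vol}_{g_0}(B_{g_0}(x_0,r))^{2/n}$; this is what produces the clean bound $\log(\mathrm{Vol}_{g_0}(B_{g_0}(x_0,r))/r^n)-C_n$ rather than an estimate contaminated by spurious $\log r$ or $\log s$ terms.
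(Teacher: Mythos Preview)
Your argument is correct and is precisely the content behind the paper's one-line proof, which simply cites \cite[Corollary 2.2]{TianZhang2021} and \cite[Theorem 3.6]{Wang2018}; those results establish the local log-Sobolev inequality from the Saloff-Coste local Sobolev inequality under a Ricci lower bound, exactly via the Jensen-plus-$\log y\le \varepsilon y-1-\log\varepsilon$ trick you outline. One small caveat: your treatment of the compact case in the ``in particular'' clause (when $B_{g_0}(x_0,2r)=M$) is sketchy---monotonicity of $\nu$ in $\tau$ alone does not immediately close the gap, since both sides of \eqref{lma:Ric-ineq} tend to $-\infty$ as $r\to\infty$---but in the paper's applications $M$ is either non-compact or the radii are explicitly bounded by the diameter, so this does not affect anything downstream.
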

\begin{proof}
This follows from \cite[Corollary 2.2]{TianZhang2021} and \cite[Theorem 3.6]{Wang2018}.
\end{proof}

The next result is proved by Wang stating that the entropy lower bound also gives rise to a lower bound of volume ratio under a bound on scalar curvature.
\begin{lma}\label{lma:entrop-back-Vol}Suppose $B_{g_0}(x_0,2r)\subset M$ is a geodesic ball with $\partial B_{g_0}(x_0,2r)\neq \emptyset$ and $\mathcal{R}(g_0)\leq \Lambda$ on $B_{g_0}(x_0,2r)$, then we have 
$$\log \frac{\mathrm{Vol}_{g_0}(B_{g_0}(x_0,r))}{r^n} \geq \nu\left(B_{g_0}(x_0,r), g_0,r^2\right)-C_n-\Lambda r^2.$$
\end{lma}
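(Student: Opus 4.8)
The plan is to bound $\nu$ from above by exhibiting a single admissible test function at a single scale. Since $\nu(\Omega,g,\tau)=\inf_{s\in(0,\tau]}\mu(\Omega,g,s)\le\mu(\Omega,g,\tau)$, and since the functional $W$ is invariant under the parabolic rescaling $(g,\tau)\mapsto(cg,c\tau)$ (with the competitor correspondingly normalized), so that $\nu(B_{g_0}(x_0,r),g_0,r^2)$ is unchanged, I would rescale $g_0$ so that $r=1$. Write $B=B_{g_0}(x_0,1)$ and $V=\V_{g_0}(B)$. Expanding the definition of $W$ at $\tau=1$, it then suffices to produce $u\in D_{g_0}(B)$ with
\[
\int_B\big(\mathcal{R}u^2+4|\nabla u|^2\big)\,d\mathrm{vol}_{g_0}-2\int_B u^2\log u\,d\mathrm{vol}_{g_0}\;\le\;\log V+\Lambda+C_n,
\]
since then $\mu(B,g_0,1)\le\log V+\Lambda+C_n-\tfrac n2\log(4\pi)-n$ and undoing the rescaling yields the lemma, the scalar term being the only source of the $\Lambda$ (hence $\Lambda r^2$) contribution.

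For the test function I would take $u=\phi/\|\phi\|_{L^2(B)}$, where $\phi\colon B\to[0,1]$ is a compactly supported Lipschitz cutoff with $\phi\equiv1$ on a subset of $B$ of volume $\ge\tfrac12V$ and $\int_B|\nabla\phi|^2\le\Theta_n V$ for a dimensional constant $\Theta_n$; the natural candidate is a smoothing of the radial function $(1-d_{g_0}(x_0,\cdot))_+$, or, if needed, a cutoff adapted to a sub-ball $B_{g_0}(x_0,\rho)$ carrying most of the volume. Granting such $\phi$, the three terms of $W$ are estimated by hand. The scalar term is $\int_B\mathcal{R}u^2\le\Lambda$ because $\mathcal{R}\le\Lambda$ on $B\subset B_{g_0}(x_0,2)$ and $\|u\|_{L^2}=1$. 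Since $\|\phi\|_{L^2}^2\ge\V(\{\phi=1\})\ge\tfrac12V$, the Dirichlet term obeys $4\|\phi\|_{L^2}^{-2}\int_B|\nabla\phi|^2\le 8\Theta_n$. For the entropy term, expand
\[
-2\int_B u^2\log u\,d\mathrm{vol}_{g_0}=-\frac{2}{\|\phi\|_{L^2}^2}\int_B\phi^2\log\phi\,d\mathrm{vol}_{g_0}+\log\|\phi\|_{L^2}^2,
\]
and use the elementary inequality $-t^2\log t\le(2e)^{-1}$ on $[0,1]$ with $0\le\phi\le1$ and $\supp\phi\subset B$ to get $-2\|\phi\|_{L^2}^{-2}\int_B\phi^2\log\phi\le e^{-1}V/\|\phi\|_{L^2}^2\le 2e^{-1}$, while $\log\|\phi\|_{L^2}^2\le\log\V(B)=\log V$. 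Adding the three bounds gives the displayed inequality with $C_n=8\Theta_n+2e^{-1}$ (plus the absorbed $\tfrac n2\log(4\pi)+n$).

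The point that is not routine is the construction of $\phi$ with $\|\phi\|_{L^2}^2\gtrsim_n V$ and $\int_B|\nabla\phi|^2\lesssim_n V$: there is no Ricci lower bound and hence no volume doubling, so a priori the volume of $B$ could concentrate in a thin shell near $\partial B$, where any admissible cutoff is forced to be small and the naive radial choice has $L^2$-mass far below its Dirichlet energy. This is where the scalar curvature upper bound must be used a second, less obvious time: $\mathcal{R}\le\Lambda$ forbids a thin ``neck'' inside $B$ (which would force large positive scalar curvature there) and thereby forces a definite fraction of $V$ to sit inside a sub-ball $B_{g_0}(x_0,\rho)$ with $\rho$ bounded away from $1$, which produces the required cutoff; the hypothesis $\partial B_{g_0}(x_0,2r)\ne\emptyset$ guarantees there is room for this radial comparison. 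I would carry out this step following the estimates of Wang \cite{Wang2018} (together with the $\nu$--volume comparison in \cite{TianZhang2021}), which is exactly what is being cited, and I expect it to be the main obstacle in a fully self-contained argument.
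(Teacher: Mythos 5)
Your reduction to a single test function at the top scale is exactly where the argument breaks. After rescaling to $r=1$ you bound $\nu(B,g_0,1)\le\mu(B,g_0,1)$ and then need a cutoff $\phi$ supported in $B=B_{g_0}(x_0,1)$ with $\|\phi\|_{L^2}^2\ge c_n\V_{g_0}(B)$ and $\int_B|\nabla\phi|^2\le C_n\V_{g_0}(B)$, i.e.\ a doubling-type statement, with dimensional constants, that a definite fraction of the volume lies in an interior sub-ball. You justify this by asserting that $\mathcal{R}\le\Lambda$ forbids volume concentration near $\partial B$. That assertion is false: in the ball of radius $1$ in hyperbolic space of curvature $-K^2$ one has $\mathcal{R}\le 0\le\Lambda$, yet $\V(B_{g_0}(x_0,\rho))/\V(B_{g_0}(x_0,1))\approx e^{-(n-1)K(1-\rho)}\to 0$ as $K\to\infty$ for every fixed $\rho<1$, so no cutoff with dimensional constants as above exists; an upper bound on scalar (or even sectional) curvature gives no lower control on the radial distribution of volume. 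In that example any admissible cutoff forces a Dirichlet term of order at least $K^2$ while $\log\V_{g_0}(B)\approx(n-1)K$ and $\Lambda$ may be taken $0$, so your three-term estimate cannot close with constants $C_n+\Lambda$. The step you flag as ``the main obstacle'' is thus not a deferrable technicality with the mechanism you propose: it is precisely the content of the cited result, and the paper's own proof is nothing more than an appeal to \cite[Theorem 3.3]{Wang2018}.

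The reason the lemma is stated with $\nu$ rather than $\mu$ is exactly to avoid this difficulty, and any complete argument must use the infimum over scales. Since $\V_{g_0}(B_{g_0}(x_0,s))/s^n\to\omega_n$ as $s\to 0$, there is a smallest integer $k\ge 0$ with $\V_{g_0}(B_{g_0}(x_0,2^{-k}r))\le 3^n\,\V_{g_0}(B_{g_0}(x_0,2^{-k-1}r))$. At that scale your cutoff construction and the three elementary estimates work verbatim and give $\mu\big(B_{g_0}(x_0,2^{-k}r),g_0,(2^{-k}r)^2\big)\le\log\frac{\V_{g_0}(B_{g_0}(x_0,2^{-k}r))}{(2^{-k}r)^n}+C_n+\Lambda(2^{-k}r)^2$. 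The failure of doubling at all larger scales yields $\frac{\V_{g_0}(B_{g_0}(x_0,2^{-k}r))}{(2^{-k}r)^n}\le(2/3)^{nk}\frac{\V_{g_0}(B_{g_0}(x_0,r))}{r^n}$, and domain monotonicity of $\mu$ (test functions supported in the smaller ball are admissible for the larger one) together with $\nu(B_{g_0}(x_0,r),g_0,r^2)=\inf_{s\le r^2}\mu(B_{g_0}(x_0,r),g_0,s)\le\mu\big(B_{g_0}(x_0,r),g_0,(2^{-k}r)^2\big)$ and $\Lambda(2^{-k}r)^2\le\Lambda r^2$ (for $\Lambda\ge 0$; otherwise replace $\Lambda$ by its positive part) transfers the bound to $\nu$, which is the lemma. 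In short: your term-by-term estimates are fine once a good scale is selected, but the single-scale reduction plus the claimed second use of $\mathcal{R}\le\Lambda$ is a genuine gap, and the multi-scale selection is the missing idea.
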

\begin{proof}
This follows directly from \cite[Theorem 3.3]{Wang2018}.
\end{proof}

When $g(t)$ is a complete Ricci flow on $M$ with bounded curvature, it is proved by Wang \cite[Theorem 5.3]{Wang2018} building on the work of Chau-Tam-Yu \cite{ChauTamYu2011} (see also the work of Perelman \cite{Perelman2002}) that the entropy is monotone.  Together with Lemma~\ref{lma:Ric-to-Volratio} and Lemma~\ref{lma:entrop-back-Vol}, it morally says that the volume ratio is (almost) monotonic increasing.  We will need the following almost monotonicity of entropy of Wang \cite[Theorem 5.4]{Wang2018}.
\begin{thm}\label{thm:Wang-entropy} Let $(M^n,g(t)),t\in [0,T]$ be a complete Ricci flow with bounded curvature. Suppose $x_0\in M$ and $\a\geq 10^3n$ is a constant such that for all $x\in B_{g(t)}(x_0,\sqrt{t}),t\in (0,T]$, 
$$\Ric(x,t)\leq (n-1)\a t^{-1}.$$
Then for any $\tau\in (0,\a^2 T]$, we have
$$\nu\left(B_{g(T)}(x_0,8\a \sqrt{T}),g(T),\tau \right) \geq-\a^{-2}+ \nu\left(B_{g_0}(x_0,20\a \sqrt{T}),g(0),\tau+T \right).$$
\end{thm}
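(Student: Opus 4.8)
The plan is to run Perelman's conjugate-heat-flow monotonicity for the entropy functional $W$ in a localized form, arranging that the only loss is the advertised $\alpha^{-2}$. First I would prove the inequality scale by scale. Fix $s\in(0,\tau]$. Since $s+T\le\tau+T$, the definition of $\nu$ gives
\[
\mu\big(B_{g_0}(x_0,20\alpha\sqrt{T}),g(0),s+T\big)\ \ge\ \nu\big(B_{g_0}(x_0,20\alpha\sqrt{T}),g(0),\tau+T\big),
\]
so it is enough to show, for every $s\in(0,\tau]$, that
\[
\mu\big(B_{g(T)}(x_0,8\alpha\sqrt{T}),g(T),s\big)\ \ge\ -\alpha^{-2}+\mu\big(B_{g_0}(x_0,20\alpha\sqrt{T}),g(0),s+T\big),
\]
and then take the infimum over $s$ on the left.

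Fix such an $s$ and let $u_s\in D_{g(T)}\big(B_{g(T)}(x_0,8\alpha\sqrt{T})\big)$ be a minimizer (a near-minimizer would also do) of $W\big(B_{g(T)}(x_0,8\alpha\sqrt{T}),g(T),\cdot,s\big)$. Extend $u_s^2$ by zero and let $v$ solve the conjugate heat equation $(-\partial_t-\Delta_{g(t)}+\mathcal{R})v=0$ on $M\times[0,T]$ with terminal value $v(\cdot,T)=u_s^2$; then $v>0$ for $t<T$, and the bounded-curvature hypothesis together with the Gaussian estimates below gives $\int_M v\,d\mathrm{vol}_{g(t)}\equiv 1$. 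Put $\tilde\tau(t):=s+T-t$, $u:=\sqrt{v}$, and write $v=(4\pi\tilde\tau)^{-n/2}e^{-f}$. Perelman's pointwise identity for $\tfrac{d}{dt}W$ along this flow, integrated over $M\times[0,T]$ (legitimate once $v$ and $\nabla f$ decay fast enough, which the Gaussian bounds supply), yields the \emph{global} monotonicity
\[
W\big(M,g(T),u(\cdot,T),s\big)\ \ge\ W\big(M,g(0),u(\cdot,0),s+T\big),
\]
the difference being $\int_0^T\!\!\int_M 2\tilde\tau\,\big|\Ric+\nabla^2 f-\tfrac1{2\tilde\tau}g\big|^2 v\ge 0$, which I discard. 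Since $\mathrm{supp}\,u_s\subset B_{g(T)}(x_0,8\alpha\sqrt{T})$ and $v(\cdot,T)=u_s^2$, the left-hand side equals $\mu\big(B_{g(T)}(x_0,8\alpha\sqrt{T}),g(T),s\big)$, so the problem is reduced to bounding $W\big(M,g(0),u(\cdot,0),s+T\big)$ below by $\mu\big(B_{g_0}(x_0,20\alpha\sqrt{T}),g(0),s+T\big)-\alpha^{-2}$.

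For this I would first establish a concentration estimate for $v(\cdot,0)$: using a Gaussian upper bound for the conjugate heat kernel $G(\cdot,T;\cdot,0)$ (in the spirit of Proposition~\ref{prop: heat Kernel}) together with the Hamilton--Perelman distance distortion coming from $\Ric\le(n-1)\alpha t^{-1}$ on $B_{g(t)}(x_0,\sqrt{t})$, one shows that for $R$ at least a suitable multiple of $\alpha\sqrt{T}$,
\[
\int_{M\setminus B_{g_0}(x_0,R)} v(\cdot,0)\,\big(1+|\nabla\log v|^2+|\log v|\big)\,d\mathrm{vol}_{g_0}\ \le\ \exp\!\big(-c_n R^2/T\big),
\]
so that at $R=20\alpha\sqrt{T}$ all of these tails are $\le\exp(-c_n\alpha^2)$; this is exactly where the gap between the radii $8\alpha\sqrt{T}$ and $20\alpha\sqrt{T}$ and the constraint $\alpha\ge 10^3n$ are used. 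Then pick a cutoff $\phi$ with $\phi\equiv1$ on $B_{g_0}(x_0,10\alpha\sqrt{T})$, $\mathrm{supp}\,\phi\subset B_{g_0}(x_0,20\alpha\sqrt{T})$, and $|\nabla\phi|\le C(\alpha\sqrt{T})^{-1}$, and set $w:=\phi\,u(\cdot,0)/\|\phi\,u(\cdot,0)\|_{L^2}$, which is admissible for $\mu\big(B_{g_0}(x_0,20\alpha\sqrt{T}),g(0),s+T\big)$. Expanding $W\big(B_{g_0}(x_0,20\alpha\sqrt{T}),g(0),w,s+T\big)$ and comparing it term by term with $W\big(M,g(0),u(\cdot,0),s+T\big)$ (the additive constants $-\tfrac n2\log(4\pi(s+T))-n$ cancel since the scales agree), each discrepancy --- the renormalization $\|\phi\,u(\cdot,0)\|_{L^2}^2=1-O(e^{-c_n\alpha^2})$, the cutoff-gradient term $4(s+T)\int|\nabla\phi|^2 v$ (in which $s+T\le(\alpha^2+1)T$ cancels $|\nabla\phi|^2\le C\alpha^{-2}T^{-1}$, leaving a tail), the term $(s+T)\int\mathcal{R}(1-\phi^2)v$, and the $-2w^2\log w$ tails --- is controlled by $C(n)$ times a polynomial in $\alpha$ times the tails above. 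Since $e^{-c_n\alpha^2}$ dominates every polynomial in $\alpha$, the total loss is $\le\alpha^{-2}$ once $\alpha\ge 10^3n$, which completes the chain of inequalities.

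I expect the genuine obstacle to be the concentration estimate with \emph{$\alpha$-only} constants. The only fast-scale curvature information is a one-sided Ricci bound on balls of radius $\sqrt{t}$ about $x_0$, so a careless heat-kernel Gaussian bound imports a constant depending on the qualitative bound $\sup_{M\times[0,T]}|\Rm|$ used merely to run Ricci flow --- and this must not appear in the conclusion; the scalar-curvature term $(s+T)\int\mathcal{R}(1-\phi^2)v$ is the sharpest test of this, since on the tail one only has a global bound on $\mathcal{R}$. Producing an $\alpha$-only tail, and the accompanying distance-distortion bookkeeping that forces the precise gap between $8\alpha\sqrt{T}$ and $20\alpha\sqrt{T}$, appears to need either a pseudolocality/reduced-length argument for the conjugate heat kernel or a dyadic-in-time iteration in the style of the proof of Proposition~\ref{prop: heat Kernel}, tracking only the local Ricci upper bound. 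By comparison, justifying the global $W$-monotonicity on the complete non-compact $M$ and controlling the $\int v|\log v|$ tail (which requires two-sided Gaussian control of $v$, not merely an upper bound) are routine but must be done carefully.
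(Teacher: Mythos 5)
The paper does not prove this statement at all: it is quoted verbatim from Wang \cite[Theorem 5.4]{Wang2018}, so the only meaningful comparison is with Wang's argument. Your overall architecture --- take a (near-)minimizer at time $T$ supported in $B_{g(T)}(x_0,8\a\sqrt T)$, evolve its square backward by the conjugate heat equation, invoke Perelman's global $W$-monotonicity, then cut off at time $0$ inside $B_{g_0}(x_0,20\a\sqrt T)$ and compare term by term --- is indeed the known strategy. The problem is that, as a proof, everything quantitative has been loaded into the asserted concentration estimate
$\int_{M\setminus B_{g_0}(x_0,R)} v(\cdot,0)\bigl(1+|\nabla\log v|^2+|\log v|\bigr)\,d\mathrm{vol}_{g_0}\le \exp(-c_nR^2/T)$,
which you do not prove and which does not follow from the tools you point to. Proposition~\ref{prop: heat Kernel} and its relatives require a two-sided bound $|\Rm(g(t))|\le \a t^{-1}$ on all of $M$; here the hypotheses give only a one-sided Ricci upper bound on the balls $B_{g(t)}(x_0,\sqrt t)$, together with a qualitative bounded-curvature assumption whose constant must not appear in the conclusion. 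Extracting an $\a$-only tail from the one-sided local bound (via Perelman's distance-distortion lemma and a carefully built space-time barrier/cutoff) is precisely the content of Wang's proof, not a routine step that can be deferred; your own closing paragraph concedes this, which means the proposal reduces the theorem to an unproved estimate carrying essentially all of its difficulty.

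There is also a concrete flaw in the final bookkeeping even granting the tail bound. The discrepancy in the scalar-curvature term is $(s+T)\int \mathcal{R}\,(u(\cdot,0)^2-w^2)\,d\mathrm{vol}_{g_0}$, and on the region where the cutoff is active the only available control on $\mathcal{R}(g_0)$ is the qualitative global bound $K=\sup_{M\times[0,T]}|\Rm|$; a bound of the form $C(n)\cdot\mathrm{poly}(\a)\cdot e^{-c_n\a^2}$ is therefore not valid uniformly in $K$, and constants depending only on $n$ and $\a$ cannot absorb an arbitrary $K$ unless the tail estimate itself is made to beat $K$ or the argument is reorganized (as in \cite{Wang2018}) so that no $K$-weighted tail term ever appears. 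The same objection applies to the $v|\log v|$ tail (which needs two-sided, not just upper, Gaussian control of $v$) and to the $|\nabla\log v|^2 v$ tail (which needs a gradient estimate for the conjugate heat solution that an upper heat-kernel bound alone does not supply). So the proposal identifies the right strategy but is not a proof; the honest options are to cite \cite[Theorem 5.4]{Wang2018} as the paper does, or to reproduce Wang's cutoff and distance-distortion argument in full.
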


\section{a-priori estimates of bounded curvature Ricci flow}
In this section, we will consider \textit{complete bounded curvature} Ricci flow and derive local curvature estimates. Our goal is to prove the following pseudo-locality type Theorem. This in particular proves the compact case in Theorem~\ref{Thm:RF-existence}.

\begin{thm}\label{thm:pseudo-locality} 
    For any $n\geq 3$, there exist $\e_0(n),\a_n, S_n,L_n>0$ such that the following holds: 
   Suppose $(M,h_0)$ 
   is a complete manifold with bounded curvature. If   for some $r>0$, the initial metric $h_0$ satisfies 
    \begin{enumerate}
        \item[(a)] for some $0<\Lambda_0<1$, either 
       \begin{enumerate}
           \item[(i)] $\mathrm{Rm}(h_0)+\Lambda_0 r^{-2}h_0\owedge h_0/2\in \mathrm{C}_{\mathrm{PIC1}}$ if $n\geq 4$; or
           \item[(ii)] $K(h_0)+\Lambda_0 r^{-2}\geq 0$ if $n=3$,
       \end{enumerate} 
       \item[(b)] for all $x\in M$, 
       $$ \int^{r}_0 s\left(\fint_{B_{g_0}(x,r)}|\Rm(h_0)|\, d\mathrm{vol}_{h_0} \right)\, ds<\e_0.$$
    \end{enumerate}
    Then the complete bounded curvature Ricci flow $h(t)$ on $M$ 
    starting with $h(0)=h_0$ exists up to $S_n \cdot (r^2\wedge \mathrm{diam}(M,h_0)^2)$ and satisfies 
    \begin{enumerate}
        \item[(I)] $\sup_N |\Rm(h(t))|\leq \a_n t^{-1}$;
        \item [(II)] $\mathrm{Rm}(h(t))+L_n\Lambda_0  r^{-2}h(t)\owedge h(t)/2 \in \mathrm{C}_{\mathrm{PIC1}}$ if $n\geq 4$;
        \item [(III)] $K(h(t))+L_n \Lambda_0 r^{-2}\geq 0$ if $n=3$.
    \end{enumerate}
    If $M$ is complete non-compact, $\mathrm{diam}(M,h_0)$ is understood to be $+\infty$. 
\end{thm}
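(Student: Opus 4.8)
\emph{Strategy.} This is a continuity (bootstrap) argument along the Ricci flow, driven by the heat-kernel and entropy estimates established above. By parabolic rescaling we may assume $r=1$; all hypotheses and conclusions, including the value $S_n(r^2\wedge\mathrm{diam}(M,h_0)^2)$, are scale invariant. Since $(M,h_0)$ is complete with bounded curvature, Shi's theorem produces a complete bounded-curvature solution $h(t)$ with $h(0)=h_0$ on a maximal interval $[0,T_{\max})$, with $\sup_M|\Rm(h(t))|<\infty$ on each $[0,T]\subset[0,T_{\max})$ and $t\sup_M|\Rm(h(t))|\to 0$ as $t\to 0^+$. Let $T^\ast$ be the supremum of those $T\in(0,T_{\max})\cap(0,S_n(1\wedge\mathrm{diam}(M,h_0)^2)]$ for which, on $[0,T)$: (I$'$) $\sup_M|\Rm(h(t))|\le\a_n t^{-1}$; and (II$'$)/(III$'$) the tensor $\Rm(h(t))+L_n\Lambda_0\,h(t)\owedge h(t)/2$ lies in $\mathrm{C}_{\mathrm{PIC1}}$ when $n\ge 4$, respectively $K(h(t))+L_n\Lambda_0\ge 0$ when $n=3$. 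Because $|\Rm(h_0)|<\infty$ and the curvature conditions are open and continuous in $t$, we have $T^\ast>0$. The plan is to show that on $[0,T^\ast)$ these hold in fact with $\tfrac12\a_n$ and $\tfrac12 L_n\Lambda_0$; together with Shi's continuation criterion (applicable because (I$'$) bounds $|\Rm(h(t))|$ on $[\tfrac12 T^\ast,T^\ast)$) and the openness of the conditions, this forces $T^\ast=S_n(1\wedge\mathrm{diam}(M,h_0)^2)$, and a final application of Shi's estimate extends the flow smoothly to the closed interval with (I)--(III) intact.

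\emph{Preservation of the curvature condition.} On $[0,T^\ast)$ the flow has bounded curvature, so Hamilton's maximum principle for reaction--diffusion systems applies in the Uhlenbeck gauge. Using that $\mathrm{C}_{\mathrm{PIC1}}$ is a closed convex $O(n)$-invariant cone invariant under the Hamilton ODE $\dot R=R^2+R^{\#}$ (Wilking; Brendle, see \cite{BrendleBook}), and that $\tfrac12 g\owedge g$ corresponds to the identity on $\Lambda^2$ and lies in the interior of $\mathrm{C}_{\mathrm{PIC1}}$, one runs the maximum principle on $\Rm(h(t))+\phi(t)\,h(t)\owedge h(t)/2$ with $\phi(0)=\Lambda_0$, where $\phi$ satisfies a suitable differential inequality, modelled on the shrinking round sphere together with the cross terms produced by $R^{\#}$ (the latter estimated using (I$'$)), designed so that the evolving tensor remains in $\mathrm{C}_{\mathrm{PIC1}}$; an ODE comparison then yields $\phi(t)\le\tfrac12 L_n\Lambda_0$ on $[0,S_n]$ provided $S_n$ is small and $L_n$ large, both depending only on $n$. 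The case $n=3$ is handled identically with $\mathrm{C}_{\mathrm{PIC1}}$ replaced by $\{K\ge 0\}$.

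\emph{The curvature estimate.} This is the heart of the matter, and where the hypothesis on $k(\cdot,r)$ is used. On $[0,T^\ast)$ the bound (I$'$) holds, so Proposition~\ref{prop: heat Kernel} furnishes the Gaussian upper bound for the heat kernel $G$ of $\partial_t-\Delta_{g(t)}-\mathcal R$ with \emph{no} non-collapsing hypothesis, while (II$'$)/(III$'$) gives $\Ric(h(t))\ge-c_n\Lambda_0$ and $\mathcal R(h(t))\ge-c_n\Lambda_0$. Since $|\Rm|$ satisfies $(\partial_t-\Delta)|\Rm|\le c_n|\Rm|^2$, one represents $|\Rm(h(t))|(x)$ against the coupled heat kernel, controls the reaction term via (I$'$), and bounds $|\Rm(h(t))|(x)$ by a heat-kernel-weighted average of $|\Rm|$ at earlier times; then one peels off the flow on the dyadic times $t,t/2,t/4,\dots$, and at each stage uses the Gaussian decay of $G$, the Hamilton--Perelman distance distortion estimate \cite{Perelman2002} to pass between $h(s)$- and $h_0$-balls, and Bishop--Gromov (from $\Ric\ge-c_n\Lambda_0$) to sum the annular contributions. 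The volumes $\mathrm{Vol}_{h(t)}(B_{h(t)}(x,\sqrt t))$ in the denominators are bounded below by $c_n\,\mathrm{Vol}_{h_0}(B_{h_0}(x,c_n\sqrt t))$ using Wang's almost-monotonicity of the local entropy (Theorem~\ref{thm:Wang-entropy}) together with Lemmas~\ref{lma:Ric-to-Volratio} and~\ref{lma:entrop-back-Vol} and the bounds $\mathcal R(h(t))\le\a_n/t$, $\Ric\ge-c_n\Lambda_0$ --- precisely the step that dispenses with non-collapsing. Carrying this out one arrives at an estimate of the shape
\[
t\,|\Rm(h(t))|(x)\ \le\ C_n\Big(\int_0^{C_n\sqrt t}s\,\fint_{B_{h_0}(x,s)}|\Rm(h_0)|\,d\mathrm{vol}_{h_0}\,ds\ +\ \Lambda_0 t\Big).
\]
For $t\le S_n(1\wedge\mathrm{diam}(M,h_0)^2)$ with $S_n$ chosen so that $C_n\sqrt t\le 1$, hypothesis (b) makes the integral $<\e_0$ and $\Lambda_0 t\le S_n$, so $t|\Rm(h(t))|(x)\le C_n(\e_0+S_n)\le\tfrac12\a_n$ once $\e_0$ and $S_n$ are small enough in terms of $n$; this is the desired improvement of (I$'$) and closes the bootstrap.

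\emph{Main obstacle.} The difficulty lies entirely in the displayed curvature estimate. The substantive points are: (i) to produce the \emph{Dini integral} $\int_0^{C_n\sqrt t}s\,\fint_{B_{h_0}(x,s)}|\Rm(h_0)|\,ds$ rather than the pointwise quantity $s^2\fint_{B_{h_0}(x,s)}|\Rm(h_0)|$, which requires organizing the contributions of the successive dyadic time scales so that the (genuinely $\sim s^{-1}$-singular) reaction terms accumulate \emph{additively} into that integral instead of compounding; and (ii) to compare the evolving volumes with the initial ones with no non-collapsing assumption, which is exactly what forces the use of Wang's local entropy and its almost-monotonicity. A secondary delicate point is the simultaneous preservation of the $\mathrm{C}_{\mathrm{PIC1}}$ condition while keeping the coefficient of the sphere term linear in $\Lambda_0$.
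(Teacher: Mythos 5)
Your scaffolding (rescale, run the bounded-curvature flow, bootstrap on the maximal time on which (I$'$) and (II$'$)/(III$'$) hold, and use Proposition~\ref{prop: heat Kernel} upgraded by Wang's entropy as in Lemma~\ref{lma:improved-heat} to avoid any non-collapsing hypothesis) matches the paper, but the two steps that actually close the bootstrap are asserted rather than proved, and the mechanisms you propose for them cannot give constants depending only on $n$. For the curvature bound you Duhamel-represent $|\Rm|$ and control the reaction $c_n|\Rm|^2$ by the bootstrap bound $|\Rm|\le \alpha_n t^{-1}$, where $\alpha_n$ is a fixed dimensional constant (it must be: it appears in the conclusion and in the heat-kernel hypotheses); then every dyadic time scale contributes a factor of size $\sim c_n\alpha_n$, i.e.\ the singular reaction compounds multiplicatively ($\int_0^t s^{-1}ds$ diverges), and no smallness of $\e_0$ repairs this since $\e_0$ only controls the initial-data term. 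You flag exactly this (``additive rather than compounding accumulation'') as the main obstacle, but that obstacle \emph{is} the missing proof. The paper never faces a quadratic reaction: under the (almost) $\mathrm{C}_{\mathrm{PIC1}}$ condition, Lemma~\ref{lma:R-improve} gives $\Ric\le\frac12\mathcal{R}\,g$, so the \emph{twisted scalar curvature} $\tilde{\mathcal{R}}$ of $\Rm(h(t))+h(t)\owedge h(t)$ satisfies the \emph{linear} inequality $\rheat\tilde{\mathcal{R}}\le \mathcal{R}\tilde{\mathcal{R}}+2n(n-1)\tilde{\mathcal{R}}$, whose potential $\mathcal{R}$ is precisely the potential of the heat kernel in Proposition~\ref{prop: heat Kernel}; representing $e^{-2n(n-1)t}\tilde{\mathcal{R}}$ against the Dirichlet kernel on $B_{h_0}(x_0,\tfrac12 r_0)$ (with the boundary contribution handled by the localized maximum principle of \cite{LeeTam2022}) is what produces the Dini integral $f(x,r_0)\le\e_0$ as the leading term and yields $\tilde{\mathcal{R}}\lesssim t^{-1}$; the full bound (I) then follows from $|\mathrm{R}|\le\beta_n\,\mathrm{scal}(\mathrm{R})$ on $\mathrm{C}_{\mathrm{PIC1}}$. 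So the PIC1 hypothesis enters the curvature estimate structurally, not merely through a Ricci lower bound as in your sketch; without this reduction to a scalar quantity with potential $\mathcal{R}$, your displayed estimate is unjustified.

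The cone-preservation step has the same defect. A Hamilton ODE comparison for $\Rm+\phi(t)\,h\owedge h/2$ in which the $R^{\#}$ cross terms are estimated via (I$'$) forces $\dot\phi\gtrsim \alpha_n t^{-1}\phi$, which cannot be integrated from $t=0$ with dimensional constants (only with constants depending on the non-quantitative curvature bound of $h_0$), so the linear-in-$\Lambda_0$ output $\phi\le L_n\Lambda_0$ does not follow. The paper instead uses the evolution inequality of \cite{BamlerCabezasWilking2019} for the scalar defect function $\phi(x,t)=\inf\{s>0:\Rm+\tfrac12 s\,h\owedge h\in\mathrm{C}_{\mathrm{PIC1}}\}$, namely $\rheat\phi\le\mathcal{R}\phi+c_n\phi^2$, linearizes via $\phi\le1$, and again represents $e^{-c_nt}\phi$ against the same potential-$\mathcal{R}$ heat kernel with the collapsing-free Gaussian bound; the improvement is regained not from smallness of time but from spatial averaging after normalizing $r_0=L(n)$ to be a \emph{large} dimensional constant, so that $\phi(\cdot,0)\le\Lambda_0L^{-2}$ and the kernel average is at most $C_3\Lambda_0L^{-2}\le\tfrac14\Lambda_0$ upon choosing $L=2\sqrt{C_3}$. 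In short, both halves of your bootstrap need the nonlocal heat-kernel representation with scalar-curvature potential (plus Lemma~\ref{lma:R-improve} and the scalar-controls-curvature property of $\mathrm{C}_{\mathrm{PIC1}}$), and these ideas are absent from the proposal; the remaining ingredients you cite (Lemma~\ref{lma:compare-k-f}, Lemma~\ref{lma:improved-heat}, Theorem~\ref{thm:Wang-entropy}, distance distortion, and the role of $\mathrm{diam}(M,h_0)^2$ in keeping the entropy balls from covering a compact $M$) are used correctly but do not fill that gap.
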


For notation convenience, we denote
\begin{equation}
\left\{
\begin{array}{ll}
&k_{h_0}(x,r)=\displaystyle r^2\fint_{B_{h_0}(x,r)}|\Rm(h_0)|\, d\mathrm{vol}_{h_0};\\
&f_{h_0}(x,r)=\displaystyle\int^r_0 s^{-1}k_{h_0}(x,s)ds.
\end{array}
\right.
\end{equation}
The notation is  scaling invariant in the sense that $k_{\lambda^2 h_0}(\lambda r)=k_{h_0}(r)$ and $f_{\lambda^2h_0}(\lambda r)=f_{h_0}(r)$ for $\lambda>0$. We will omit the index $h_0$ if the content is clear.

We first observe that the bound on $f(x,r)$ will imply a bound on $k(x,r/2)$ by a simple comparison argument under assumption on Ricci lower bound. 
\begin{lma}\label{lma:compare-k-f}
For any $n\geq 3$, there exists $C_n>0$ such that the following holds: Suppose $\Ric(h_0)\geq -(n-1)$ on $M$, then for all $r\in (0,1]$ and $x\in M$, $$k(x,r)\leq C_n (f(x,2r)-f(x,r)).$$ 
\end{lma}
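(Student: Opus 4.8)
The plan is to exploit the almost-monotonicity of the volume ratio under a Ricci lower bound, together with the trivial monotonicity of the unnormalized curvature integral $\int_{B_{h_0}(x,s)}|\Rm(h_0)|\,d\mathrm{vol}_{h_0}$ in $s$. Write $I(x,s):=\int_{B_{h_0}(x,s)}|\Rm(h_0)|\,d\mathrm{vol}_{h_0}$, so that $k(x,s)=s^2 I(x,s)/\mathrm{Vol}_{h_0}(B_{h_0}(x,s))$ and $f(x,2r)-f(x,r)=\int_r^{2r}s^{-1}k(x,s)\,ds=\int_r^{2r} s\, I(x,s)/\mathrm{Vol}_{h_0}(B_{h_0}(x,s))\,ds$. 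Since $I(x,s)$ is nondecreasing in $s$, for every $s\in[r,2r]$ we have $I(x,s)\ge I(x,r)$, and hence
\[
f(x,2r)-f(x,r)\ \ge\ I(x,r)\int_r^{2r}\frac{s}{\mathrm{Vol}_{h_0}(B_{h_0}(x,s))}\,ds.
\]
The task therefore reduces to bounding $\int_r^{2r} s/\mathrm{Vol}_{h_0}(B_{h_0}(x,s))\,ds$ from below by a dimensional multiple of $r^4/\mathrm{Vol}_{h_0}(B_{h_0}(x,r))$, which is exactly a statement about how fast the volume of balls can grow.

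Next I would invoke Bishop–Gromov volume comparison. Under $\Ric(h_0)\ge -(n-1)$ and the restriction $r\in(0,1]$, the ratio $\mathrm{Vol}_{h_0}(B_{h_0}(x,s))/\mathrm{Vol}_{h_0}(B_{h_0}(x,r))$ is bounded above, for $r\le s\le 2r\le 2$, by $V_{-1}(s)/V_{-1}(r)$ where $V_{-1}$ is the volume of a ball in the model space of curvature $-1$; since $s\le 2$ this quotient is at most some dimensional constant $A_n$ (because $V_{-1}(s)\le V_{-1}(2)\le A_n' s^n$ and $V_{-1}(r)\ge c_n' r^n$ for $r\le 2$). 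Consequently $\mathrm{Vol}_{h_0}(B_{h_0}(x,s))\le A_n\,\mathrm{Vol}_{h_0}(B_{h_0}(x,r))$ for all $s\in[r,2r]$, and therefore
\[
\int_r^{2r}\frac{s}{\mathrm{Vol}_{h_0}(B_{h_0}(x,s))}\,ds\ \ge\ \frac{1}{A_n\,\mathrm{Vol}_{h_0}(B_{h_0}(x,r))}\int_r^{2r}s\,ds\ =\ \frac{3r^2}{2A_n\,\mathrm{Vol}_{h_0}(B_{h_0}(x,r))}.
\]
Combining with the previous display gives $f(x,2r)-f(x,r)\ge \frac{3r^2}{2A_n}\cdot \frac{I(x,r)}{\mathrm{Vol}_{h_0}(B_{h_0}(x,r))}=\frac{3}{2A_n}\cdot r^{-2}k(x,r)\ge \frac{3}{2A_n}k(x,r)$, using $r\le 1$; this is the claim with $C_n=2A_n/3$.

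The argument is essentially routine once the two ingredients — monotonicity of the total curvature mass $I(x,s)$ and the Bishop–Gromov upper bound on volume growth — are in place. The only point requiring a little care is the use of $r\in(0,1]$ to convert the hyperbolic-model volume comparison into a genuinely \emph{dimensional} constant $A_n$ (independent of $r$): for large $r$ the model volume grows exponentially and no such uniform bound would hold, which is precisely why the hypothesis restricts to $r\le 1$ after normalizing $\Ric\ge-(n-1)$. There are no real obstacles beyond bookkeeping the dimensional constants.
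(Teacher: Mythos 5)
Your proof is correct and takes essentially the same route as the paper: both arguments rest on the monotonicity of $\int_{B_{h_0}(x,s)}|\Rm(h_0)|\,d\mathrm{vol}_{h_0}$ in $s$ together with Bishop--Gromov volume comparison under $\Ric(h_0)\geq -(n-1)$ for radii $s\le 2r\le 2$, the paper merely packaging them as the pointwise bound $k(x,r)\le C_n k(x,s)$ for $s\in[r,2r]$ before integrating against $s^{-1}\,ds$. (One trivial slip: the intermediate expression $\frac{3r^2}{2A_n}\cdot\frac{I(x,r)}{\mathrm{Vol}_{h_0}(B_{h_0}(x,r))}$ equals $\frac{3}{2A_n}k(x,r)$, not $\frac{3}{2A_n}r^{-2}k(x,r)$, so your final bound holds without even invoking $r\le 1$ at that step.)
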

\begin{proof}
   By volume comparison  for all $s\in [r,2r]$ and $0<r\leq 1$,
      \begin{equation}
        \begin{split}
            k(x_0,r) &=r^2 \fint_{B_{h_0}(x_0,r)} |\Rm(h_0)| \,d\mathrm{vol}_{h_0} \\
            &\leq \frac{r^2}{s^2}\frac{{\mathrm{Vol}_{h_0} (B_{h_0}(x_0,s))}}{\mathrm{Vol}_{h_0} (B_{h_0}(x_0,r))} s^2\fint_{B_{h_0}(x_0,s)} |\Rm(h_0)| \,d\mathrm{vol}_{h_0} \\
            &\leq  C_n \cdot k(x_0,s).
        \end{split}
    \end{equation}
Hence, 
\begin{equation}
    \begin{split}
        k(x_0,r) &\leq C_n\int^{2r}_r s^{-1}k(x_0,s)  ds \\
        &=  C_n \left( f(x_0,2r)-f(x_0,r)\right). 
    \end{split}
\end{equation}
\end{proof}

The key observation is a curvature inequality, which says that $\mathcal{R}\, g-2\Ric\ge 0$ under nonnegative sectional curvature condition (see, for instances, \cite[Proposition 5.4]{MunteanuSungWang2019} and \cite[Lemma 4.4]{Lai2022}).
We observe that a similar inequality still holds under $\mathrm{PIC}_1$ condition, this eventually leads to an improvement on the evolution inequality of scalar curvature under almost $\mathrm{PIC}_1$ condition.
\begin{lma}\label{lma:R-improve}
Suppose $\mathrm{R}\in \mathrm{C}$, i.e. the cone of  curvature type operator, such that 
\begin{enumerate}
    \item[(a)] $\mathrm{R}\in \mathrm{C}_{\mathrm{PIC1}}$ if $n\geq 4$;
    \item[(b)] $\mathrm{K}(\mathrm{R})\geq 0$ if $n\geq 3$,
\end{enumerate}
then $$\Ric(\mathrm{R}) \leq \frac12\mathrm{scal}(\mathrm{R})\cdot g.$$
\end{lma}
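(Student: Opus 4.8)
The plan is to prove the pointwise inequality $\Ric(\mathrm{R})\le \tfrac12\mathrm{scal}(\mathrm{R})\,g$ by testing it against an arbitrary unit vector $e_1$, i.e. to show $\mathrm{Ric}(e_1,e_1)\le \tfrac12\mathrm{scal}(\mathrm{R})$. Fix an orthonormal frame $\{e_1,\dots,e_n\}$ with $e_1$ the chosen unit vector, and write $\mathrm{Ric}_{11}=\sum_{i\ge 2}\mathrm{R}_{1ii1}$ and $\mathrm{scal}=\sum_{i,j}\mathrm{R}_{ijji}=2\,\mathrm{Ric}_{11}+\sum_{i,j\ge 2}\mathrm{R}_{ijji}$. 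So the claimed inequality is equivalent to $\sum_{i,j\ge 2}\mathrm{R}_{ijji}\ge 0$, i.e. the scalar curvature of $\mathrm{R}$ restricted to the orthogonal complement $e_1^\perp$ is non-negative. The natural mechanism is to average the non-negativity condition ($\mathrm{K}^{\mathbb{C}}\ge 0$ or PIC1) over the sections spanned by $e_1$ together with the coordinate directions, since those sections automatically have $\bar e_1\perp\Sigma$ and hence are PIC1 sections, so even the weaker $\mathrm{C}_{\mathrm{PIC1}}$ hypothesis applies.

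Concretely, for $n\ge 4$ I would use the four-frame characterization quoted in the introduction: for any orthonormal $\{e_i\}_{i=1}^4$ and $\lambda\in[0,1]$, $\mathrm{R}\in\mathrm{C}_{\mathrm{PIC1}}$ gives
\[
\mathrm{R}_{1331}+\lambda^2\mathrm{R}_{1441}+\mathrm{R}_{2332}+\lambda^2\mathrm{R}_{2442}+2\lambda\mathrm{R}_{1234}\ge 0.
\]
Taking $\lambda=1$ and also replacing $e_4$ by $-e_4$ (which flips the sign of the $\mathrm{R}_{1234}$ term) and averaging the two, the cross term cancels and one gets $\mathrm{R}_{1331}+\mathrm{R}_{1441}+\mathrm{R}_{2332}+\mathrm{R}_{2442}\ge 0$. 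Now in $e_1^\perp$, which has dimension $n-1\ge 3$, apply this with $e_1$ playing its fixed role and $\{e_2,e_3,e_4\}$ ranging over triples of an orthonormal basis of $e_1^\perp$; summing over all such triples (or a suitable symmetric choice) produces a positive multiple of $\sum_{i,j\ge 2}\mathrm{R}_{ijji}$ plus a positive multiple of $\mathrm{Ric}_{11}$-type terms that one arranges to have the right sign, yielding $\sum_{i,j\ge2}\mathrm{R}_{ijji}\ge 0$ after bookkeeping. For $n=3$ the statement is the classical fact that $\mathrm{K}\ge 0$ implies $\Ric\le\tfrac12\mathrm{scal}\,g$: here $e_1^\perp$ is $2$-dimensional and $\sum_{i,j\ge 2}\mathrm{R}_{ijji}=2\mathrm{R}_{2332}=2\mathrm{K}(e_2\wedge e_3)\ge 0$ directly.

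The main obstacle — really the only subtle point — is getting the combinatorial averaging right so that the unwanted terms (the mixed terms $\mathrm{R}_{1ij1}$ with $i\ne j$ and, in the PIC1 case, the $\mathrm{R}_{1ij k}$ cross terms) cancel or combine with a favorable sign, rather than contaminating the estimate. I would handle the cross terms by the sign-flip symmetrization described above (replacing a frame vector by its negative, which is an orthogonal change of frame, and averaging), and handle the off-diagonal plane terms $\mathrm{R}_{1ij1}$ by noting they are entries of the symmetric bilinear form $v\mapsto \mathrm{R}(e_1,v,e_1,\cdot)$ and summing over a basis diagonalizing it, so only the diagonal entries (which are sectional-type curvatures controlled by the hypothesis) survive; alternatively one can invoke the known references \cite[Proposition 5.4]{MunteanuSungWang2019}, \cite[Lemma 4.4]{Lai2022} for the sectional-curvature case and only supply the short extra argument showing the PIC1 inequality above suffices in place of $\mathrm{K}\ge0$. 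Once the averaging identity is in place the conclusion $\Ric(\mathrm{R})\le\tfrac12\mathrm{scal}(\mathrm{R})\,g$ is immediate since $e_1$ was arbitrary.
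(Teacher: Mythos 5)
Your reduction is fine and matches the paper's: checking $\Ric(\mathrm{R})\le\tfrac12\mathrm{scal}(\mathrm{R})\,g$ against a unit vector $e_1$ is exactly the statement $\sum_{i,j\ge 2}\mathrm{R}_{ijji}\ge 0$, i.e.\ non-negativity of the scalar curvature of $\mathrm{R}$ restricted to $e_1^\perp$ (the paper phrases this via the largest Ricci eigenvalue, which is the same computation), and no off-diagonal terms $\mathrm{R}_{1ij1}$ ever enter, so the diagonalization worry is moot; the $n=3$ case is also handled correctly. The genuine gap is in the PIC1 step for $n\ge 4$: by taking $\lambda=1$ and symmetrizing away the cross term you keep only the four-term inequality $\mathrm{R}_{1331}+\mathrm{R}_{1441}+\mathrm{R}_{2332}+\mathrm{R}_{2442}\ge 0$, which is strictly weaker than what PIC1 gives, and your proposed averaging uses 4-frames that \emph{contain} the fixed vector $e_1$. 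Each such inequality, for a frame $\{e_1,e_a,e_b,e_c\}$ with $a,b,c\ge 2$, reads $\mathrm{K}_{1b}+\mathrm{K}_{1c}+\mathrm{K}_{ab}+\mathrm{K}_{ac}\ge 0$, so any symmetric sum produces $\alpha\,\mathrm{Ric}_{11}+\beta\sum_{2\le i<j}\mathrm{K}_{ij}\ge 0$ with $\alpha,\beta>0$. This cannot be "arranged to have the right sign": PIC1 forces $\Ric\ge 0$ (sum the $\lambda=0$ inequality over pairs), so the $\mathrm{Ric}_{11}$ contribution enters with the unfavorable sign and the estimate only gives $\sum_{2\le i<j}\mathrm{K}_{ij}\ge-(\alpha/\beta)\mathrm{Ric}_{11}$, not $\ge 0$. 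One could instead put the whole 4-frame inside $e_1^\perp$, but that requires $n\ge 5$, so the case $n=4$ is not covered by your scheme; and indeed a tensor with $\mathrm{K}_{1j}=1$, $\mathrm{K}_{ij}=-\epsilon$ for $i,j\ge 2$ and vanishing other components satisfies all the coordinate-frame four-term inequalities for small $\epsilon$ yet violates the conclusion, showing the discarded information is really needed.

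The missing ingredient is precisely the $\lambda=0$ case of the PIC1 characterization, which you are allowed to use since the inequality holds for all $\lambda\in[0,1]$: for any orthonormal triple it gives $\mathrm{R}_{ijji}+\mathrm{R}_{ikki}\ge 0$, i.e.\ the sum of sectional curvatures of two orthogonal planes sharing a vector is non-negative. This two-term inequality involves only vectors inside the hyperplane $e_1^\perp$ (the fourth frame vector can live anywhere in the ambient space since $n\ge 4$), so one can pair up the terms of each "row sum" $\sum_{j}\mathrm{R}_{ijji}$ within $e_1^\perp$ cyclically and conclude each row sum, hence $\sum_{i,j\ge 2}\mathrm{R}_{ijji}$, is non-negative in all dimensions $n\ge 4$ at once; equivalently, the restriction of a PIC1 tensor to a hyperplane still has non-negative scalar curvature. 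This is exactly what the paper does. Your argument becomes correct once you replace the $\lambda=1$ symmetrization by this $\lambda=0$ consequence (or, for $n\ge 5$ only, by your four-term inequality applied to frames lying entirely in $e_1^\perp$).
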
 
\begin{proof}
Suppose $\lambda_i$ is the eigenvalues of $\Ric(\mathrm{R})$ with respect to $g$ so that $\lambda_n\geq \lambda_{n-1}\geq ...\geq \lambda_1$. It suffices to show that $\sum_{i=1}^n \lambda_i \geq 2\lambda_n$. 
\begin{equation}
    \begin{split}
        \sum_{i=1}^n \lambda_i -2\lambda_n&=\sum_{i=1}^{n-1} \lambda_i -\lambda_n\\
        &= \sum_{i=1}^{n-1}\sum_{j=1}^n R_{ijji}- \sum_{i=1}^{n-1} R_{inni}\\
        &=\sum_{i,j=1}^{n-1}R_{ijji}.
    \end{split}
\end{equation}
Since $\mathrm{R}\in \mathrm{C}_{\mathrm{PIC1}}$, $R_{ijji}+R_{ikki}\geq 0$ for all $i,j,k$ distinct. Without loss of generality, assume $i=1$ 
\begin{equation}
\begin{split}
   2 \sum_{j=2}^{n-1}R_{ijji}&= \left( \sum_{j=2}^{n-2} (R_{1jj1}+R_{1 (j+1)(j+1)1})\right)+R_{1(n-1)(n-1)1}+R_{1221} \\
&\geq 0.
\end{split}
\end{equation}
Replacing $i=1$ by $i\in \{2,...,n-1\}$, we see that the sum is non-negative. Alternatively, $\sum_{i,j=1}^{n-1}R_{ijji}$ is the scalar curvature of $\mathrm{R}$ restricted on the orthogonal sub-space of $e_{n}$ which is non-negative using  $\mathrm{R}\in \mathrm{C}_{\mathrm{PIC1}}$. The case of $n=3$ follows directly from non-negativity of sectional curvature. 
\end{proof}

Next, we will make use of Wang's almost monotonicity of entropy to improve the heat kernel estimate in Proposition~\ref{prop: heat Kernel}. This is motivated by the relative volume comparison proved in \cite{TianZhang2021}.
\begin{lma}\label{lma:improved-heat}
For any $n,\a>0$, there exists $C_1(n,\a)>0$ such that the following holds: Suppose $(M^n,h(t))$ 
is a complete solution to the Ricci flow on $M\times [0,T]$ 
with $h(0)=h_0$ and satisfies 
\begin{equation}
    \left\{
    \begin{array}{ll}
     & |\Rm(h(t))|\leq \a t^{-1} \text{  for some  } \a>10^3n;\\
      &  \Ric(h_0) \geq -(n-1).
    \end{array}
    \right.
\end{equation}
Then for $0< t\le \min\{T,(20\a)^{-2}, (10^3(n-1)^2 \a^2)^{-1} \mathrm{diam}(M,h_0)^2\}$ and $x,y\in B_{h_0}(x_0,r)$,
\begin{equation}\label{new-G est eqn}
 G(x,t;y,0)\le \frac{C_1}{\mathrm{Vol}_{h_0}\left(B_{h_0}(x,\sqrt{t})\right)}\exp\left(-\frac{d_{h_0}^2(x,y)}{C_1t}\right),
\end{equation}
where $G$ denotes the dirichlet heat kernel on $\Omega=B_{h_0}(x_0,2r)$. The same also holds for heat kernel on $M$. 
If $M$ is non-compact, $\mathrm{diam}(M,h_0)$ is understood to be $+\infty$. 
\end{lma}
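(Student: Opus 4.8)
The plan is to upgrade the heat kernel bound of Proposition~\ref{prop: heat Kernel} by replacing the evolving volume $\mathrm{Vol}_{g(t)}(B_{g(t)}(x,\sqrt t))$ in the denominator by the initial volume $\mathrm{Vol}_{h_0}(B_{h_0}(x,\sqrt t))$, at the cost of a constant depending only on $n$ and $\a$. The bridge between the two is precisely the entropy: Lemma~\ref{lma:entrop-back-Vol} converts an entropy lower bound at time $t$ into a volume ratio lower bound at time $t$ (using the scalar curvature upper bound $\mathcal{R}(h(t))\le n(n-1)\a t^{-1}$, whose scale-invariant contribution $\Lambda r^2$ is bounded), and Lemma~\ref{lma:Ric-to-Volratio} converts the initial volume ratio into an entropy lower bound at time $0$ (using $\Ric(h_0)\ge -(n-1)$). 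Theorem~\ref{thm:Wang-entropy} (Wang's almost monotonicity) links the two entropies, provided the Ricci upper bound hypothesis $\Ric(x,t)\le (n-1)\a t^{-1}$ holds on the relevant parabolic neighbourhood, which follows from $|\Rm(h(t))|\le \a t^{-1}$ after possibly enlarging $\a$ to exceed $10^3 n$.

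First I would fix $x$ and a scale $\rho=\sqrt t$, and restrict to $0<t\le \min\{T,(20\a)^{-2},(10^3(n-1)^2\a^2)^{-1}\mathrm{diam}(M,h_0)^2\}$ so that the time interval $[0,t]$ and the balls $B_{g(t)}(x,8\a\sqrt t)$, $B_{h_0}(x,20\a\sqrt t)$ are within the ranges demanded by Theorem~\ref{thm:Wang-entropy} and the diameter constraint ensures the various geodesic balls have nonempty boundary (needed in Lemmas~\ref{lma:Ric-to-Volratio} and \ref{lma:entrop-back-Vol}). Chaining the three inputs: apply Lemma~\ref{lma:entrop-back-Vol} on $B_{g(t)}(x,\sqrt t)$ with $\Lambda\le n(n-1)\a t^{-1}$, so $\Lambda\cdot(\sqrt t)^2$ is bounded by $n(n-1)\a$; then $\nu(B_{g(t)}(x,\sqrt t),g(t),t)\ge \nu(B_{g(t)}(x,8\a\sqrt t),g(t),t)$ (monotonicity of $\nu$ in the domain), apply Theorem~\ref{thm:Wang-entropy} with $T$ replaced by $t$ and $\tau=t\in(0,\a^2 t]$ to get a lower bound by $-\a^{-2}+\nu(B_{h_0}(x,20\a\sqrt t),h_0,2t)$; finally Lemma~\ref{lma:Ric-to-Volratio} applied on $B_{h_0}(x,\sqrt t)$ (after adjusting radii; $\Ric(h_0)\ge-(n-1)$ rescales to $\Ric\ge -(\sqrt t)^{-2}$ since $t\le 1$, which holds as $t\le(20\a)^{-2}$) bounds $\nu(B_{h_0}(x,\sqrt t),h_0,t)$ below by $\log(\mathrm{Vol}_{h_0}(B_{h_0}(x,\sqrt t))/t^{n/2})-C_n$. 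Some care is needed because the domains and the scale parameters $\tau$ in the three $\nu$'s are not literally the same, so I would first note that $\nu$ is monotone decreasing in the domain and that enlarging the ball only lowers $\nu$, then absorb the discrepancy between $B_{h_0}(x,\sqrt t)$ and $B_{h_0}(x,20\a\sqrt t)$ and between $\tau=t$ and $\tau=2t$ into dimensional/$\a$-dependent constants via volume comparison with $\Ric(h_0)\ge -(n-1)$. The outcome is
\begin{equation*}
\mathrm{Vol}_{g(t)}(B_{g(t)}(x,\sqrt t))\ge c(n,\a)\,\mathrm{Vol}_{h_0}(B_{h_0}(x,\sqrt t)).
\end{equation*}

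Substituting this into \eqref{G est eqn} from Proposition~\ref{prop: heat Kernel} immediately gives \eqref{new-G est eqn}, since $d_{g(s)}(x,y)$ in the Gaussian exponent can be compared to $d_{h_0}(x,y)$ via the Hamilton--Perelman distance distortion estimate (the distortion over $[0,t]$ is $O(\sqrt{\a t})$, comparable to $\sqrt t$, so it only changes the constant $C_1$); alternatively one re-runs the proof of Proposition~\ref{prop: heat Kernel} with the improved volume lower bound inserted at the last line. The statement for the heat kernel on $M$ is identical since both Proposition~\ref{prop: heat Kernel} and Wang's entropy monotonicity apply to the global heat kernel of a complete bounded curvature Ricci flow, and the non-compact convention $\mathrm{diam}=+\infty$ just removes the third term in the $\min$.

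The main obstacle I anticipate is purely bookkeeping rather than conceptual: carefully matching the geodesic-ball radii and the scale parameters $\tau$ across Lemma~\ref{lma:entrop-back-Vol}, Theorem~\ref{thm:Wang-entropy}, and Lemma~\ref{lma:Ric-to-Volratio}, and verifying in each application that the relevant ball has nonempty boundary and lies inside $M$ (this is where the quantitative upper bound on $t$ in terms of $\mathrm{diam}(M,h_0)^2$ is used — if the whole manifold is small compared to $\sqrt t$ the boundary hypotheses fail) and that the scalar-curvature and Ricci bounds translate correctly under the implicit parabolic rescaling $t\mapsto 1$. Keeping track of which constant depends on what ($n$ alone versus $n$ and $\a$) while chaining the inequalities is the part most prone to error, but it is routine once the logical skeleton above is in place.
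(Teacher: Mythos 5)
Your proposal is correct and follows essentially the same route as the paper: chain Lemma~\ref{lma:entrop-back-Vol} at time $t$, Wang's almost monotonicity (Theorem~\ref{thm:Wang-entropy}) and Lemma~\ref{lma:Ric-to-Volratio} at time $0$ to obtain $\mathrm{Vol}_{h(t)}\left(B_{h(t)}(x,\sqrt t)\right)\ge c(n,\a)\,\mathrm{Vol}_{h_0}\left(B_{h_0}(x,\sqrt t)\right)$, and then insert this into Proposition~\ref{prop: heat Kernel} (where the Gaussian already involves $d_{g(s)}=d_{h_0}$ since $s=0$). The only bookkeeping points, both handled in the paper and essentially anticipated by your remarks, are that Lemma~\ref{lma:Ric-to-Volratio} must be applied at the larger radius $20\a\sqrt t$ with $\tau=(20\a)^2 t$ (since enlarging the domain only lowers $\nu$, a lower bound for $\nu(B_{h_0}(x,\sqrt t),h_0,t)$ would go the wrong way, and the factor $(20\a)^n$ is absorbed into the constant because the larger ball has at least the volume of the smaller one), and that in the compact case the nonempty-boundary hypothesis for $B_{h(t)}(x,\sqrt t)$ requires the diameter distortion estimate $\mathrm{diam}(M,h_0)\le \mathrm{diam}(M,h(t))+8(n-1)\sqrt{\a t}$, which is precisely where the third term in the $\min$ is used.
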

\begin{proof}
We first focus on the case when $M$ is complete non-compact. 
We apply Lemma~\ref{lma:entrop-back-Vol}, monotonicity of entropy \cite[Proposition 2.1]{Wang2018} and Theorem~\ref{thm:Wang-entropy} to $h(t)$ so that for each $t\in (0,T\wedge (20\a)^{-2}]$,
    \begin{equation}\label{vol-to-ent}
    \begin{split}
        \log \frac{ \mathrm{Vol}_{h(t)}\left(B_{h(t)}(x,\sqrt{t})\right)}{t^{n/2}}
       &\geq \nu\left(B_{h(t)}(x_0,\sqrt{t}), h(t),t\right)-C_n-\a\\
       &\geq \nu\left(B_{h(t)}(x_0,8\a \sqrt{t}), h(t),t\right)-C_n-\a\\
       &\geq \nu\left(B_{h_0}(x_0,20\a \sqrt{t}), h_0,2t\right)-C_n'-\a.
    \end{split}
    \end{equation}

Furthermore,  Lemma~\ref{lma:Ric-to-Volratio} implies
\begin{equation}\label{ent-to-vol}
    \begin{split}
        \nu\left(B_{h_0}(x_0,20\a \sqrt{t}), h_0,2t\right)
        &\geq  \nu\left(B_{h_0}(x_0,20\a \sqrt{t}), h_0,20^2\a^2t\right)\\
        &\geq \log \frac{ \mathrm{Vol}_{h_0}\left(B_{h_0}(x,20\a \sqrt{t})\right)}{(20\a \sqrt{t})^{n}}-C_n.
    \end{split}
\end{equation}

Combines \eqref{vol-to-ent} and \eqref{ent-to-vol}, we conclude that 
\begin{equation}
    \begin{split}
          \mathrm{Vol}_{h(t)}\left(B_{h(t)}(x,\sqrt{t})\right)&\geq e^{-C(n,\a)}\cdot  \frac{ \mathrm{Vol}_{h_0}\left(B_{h_0}(x,20\a \sqrt{t})\right)}{(20\a )^{n}}\\
          &\geq e^{-C'(n,\a)} \mathrm{Vol}_{h_0}\left(B_{h_0}(x, \sqrt{t})\right).
    \end{split}
\end{equation}

Result follows by combining this with Proposition~\ref{prop: heat Kernel}.
\vskip0.3cm 

When $M$ is compact, we need to pay more attention to the boundary of ball where we use the entropy. Using \cite[Corollary 3.3]{SimonTopping2016} and curvature assumption, we have 
\begin{equation}
    \mathrm{diam}(M,h_0)\leq \mathrm{diam}(M,h(t))+8(n-1)\sqrt{\a t}.
\end{equation}
Since $t\leq 10^{-3}(n-1)^{-2} \a^{-2}\mathrm{diam}(M,h_0)^2$, it follows that the boundary of both $ B_{h(t)}(x,\sqrt{t})$ and $B_{h_0}(x,20\a \sqrt{t})$ are non-empty so that both Lemma~\ref{lma:entrop-back-Vol} and Lemma~\ref{lma:Ric-to-Volratio} are applicable. 
\end{proof}

Now we are ready to prove Theorem~\ref{thm:pseudo-locality}. This is inspired by \cite[Theorem 1]{BamlerCabezasWilking2019}. 
\begin{proof}[Proof of Theorem~\ref{thm:pseudo-locality}] 
We focus on the case of $n\geq 4$ while the case of $n=3$ can be done by replacing the cone $\mathrm{C}_{\mathrm{PIC1}}$ with the cone of non-negative sectional curvature.

We will specify the choice of $\e_0(n)$. By scaling, it suffices to show that the conclusion holds for some $r_0$. We let $r_0$ to be a large constant $L(n)$ in which $L^{-2}<1$.  We fix $\a(n)=  10^3n\b_n$ where $\b_n>1$ is a dimensional constant so that $|\mathrm{R}|\leq \b_n \mathrm{scal}(\mathrm{R})$ for all $\mathrm{R}\in \mathrm{C}_{\mathrm{PIC1}}$.  In what follows, we will use $C_i$ to denote constants depending only on $n$.

We first note that by applying Lemma~\ref{lma:compare-k-f} to $L^{-2}h_0$, we have in addition for all $x\in M$,
\begin{equation}
    k\left(x,\frac12 r_0\right)\leq C_n \e_0.
\end{equation}
Since $f(x,r)$ is non-decreasing in $r>0$, we might assume in addition \begin{equation}k(x,r_0)\leq C_n\e_0\end{equation} by replacing $r_0$ with $\frac12 r_0$.

By the existence theory of Shi \cite{Shi1989}, it admits a short-time solution $h(t)$ to the Ricci flow. We consider the maximal existence time interval $[0,t_1)$ the Ricci flow $h(t)$ exists and satisfies
\begin{enumerate}
    \item[(i)] $|\Rm(h(t))|<\a t^{-1}$;
        \item[(ii)]   $\mathrm{Rm}(h(t))+ \tfrac{1}{2}\Lambda_0 h(t)\owedge h(t)\in \mathrm{int}\left(\mathrm{C}_{\mathrm{PIC1}}\right)$.
\end{enumerate}
We claim that if $\e_0$ is sufficiently small, then $t_1$ is uniformly bounded from below. 

We first consider the condition (ii). By \cite{BamlerCabezasWilking2019},  the function
$$\phi(x,t)=\displaystyle\inf\left\{ s>0: \mathrm{Rm}(h(x,t))+\tfrac{1}{2} s\, h(x,t)\owedge h(x,t)\in \mathrm{C}_{\mathrm{PIC1}}\right\}$$
satisfies $\rheat \phi \leq \mathcal{R}\phi+c_n\phi^2$ in the sense of barrier and hence in the distributional sense (see \cite[Appendix]{MantegazzaMascellaniUraltsev2014}). By assumption, $\phi(x,0)\leq \Lambda_0 L^{-2}$, $\phi(x,t)\leq 1$ and $\rheat \phi \leq \mathcal{R}\phi +c_n\phi$ so that maximum principle implies 
\begin{equation}
    \begin{split}
        e^{-c_nt}\phi(x,t)&\leq \int_M G(x,t;y,0) \phi(y,0) \,d\mathrm{vol}_{h_0}(y).
    \end{split}
\end{equation}
Using Lemma~\ref{lma:improved-heat}, Stokes' Theorem and volume comparison,
\begin{equation}
    \begin{split}
        e^{-c_nt}\phi(x,t)&\leq \int_0^\infty \frac{\V_{h_0}\left(B_{h_0}(x,r)\right)}{\V_{h_0}\left(B_{h_0}(x,\sqrt{t})\right)}\exp\left(-\frac{r^2}{C_1t}\right)\frac{2\Lambda_0 r}{L^2 t} \, dr\\
        &= \left(\int_0^{\sqrt{t}}+\int^\infty_{\sqrt{t}}\right) \frac{\V_{h_0}\left(B_{h_0}(x,r)\right)}{\V_{h_0}\left(B_{h_0}(x,\sqrt{t})\right)}\exp\left(-\frac{r^2}{C_1t}\right)\frac{2\Lambda_0 r}{L^2 t} \, dr\\
        &\leq \int_0^{\sqrt{t}} \exp\left(-\frac{r^2}{C_1t}\right)\frac{2\Lambda_0 r}{L^2 t} \, dr +\int_{\sqrt{t}}^\infty  \exp\left(-\frac{r^2}{C_1t}+C_n\frac{r}{\sqrt{t}}\right)\frac{2\Lambda_0 r}{L^2 t} \, dr\\
        &\leq \frac{C_2\Lambda_0 }{L^2}\int_0^\infty  \exp\left(-\frac{r^2}{C_2}\right)dr=C_3\Lambda_0 L^{-2}.
    \end{split}
\end{equation}

Now we fix $L(n)= 2\sqrt{C_3}$ so that on $M\times [0,t_1]$, 
$ \phi(x,t)\leq \frac14\Lambda_0  e^{c_nt}$ and therefore, 
\begin{equation}\label{PIC1-control}
    \mathrm{Rm}(h(t))+\frac18 \exp(c_nt)\Lambda_0  h(t)\owedge h(t)\in \mathrm{int}\left(\mathrm{C}_{\mathrm{PIC1}}\right).
\end{equation}

\bigskip

Now we proceed to estimate (i). On $[0,t_1]$, define the twisted curvature operator 
$\widetilde{\mathrm{Rm}}=\mathrm{Rm}(h(t))+h(t)\owedge h(t)\in \mathrm{C}_{\mathrm{PIC1}}$ 
and its corresponding scalar $\tilde {\mathcal{R}}$ and Ricci curvature $\widetilde {\Ric}$ so that $\tilde {\mathcal{R}}\geq  n(n-1)$.

By the Ricci flow equation and Lemma~\ref{lma:R-improve}, it satisfies
\begin{equation}
    \begin{split}
        \rheat \tilde{\mathcal{R}}&=2|\Ric|^2\\
        &=2|\widetilde{\Ric}-2(n-1) h(t)|^2\\
        &\leq \tilde {\mathcal{R}}^2 -8(n-1) \tilde {\mathcal{R}} +8 n(n-1)^2  \\
        &\leq  \mathcal{R}\cdot \tilde {\mathcal{R}} +2n(n-1) \tilde {\mathcal{R}} .
    \end{split}
\end{equation}

Hence, the function $\varphi=e^{-2n(n-1) t}\tilde {\mathcal{R}} $ satisfies 
\begin{equation}
\begin{split}
    \rheat \varphi&\leq \mathcal{R} \varphi.
\end{split}
\end{equation}

Fix an arbitrary $x_0\in M$ and define 
$$u(x,t)=\int_\Omega G_\Omega(x,t;y,0)\varphi(y,0)\,d\mathrm{vol}_{h_0}(y)$$
where $\Omega=B_{h_0}(x_0,\frac12 r_0)$ and $G_\Omega$ is the dirichlet heat kernel on $\Omega$ if $\Omega$ is a proper subset of $M$. If $\Omega=M$, then we take $G$ to be the global heat kernel. In this way, $v=\varphi-u$ satisfies $\rheat v\leq \mathcal{R} v$ and $v=0$ in the interior of $\Omega$. Then the average twisted scalar curvature satisfies
\begin{equation}
    \begin{split}
        0&\leq \tau^2\fint_{B_{h_0}(x_0,\tau)} \tilde {\mathcal{R}}\, d\mathrm{vol}_{h_0}\\
        &=\tau^2\fint_{B_{h_0}(x_0,\tau)} \left(\mathcal{R}+2 n(n-1)\right)\, d\mathrm{vol}_{h_0}\\
        &\leq C_n k(x_0,\tau)+ 2 n(n-1) \tau^2.
    \end{split}
\end{equation}
We now estimate $u$ at $x\in B_{h_0}(x_0,\frac14 r_0)$. 

Then Lemma~\ref{lma:improved-heat}, Stokes' Theorem and co-area formula imply that if $t\leq (20\a)^{-2}\wedge (10^3(n-1)^2 \a^2)^{-1} \mathrm{diam}(M,h_0)^2$, then
\begin{equation}
    \begin{split}
        u(x,t)&\leq \int_{B_{h_0}(x,r_0)} \frac{C_1}{\mathrm{Vol}_{h_0}\left(B_{h_0}(x,\sqrt{t})\right)}\exp\left(-\frac{d_{h_0}^2(x,y)}{C_1t}\right) \tilde {\mathcal{R}}(y,0)  \, d\mathrm{vol}_{h_0}(y)\\
        &\leq \frac{C_1}{4} \frac{\mathrm{Vol}_{h_0}\left(B_{h_0}(x,r_0)\right)}{\mathrm{Vol}_{h_0}\left(B_{h_0}(x,\sqrt{t})\right)}\exp\left(-\frac{r_0^2}{C_1t} \right)\cdot  \left(C_n k(x,r_0)+8 n(n-1) \right) \\
        &\quad + \frac{C_n}{t}\int_0^{r_0} \frac{ k(x,r)}{r}\frac{\mathrm{Vol}_{h_0}\left(B_{h_0}(x,r)\right)}{\mathrm{Vol}_{h_0}\left(B_{h_0}(x,\sqrt{t})\right)}\exp\left(-\frac{r^2}{C_1t}\right)   \, dr\\
        &\quad +\frac{8 n(n-1)}t \int^{r_0}_0 \frac{\mathrm{Vol}_{h_0}\left(B_{h_0}(x,r)\right)}{\mathrm{Vol}_{h_0}\left(B_{h_0}(x,\sqrt{t})\right)}\exp\left(-\frac{r^2}{C_1t}\right)   \, dr\\
        &=\mathbf{I}+\mathbf{II}+\mathbf{III}.
    \end{split}
\end{equation}

If $\sqrt{t}\leq r_0$, then volume comparison implies 
\begin{equation}
    \begin{split}
        \mathbf{I}&\leq \frac{C_4}{t^{n/2}}\exp\left(-\frac{1}{C_4t} \right)\cdot\left( C_n k(x,r_0)+8 n(n-1) \right)\\
        &\leq C_4 (C_n\e_0+1)
    \end{split}
\end{equation}
and
\begin{equation}
    \begin{split}
        \mathbf{III}&\leq \frac{C_5 }{t} \left( \int^{r_0}_{\sqrt{t}} \frac{r^n}{t^{n/2}}\exp\left(-\frac{r^2}{C_1t}\right)   \, dr + \int^{\sqrt{t}}_0\exp\left(-\frac{r^2}{C_1t}\right)  dr \right)\\
        &\leq  \frac{C_5}{t} \left( \sqrt{t}\int^{\infty}_{1} r^n\exp\left(-\frac{r^2}{C_1}\right)   \, dr + \sqrt{t}\int^{1}_0\exp\left(-\frac{r^2}{C_1}\right)  dr \right)\\
        &\leq C_5  t^{-1/2}.
    \end{split}
\end{equation}

It remains to estimate the second term $\mathbf{II}$. This is the leading term when $t\to 0$. Using volume comparison as above to see that if $\sqrt{t}\leq r_0$, then 
\begin{equation}
    \begin{split}
    \mathbf{II}&=\frac{C_n}{t} \left(\int_{\sqrt{t}}^{r_0}+\int_0^{\sqrt{t}}\right) \frac{ k(x,r)}{r}\frac{\mathrm{Vol}_{h_0}\left(B_{h_0}(x,r)\right)}{\mathrm{Vol}_{h_0}\left(B_{h_0}(x,\sqrt{t})\right)}\exp\left(-\frac{r^2}{C_1t}\right)   \, dr\\
    &\leq \frac{C_6}t \int^{r_0}_0 \frac{k(x,r)}{r} \exp\left(-\frac{r^2}{C_6t}\right)   \, dr\\
    &\leq  \frac{C_6}{t} f(x,r_0)\leq C_6\e_0 t^{-1}.    \end{split}
\end{equation}

We now require $\e_0$ to be sufficiently small so that $\mathbf{II}\leq \frac12 t^{-1}$ and thus $u(x,t)\leq  t^{-1}$ if $t\leq \min\{ t_1,c_n,r_0^2,(20\a)^{-2}\}$ for some small $c_n>0$. By \cite[Theorem 1.1]{LeeTam2022}, there exists $S_1(n)>0$ such that if $$t\leq \min\{S_1r_0^2, t_1,c_n,r_0^2,(20\a)^{-2},c_n\a^{-1} \mathrm{diam}(M,h_0)^2\},$$ then 
\begin{equation}
    \begin{split}
       e^{-2n(n-1)t}\tilde{\mathcal{R}}(x,t)&= \varphi (x,t)\leq u(x,t)+v \\
        &\leq t^{-1}+(\frac14 r_0)^{-2}
    \end{split}
\end{equation}
at $x=x_0$ and $t\in[0,\min\{S_1r_0^2, t_1,c_n,r_0^2,(20\a)^{-2},c_n\a^{-1} \mathrm{diam}(M,h_0)^2\}]$. 
Since $x_0$ is arbitrary, by the choice of $\a$ and  \eqref{PIC1-control}, we see that  $t_1\geq S_2(n)>0$ if $M$ is complete non-compact. And if $M$ is compact, then $t_1\geq S_2(n)\cdot \mathrm{diam}(M,h_0)^2$. This completes the proof.
\end{proof}

\section{Existence of Ricci flow: general case}\label{Sec:Shorttime-General}
In this section, we will mainly consider the non-compact case and will construct a short-time solution $g(t)$ to the Ricci flow using pseudo-locality. We remark that the initial metric can a-priori be very complicated at infinity. In particular, the curvature is not necessarily bounded uniformly so that Shi's construction \cite{Shi1989} does not apply directly. To overcome this, we use a trick of Topping \cite{Topping2010} to construct local solution.  

\begin{lma}\label{lma:Exhaustion}
Suppose $(M^n,g_0)$ is a complete non-compact Riemannian manifold and $x_0\in M$ such that $K(g_0)\geq -L$ on $M$ for some $L>1$. Then there exist $C_n>0$ and a smooth positive proper function $\rho$ on $M$ such that $|\rho(x)-d_{g_0}(x,x_0)|\leq 1$, $|\nabla \rho|^2\leq 2$ on $M$ and $\nabla^2\rho \leq C_n L g_0$ outside $B_{g_0}(x_0,\sqrt{L})$.
\end{lma}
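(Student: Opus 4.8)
The plan is to take $\rho$ to be a small $C^\infty$ perturbation of the distance function $r(x):=d_{g_0}(x,x_0)$, controlling its Hessian via the sectional curvature lower bound, regularizing it by a standard mollification, and translating it by a constant to gain positivity. For the Hessian control: since $\mathrm{sec}(g_0)\ge -L$, the Hessian comparison theorem gives on $M\setminus\{x_0\}$, in the barrier (upper support) sense and across $\mathrm{Cut}(x_0)$ via Calabi's trick, $\nabla^2 r\le \sqrt L\,\coth(\sqrt L\,r)\,g_0$. The elementary inequality $\coth t\le 1+t^{-1}$ (equivalent to $e^{2t}\ge 1+2t$) gives $\sqrt L\coth(\sqrt L\,r)\le \sqrt L+r^{-1}$, and since $L\ge 1$ forces $\sqrt L\ge 1\ge L^{-1/2}$, this is $\le 5L$ on the open set $U:=\{r>\tfrac14\sqrt L\}$, which contains $\{r\ge\sqrt L\}$ together with a $g_0$-collar of width $\ge \tfrac34$. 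Note also that $r$ is $1$-Lipschitz, $r\ge 0$, and $r$ is proper since $g_0$ is complete.

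Next I would apply a Greene--Wu type smoothing to $r$: for any $\varepsilon\in(0,\tfrac14)$ there is $\tilde\rho\in C^\infty(M)$ with $\sup_M|\tilde\rho-r|<\varepsilon$, $|\nabla\tilde\rho|<1+\varepsilon$ on $M$, and --- because the barrier Hessian bound holds on $U$ while $\{r\ge\sqrt L\}$ sits at positive distance from $M\setminus U$ --- $\nabla^2\tilde\rho\le (5L+\varepsilon)g_0\le C_nL\,g_0$ on $\{r\ge\sqrt L\}$. Setting $\rho:=\tilde\rho+\tfrac12$ then finishes the proof: $\rho\in C^\infty(M)$; $\rho>r-\varepsilon+\tfrac12\ge \tfrac12-\varepsilon>0$, so $\rho$ is positive; $|\rho-r|\le \varepsilon+\tfrac12<1$, so $\rho$ is proper and $|\rho-d_{g_0}(\cdot,x_0)|\le1$; $|\nabla\rho|^2=|\nabla\tilde\rho|^2<(1+\varepsilon)^2<2$; and $\nabla^2\rho=\nabla^2\tilde\rho\le C_nL\,g_0$ on $M\setminus B_{g_0}(x_0,\sqrt L)$.

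The only real work is the smoothing step: producing $\tilde\rho$ that is simultaneously $C^0$-close to $r$, has gradient norm barely exceeding $1$, and inherits the one-sided Hessian bound. Here $r$ fails to be smooth at $x_0$ --- where its Hessian blows up, but no bound is claimed there since $\sqrt L\ge1$ --- and along $\mathrm{Cut}(x_0)$, where the injectivity radius degenerates to $0$. Near $x_0$ the injectivity radius is bounded below by continuity, so a fixed-scale mollification in a normal chart suffices; near the cut locus one invokes the Greene--Wu smoothing operator, a partition-of-unity-glued Euclidean mollification at scales tending to $0$, whose correction terms are of higher order precisely because the local pieces are mutually $C^0$-close, so that a barrier inequality $\nabla^2 r\le 5L\,g_0$ on $U$ passes to $\nabla^2\tilde\rho\le(5L+\varepsilon)g_0$ on any set at positive distance from $M\setminus U$; the curvature lower bound $\mathrm{sec}\ge -L$ enters (via Rauch comparison) to control the distortion of the exponential maps, keeping the gradient bound at $1+\varepsilon$. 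Alternatively, one can simply cite the smoothing theorems of Greene--Wu, or the corresponding treatment in Schoen--Yau.
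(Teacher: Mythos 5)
Your argument is correct and follows essentially the same route as the paper, which proves the lemma by exactly this combination of the standard Hessian comparison (in the barrier sense via Calabi's trick, giving $\nabla^2 r\le \sqrt{L}\coth(\sqrt{L}\,r)\,g_0\le C L\, g_0$ away from $B_{g_0}(x_0,\sqrt{L})$) and the Greene--Wu approximation method to smooth the distance function while preserving the $C^0$, gradient, and one-sided Hessian controls. The final translation by a constant to ensure positivity is a harmless addition consistent with the stated conclusion.
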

\begin{proof}
This follows from the standard Hessian comparison and the approximation method of Greene-Wu \cite{GreeneWu1973,GreeneWu1974,GreeneWu1979}. The linear dependence on $L$ can be seen from  scaling argument.
\end{proof}

We will use the smoothed distance function $\rho$ from Lemma~\ref{lma:Exhaustion} to construct a sequence of complete bounded curvature manifolds to approximate the original manifold $(M,g_0)$ in suitable sense. To do this, let $\kappa\in (0,1)$, $f:[0,1)\to[0,\infty)$ be the function:
\be\label{e-exh-1}
 f(s)=\left\{
  \begin{array}{ll}
    0, & \hbox{$s\in[0,1-\kappa]$;} \\
    -\displaystyle{\log \lf[1-\lf(\frac{ s-1+\kappa}{\kappa}\ri)^2\ri]}, & \hbox{$s\in (1-\kappa,1)$.}
  \end{array}
\right.
\ee
Let   $\varphi\ge0$ be a smooth function on $\R$ such that $\varphi(s)=0$ if $s\le 1-\kappa+\kappa^2 $, $\varphi(s)=1$ for $s\ge 1-\kappa+2 \kappa^2 $
\be\label{e-exh-2}
 \varphi(s)=\left\{
  \begin{array}{ll}
    0, & \hbox{$s\in[0,1-\kappa+\kappa^2]$;} \\
    1, & \hbox{$s\in (1-\kappa+2\kappa^2,1)$.}
  \end{array}
\right.
\ee
such that $\displaystyle{\frac2{ \kappa^2}}\ge\varphi'\ge0$. The function
 $$\mathfrak{F}(s):=\int_0^s\varphi(\tau)f'(\tau)d\tau.$$
satisfies the important properties.
\begin{lma}[Lemma 4.1 in \cite{LeeTam2020}] \label{l-exhaustion-1}Suppose   $0<\kappa<\frac18$. Then the function $\mathfrak{F}\ge0$ defined above is smooth and satisfies the following:
\begin{enumerate}
  \item [(i)] $\mathfrak{F}(s)=0$ for $0\le s\le 1-\kappa+\kappa^2$.
  \item [(ii)] $\mathfrak{F}'\ge0$ and for any $k\ge 1$, $\exp( -k\mathfrak{F})\mathfrak{F}^{(k)}$ is uniformly  bounded.
  \item [(iii)]  For any $ 1-2\kappa <s<1$, there is $\tau>0$ with $0<s -\tau<s +\tau<1$ such that
 \bee
 1\le \exp(\mathfrak{F}(s+\tau)-\mathfrak{F}(s-\tau))\le (1+c_2\kappa);\ \ \tau\exp(\mathfrak{F}(s-\tau))\ge c_3\kappa^2
 \eee
  for some absolute constants  $c_2>0, c_3>0$.
\end{enumerate}
\end{lma}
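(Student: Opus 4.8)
The plan is to verify the three items in turn; (i) and the monotonicity $\mathfrak{F}'\ge 0$ are immediate, (ii) reduces to controlling the single logarithmic singularity of $f$ at $s=1$, and (iii) is the only substantial point. For (i): since $\varphi\equiv 0$ on $[0,1-\kappa+\kappa^2]$, the integrand $\varphi f'$ vanishes identically there, so $\mathfrak{F}\equiv 0$ on $[0,1-\kappa+\kappa^2]$; moreover $\mathfrak{F}'=\varphi f'\ge 0$ because $\varphi\ge 0$ and $f$ is nondecreasing on $[0,1)$, and $\mathfrak{F}$ is smooth on $[0,1)$ --- the only delicate point, $s=1-\kappa$ where $f$ fails to be $C^2$, is harmless since $\varphi$ vanishes in a neighbourhood of it, and on $(1-\kappa,1)$ both $\varphi$ and $f$ are smooth.

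For the higher-derivative bounds in (ii) I would split $[0,1)$ into three ranges. On $[0,1-\kappa+\kappa^2]$ all derivatives of $\mathfrak{F}$ vanish. On the fixed compact interval $[1-\kappa+\kappa^2,1-\kappa+2\kappa^2]$, which is bounded away from $s=1$, $\varphi$, $f$ and their derivatives are bounded, so each $\mathfrak{F}^{(k)}$ is bounded, while $\mathfrak{F}\ge 0$ gives $e^{-k\mathfrak{F}}\le 1$. On the tail $[1-\kappa+2\kappa^2,1)$ one has $\varphi\equiv 1$, hence $\mathfrak{F}=f+C$ with $C=\mathfrak{F}(1-\kappa+2\kappa^2)-f(1-\kappa+2\kappa^2)$ and $\mathfrak{F}^{(k)}=f^{(k)}$ for $k\ge 1$; writing $f=-\log w$ with $w(s)=1-\big((s-1+\kappa)/\kappa\big)^2$, a quadratic polynomial with $0<w\le 1$ there and $w^{(j)}\equiv 0$ for $j\ge 3$, Fa\`a di Bruno's formula together with $\frac{d^j}{dw^j}(-\log w)=(-1)^j(j-1)!\,w^{-j}$ and $w^{-j}\le w^{-k}$ for $j\le k$ yields $|f^{(k)}|\le C_{k,\kappa}\,w^{-k}=C_{k,\kappa}\,e^{kf}$. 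Since $-f(1-\kappa+2\kappa^2)\le C\le 0$ and $f(1-\kappa+2\kappa^2)=-\log(1-4\kappa^2)$ is bounded, $e^{-k\mathfrak{F}}|\mathfrak{F}^{(k)}|=e^{-kC}e^{-kf}|f^{(k)}|\le e^{-kC}C_{k,\kappa}$ is bounded, which gives (ii); the most-used case $k=1$ reads, on the tail, $e^{-\mathfrak{F}}\mathfrak{F}'=e^{-C}e^{-f}f'=e^{-C}\cdot 2u/\kappa\lesssim\kappa^{-1}$.

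For (iii), fix $1-2\kappa<s<1$ and put $\delta:=1-s\in(0,2\kappa)$. If $\delta\ge\kappa/2$, take $\tau:=c_0\kappa^2$ with $c_0$ a small absolute constant; then $[s-\tau,s+\tau]\subset[0,1-\kappa/4]$, where $\mathfrak{F}$ is Lipschitz with constant $\le 4/\kappa$ (indeed $\mathfrak{F}'=\varphi f'\le f'$ and the argument $u=(s-1+\kappa)/\kappa$ stays $\le 3/4$ there), so $\mathfrak{F}(s+\tau)-\mathfrak{F}(s-\tau)\le 8c_0\kappa\le\log(1+c_2\kappa)$ for a suitable absolute $c_2$, while $\tau\,e^{\mathfrak{F}(s-\tau)}\ge\tau\ge c_3\kappa^2$ from $\mathfrak{F}\ge 0$; clearly $0<s-\tau<s+\tau<1$. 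If $\delta<\kappa/2$, set $\tau:=\lambda\delta$ with $\lambda:=\kappa/16$; since $\kappa<1/8$ the whole interval $[s-\tau,s+\tau]$ then lies in $\{\varphi\equiv 1\}$, so $\mathfrak{F}=f+C$ on it, and using $e^{f(s')}=\big[(1-u(s'))(1+u(s'))\big]^{-1}$ with $1-u(s\pm\tau)=\delta(1\mp\lambda)/\kappa$ and $1+u(s\pm\tau)\in[1,2]$ one computes
\begin{gather*}
e^{\mathfrak{F}(s+\tau)-\mathfrak{F}(s-\tau)}=\frac{1+\lambda}{1-\lambda}\cdot\frac{1+u(s-\tau)}{1+u(s+\tau)},\\
\tau\,e^{\mathfrak{F}(s-\tau)}=\frac{\lambda\kappa\,e^{C}}{(1+\lambda)(1+u(s-\tau))},
\end{gather*}
the first lying in $[1,1+c_2\kappa]$ --- the lower bound $1$ being just the monotonicity of $\mathfrak{F}$ --- and the second being $\ge c_3\kappa^2$ because $e^{C}=1+O(\kappa^2)$; again $0<s-\tau<s+\tau<1$ is immediate. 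This establishes (iii).

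The step I expect to be the main obstacle is (iii): the correct scale for the annulus half-width is $\tau\sim\kappa(1-s)$ deep in the tail and $\tau\sim\kappa^2$ away from it, and the two demands --- that the logarithmically growing $\mathfrak{F}$ vary by only a multiplicative factor $1+O(\kappa)$ across the annulus, yet that the conformally rescaled width $\tau\,e^{\mathfrak{F}}$ stay at least $c_3\kappa^2$ --- pull $\tau$ in opposite directions, and it is exactly the explicit profile $f=-\log(1-u^2)$ that lets them be reconciled. The rest is routine bookkeeping with the explicit functions $f$ and $\varphi$.
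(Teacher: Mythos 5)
Your verification is correct, and there is nothing in the paper to compare it with: the lemma is imported verbatim as Lemma 4.1 of \cite{LeeTam2020}, so the paper gives no proof of its own and your argument plays the role of reproducing the Lee--Tam computation. Your route is the natural one and all the key steps check out: smoothness of $\mathfrak{F}$ follows because $\varphi$ kills the only non-smooth point $s=1-\kappa$ of $f'$; on $\{\varphi\equiv 1\}$ one has $\mathfrak{F}=f+C$ with $\log(1-4\kappa^2)\le C\le 0$, and the Fa\`a di Bruno bound $|f^{(k)}|\le C_{k,\kappa}\,w^{-k}=C_{k,\kappa}e^{kf}$ gives (ii); your two-scale choice of $\tau$ in (iii) (namely $\tau\sim\kappa^2$ when $1-s\ge\kappa/2$, using the Lipschitz bound $\mathfrak{F}'\le f'\le 4/\kappa$ on $\{u\le 3/4\}$, and $\tau=\tfrac{\kappa}{16}(1-s)$ deep in the tail, using the explicit factorization $e^{f}=\bigl[(1-u)(1+u)\bigr]^{-1}$) does produce absolute constants $c_2,c_3$, and the case split at $1-s=\kappa/2$ covers the whole range $1-2\kappa<s<1$. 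One interpretive point worth making explicit: in (ii) ``uniformly bounded'' can only mean uniform in $s$, with constants depending on $k$, $\kappa$ and the fixed cutoff $\varphi$ --- a $\kappa$-independent bound is impossible, since on the tail $e^{-\mathfrak{F}}\mathfrak{F}'=e^{-C}\cdot 2u/\kappa\sim\kappa^{-1}$ --- and this is exactly how the lemma is used here and in \cite{LeeTam2020}, where $\kappa$ is fixed once and for all.
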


We now prove the main Theorem, Theorem~\ref{Thm:RF-existence}.
\begin{proof}[Proof of Theorem~\ref{Thm:RF-existence}]
We only consider the case $n\geq 4$ since the case of $n=3$ can be proved using similar but simpler argument. We will also focus on the non-compact case since the compact case follows directly from Theorem~\ref{thm:pseudo-locality}.

By scaling, we will assume $r=1$. To construct the approximation, take $\rho$ to be the smooth function obtained from Lemma~\ref{lma:Exhaustion}. For any $R>0$ sufficiently large, we let $U_R$ be the component of $\{x\in M: \rho(x)<R\}$ which contains a fixed point $x_0\in M$. In this way, $U_R$ will exhaust $M$ as $R\to +\infty$. Without loss of generality, we might assume $U_R$ to have smooth boundary. On each $U_R$, define 
$$F_R(x)=\mathfrak{F}\left(\frac{\rho(x)}{R}\right),\;\; g_{R,0}=e^{2F_R}g_0.$$

By \cite[Lemma 4.3]{LeeTam2020} (see also \cite{Hochard}), $g_{R,0}$ is a complete metric on $U_R$ with bounded curvature (in fact, bounded geometry of infinity order). We fix a small $\kappa$ in the construction of $\mathfrak{F}$. We claim that for all sufficiently large $R$, each $g_{R,0}$ satisfies the assumptions in Theorem~\ref{thm:pseudo-locality} regardless of how large $L$ is. We will omit the $R$ on $F_R$ for notational convenience.

\begin{claim}\label{claim:PIC1-preserved}
There is $R_0>0$ such that for all $R>R_0$, 
$$\mathrm{Rm}(g_{R,0})+g_{R,0}\owedge g_{R,0}\in \mathrm{C}_{\mathrm{PIC1}}.$$
\end{claim}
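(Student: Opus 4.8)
The plan is to show that the conformal change $g_{R,0} = e^{2F_R} g_0$ preserves membership in $\mathrm{C}_{\mathrm{PIC1}}$ up to a controlled error absorbed by the $g_{R,0}\owedge g_{R,0}$ term, for $R$ large. First I would recall the transformation law for the curvature operator under a conformal change $\tilde g = e^{2\phi}g$ (here $\phi = F_R$): writing the Schouten-type tensor $E_\phi = \nabla^2\phi - d\phi\otimes d\phi + \tfrac12|\nabla\phi|^2 g$, one has
\begin{equation}
e^{-2\phi}\mathrm{Rm}(\tilde g) = \mathrm{Rm}(g) - E_\phi \owedge g.
\end{equation}
Since $\mathrm{C}_{\mathrm{PIC1}}$ is a convex cone and $\tfrac12 g\owedge g$ (the round-sphere operator) lies in its interior, it suffices to show that $-E_\phi\owedge g + g\owedge g$ — equivalently $(\tfrac12 g - E_\phi)\owedge g$ up to the harmless factor — has a suitable sign; more precisely I would show $E_\phi \le C_n g$ in the quadratic-form sense (on $2$-forms via $\owedge$), with $C_n$ independent of $L$ and $R$, once $R$ is large enough, so that the defect is dominated by the twist term.

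The key steps, in order: (1) By Lemma~\ref{lma:Exhaustion}, $|\nabla\rho|^2\le 2$ and $\nabla^2\rho\le C_n L g_0$ outside $B_{g_0}(x_0,\sqrt L)$, while $\rho$ is smooth with bounded Hessian on the compact set $B_{g_0}(x_0,\sqrt L)$; combined with the properties of $\mathfrak F$ from Lemma~\ref{l-exhaustion-1}(ii) — namely $\mathfrak F'\ge 0$ and $e^{-k\mathfrak F}\mathfrak F^{(k)}$ uniformly bounded — one computes $\nabla F = \tfrac{\mathfrak F'}{R}\nabla\rho$ and $\nabla^2 F = \tfrac{\mathfrak F''}{R^2}\,d\rho\otimes d\rho + \tfrac{\mathfrak F'}{R}\nabla^2\rho$. (2) The term $\tfrac12|\nabla F|^2 g = \tfrac{(\mathfrak F')^2}{2R^2}|\nabla\rho|^2 g \ge 0$ only helps (it adds a nonnegative multiple of $g\owedge g$). (3) The dangerous part is $\nabla^2 F$: the $\mathfrak F''$ term can be large, but crucially $E_\phi$ enters $\mathrm{Rm}(\tilde g)$ with a minus sign while $e^{-k\mathfrak F}\mathfrak F^{(k)}$ bounded gives $\mathfrak F''/R^2 \le C e^{2\mathfrak F}/R^2 = C e^{2F}/R^2$, and after multiplying the whole identity through by $e^{2\phi}=e^{2F}$ one sees the contribution to $\mathrm{Rm}(g_{R,0})$ is $O(e^{4F}/R^2)\cdot g_{R,0}\owedge g_{R,0}$-type... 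I would instead track signs carefully: since $\mathfrak F$ is \emph{increasing} in $\rho$ along the direction where the cutoff turns on, the leading $\mathfrak F''$ term, which blows up, has the \emph{right} sign to keep $-\nabla^2 F\owedge g$ inside the cone (this is exactly the mechanism in \cite[Lemma 4.3]{LeeTam2020}), while the $\nabla^2\rho$ and $|\nabla\rho|^2$ contributions are bounded by $C_n L \cdot \tfrac{\mathfrak F'}{R}$, which tends to $0$ uniformly as $R\to\infty$ for fixed $L$. (4) On $B_{g_0}(x_0,\sqrt L)$ we have $F_R\equiv 0$ for $R$ large (since $\rho<\sqrt L < R(1-\kappa)$ there and $\mathfrak F$ vanishes on $[0,1-\kappa+\kappa^2]$), so $g_{R,0}=g_0$ and $\mathrm{Rm}(g_0)+g_0\owedge g_0\in \mathrm{C}_{\mathrm{PIC1}}$ trivially by $\mathrm{Rm}(g_0)\in\mathrm{C}_{\mathrm{PIC1}}+$ something — wait, here $g_0$ only satisfies $\mathrm{Rm}(g_0)+\Lambda_0 r^{-2}g_0\owedge g_0/2\in\mathrm{C}_{\mathrm{PIC1}}$ with $\Lambda_0<1$, which still gives the claim since $\Lambda_0 r^{-2}/2 = \Lambda_0/2 < 1$. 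Assembling (1)–(4), choosing $R_0 = R_0(n,L,\kappa)$ large enough that the bounded error terms are $<\tfrac12$, yields $\mathrm{Rm}(g_{R,0}) + g_{R,0}\owedge g_{R,0}\in\mathrm{C}_{\mathrm{PIC1}}$.

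The main obstacle I expect is step (3): making precise that the singular conformal factor $\mathfrak F$, which is designed to blow up near $\partial U_R$ to make the metric complete, contributes to the curvature \emph{in the direction that preserves the cone}. One must verify that the Kulkarni-Nomizu product $-(\nabla^2 F - dF\otimes dF)\owedge g$, with $\nabla^2 F$ dominated by its positive-definite $\mathfrak F''\, d\rho\otimes d\rho$ part, stays in $\mathrm{C}_{\mathrm{PIC1}}$ — this follows because $-(a\, d\rho\otimes d\rho)\owedge g$ for $a>0$ lies in the cone of operators with nonpositive... actually one checks directly that for a rank-one symmetric $2$-tensor $\xi\otimes\xi$, the operator $-(\xi\otimes\xi)\owedge g$ has complex sectional curvatures controlled by the sign of the trace, and combined with the large \emph{positive} $\tfrac12|\nabla F|^2 g\owedge g$ piece the total lies in $\mathrm{C}_{\mathrm{PIC1}}$; this is precisely the content of \cite[Lemma 4.3]{LeeTam2020} adapted from the sectional-curvature setting, and I would invoke it, checking only that its hypotheses on $\kappa$ (the range $0<\kappa<\tfrac18$ of Lemma~\ref{l-exhaustion-1}) and on the lower curvature bound transfer verbatim to the $\mathrm{C}_{\mathrm{PIC1}}$ cone, which they do since that cone is convex, $O(n)$-invariant, and contains the round operator in its interior.
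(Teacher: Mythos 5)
Your overall frame (conformal transformation law, convexity of $\mathrm{C}_{\mathrm{PIC1}}$, smallness of derivative terms for large $R$, and the observation that $g_{R,0}=g_0$ where $F_R\equiv 0$) matches the paper, but the crux step (3) contains a genuine gap: the sign mechanism you invoke for the $\mathfrak{F}''$ term is not correct, and it is not the mechanism that makes the claim true. With the paper's conventions the error in the PIC1 quantity is
\begin{equation*}
E_0=-e^{-2F}\Big[(1+\lambda^2)(F_{11}+F_{22}-F_1^2-F_2^2)+2F_{33}-2F_3^2+2\lambda^2F_{44}-2\lambda^2F_4^2\Big]-2e^{-2F}|\nabla F|^2(1+\lambda^2),
\end{equation*}
so the Hessian entries $F_{ii}\supset \mathfrak{F}''\rho_i^2/R^2$ enter with a \emph{minus} sign: a large positive $\mathfrak{F}''$ pushes you \emph{out} of the cone, not into it (equivalently, $-a\,(d\rho\otimes d\rho)\owedge g$ with $a>0$ has nonpositive complex sectional curvatures on the relevant sections, so it is on the wrong side of $\mathrm{C}_{\mathrm{PIC1}}$). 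Your proposed rescue by the $\tfrac12|\nabla F|^2 g$ piece also has the sign backwards: in the conformal formula that piece likewise \emph{lowers} sectional curvatures (it appears as $-2e^{-2F}|\nabla F|^2(1+\lambda^2)$ above); the term that helps is $+dF\otimes dF$, and for this particular $\mathfrak{F}$ one has $\mathfrak{F}''\ge(\mathfrak{F}')^2$ pointwise, so no cancellation of the Hessian term by gradient terms is available. Citing \cite{LeeTam2020} does not fill this: their Lemma 4.3 asserts completeness and bounded curvature of $e^{2F}g_0$, not any cone-preservation-by-sign statement.

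The correct mechanism — which you actually had in hand before abandoning it — is that every error term is small once it is measured with respect to $g_{R,0}$, i.e. evaluated on $g_{R,0}$-orthonormal frames $\tilde e_i=e^{-F}e_i$, which multiplies each curvature entry by $e^{-2F}$. Your intermediate bound ``$O(e^{4F}/R^2)\cdot g_{R,0}\owedge g_{R,0}$'' is a mis-accounting of these conformal weights: since $d\rho\otimes d\rho\le 2e^{-2F}g_{R,0}$ and $g_0=e^{-2F}g_{R,0}$, the $\mathfrak{F}''$ contribution is in fact of size $e^{-2F}\mathfrak{F}''/R^2\le C/R^2$ relative to $g_{R,0}\owedge g_{R,0}$, by Lemma~\ref{l-exhaustion-1}(ii) with $k=2$; similarly $e^{-2F}\mathfrak{F}'\rho_{ii}/R\le C_nL/R$ using Lemma~\ref{lma:Exhaustion} (the Hessian bound applies because $\mathfrak{F}'\ne 0$ only where $\rho>(1-\kappa+\kappa^2)R\gg\sqrt L$), and $e^{-2F}|\nabla F|^2\le C/R^2$. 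This gives $E_0\ge -C_nL(1+\lambda^2)/R$, while the twist $g_{R,0}\owedge g_{R,0}$ contributes $4(1+\lambda^2)$ and the initial hypothesis (after scaling $r=1$, $\Lambda_0<1$) costs at most $2e^{-2F}(1+\lambda^2)\le 2(1+\lambda^2)$; hence the PIC1 quantity is at least $(2-C_nLR^{-1})(1+\lambda^2)>0$ for $R>R_0(n,L,\kappa)$. Note also that, as stated, your target bound ``$E_\phi\le C_n g$ with $C_n$ dimensional'' would not suffice — the margin left by a single $g_{R,0}\owedge g_{R,0}$ twist is only $2(1+\lambda^2)$, so you genuinely need the $o(1)$ (in $R$) smallness that the $e^{-2F}$-weighted estimates provide.
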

\begin{proof}[Proof of claim]
For notational convenience, We use $\tilde g,\tilde R$ to denote $g_{R,0}$ and the curvature tensor of $g_{R,0}$.  Recall  the curvature under conformal change:
    $$\widetilde {\mathrm{Rm}}=e^{2F}\mathrm{Rm}-e^{2F}g\owedge \left(\nabla^2F- dF\otimes dF+\frac12 |\nabla F|^2g \right).$$

Since $\mathrm{C}_{\mathrm{PIC1}}$ is convex, it suffices to estimate the lower bound  of each terms with respect to $\mathrm{C}_{\mathrm{PIC1}}$. By assumption,  $\mathrm{Rm}+r^{-2} g\owedge g/2\in \mathrm{C}_{\mathrm{PIC1}}$ so that 
\begin{equation}
e^{2F}\mathrm{Rm}+e^{-2F}r^{-2} \tilde g\owedge \tilde g/2\in \mathrm{C}_{\mathrm{PIC1}}.
\end{equation}
Let $\{\tilde e_i\}_{i=1}^4$ and $\{e_i=e^F\tilde e_i\}_{i=1}^4$ be an orthonormal frame w.r.t $\tilde g$ and $g$ respectively. Then for any $\lambda\in [0,1]$, 
\begin{eqnarray*}
& &\widetilde {\mathrm{Rm}}_{\t 1\t 3\t 3 \t 1}+(\tilde g\owedge \tilde g)_{\t 1\t 3\t 3 \t 1}+\lambda^2\widetilde {\mathrm{Rm}}_{\t 1\t 4\t 4 \t 1}+\lambda^2(\tilde g\owedge \tilde g)_{\t 1\t 4\t 4 \t 1}\\
& &+ \widetilde {\mathrm{Rm}}_{\t 2\t 3\t 3 \t 2}+(\tilde g\owedge \tilde g)_{\t 2\t 3\t 3 \t 2}+\lambda^2\widetilde {\mathrm{Rm}}_{\t 2\t 4\t 4 \t 2}+\lambda^2(\tilde g\owedge \tilde g)_{\t 2\t 4\t 4 \t 2}\\
& & +2\lambda \widetilde {\mathrm{Rm}}_{\t 1\t 2\t 3 \t 4}+2\lambda(\tilde g\owedge \tilde g)_{\t 1\t 2\t 3 \t 4}\\
&=&\widetilde {\mathrm{Rm}}_{\t 1\t 3\t 3 \t 1}+\lambda^2\widetilde {\mathrm{Rm}}_{\t 1\t 4\t 4 \t 1}+\widetilde {\mathrm{Rm}}_{\t 2\t 3\t 3 \t 2}+\lambda^2\widetilde {\mathrm{Rm}}_{\t 2\t 4\t 4 \t 2}+2\lambda \widetilde {\mathrm{Rm}}_{\t 1\t 2\t 3 \t 4}\\
& &+4(1+\lambda^2)\\
&=&e^{-2F}\left(\mathrm{Rm}_{1331}+\lambda^2\mathrm{Rm}_{1441}+\mathrm{Rm}_{2332}+\lambda^2\mathrm{Rm}_{2442}+2\lambda\mathrm{Rm}_{1234}\right)\\
& &+4(1+\lambda^2)+E_0\\
&\ge& (4-2e^{-2F})(1+\lambda^2)+E_0\\
&\ge& 2(1+\lambda^2)+E_0,
\end{eqnarray*}
where the error term $E_0$ is given by
\begin{eqnarray*}
&&-e^{-2F}\left((1+\lambda^2)(F_{11}+F_{22}-F_1F_1-F_2F_2)+2F_{33}-2 F_3F_3+2\lambda^2F_{44}-2\lambda^2F_4F_4\right)\\
&&-2e^{-2F}|\nabla F|^2(1+\lambda^2).
\end{eqnarray*}
We shall use Lemma \ref{l-exhaustion-1} to estimate the derivatives of $F$. By Lemma \ref{l-exhaustion-1} (ii)
\[
e^{-2F}|\nabla F|^2=e^{-2F}\left(\mathfrak{F}'\right)^2\frac{|\nabla \rho|^2}{R^2}\le C\frac{|\nabla \rho|^2}{R^2}\le \frac{2C}{R^2}.
\]
For large $R\gg 1$, $\mathfrak{F}'\frac{\rho_{ii}}{R}\leq \mathfrak{F}'\frac{C_nL}{R}$, hence by Lemmas \ref{lma:Exhaustion} and \ref{l-exhaustion-1} (ii), for $i=1,\dots, 4$,
\[
e^{-2F}F_{ii}=e^{-2F}\left(\mathfrak{F}'\frac{\rho_{ii}}{R}+\mathfrak{F}''\frac{\rho_i\rho_i}{R^2}\right)\le \frac{C_nL}{R}+\frac{C_n}{R^2},
\]
here we are not using Einstein summation convention, repeated indices are not summed.
From this we see that 
\[
E_0\ge -\frac{C_nL(1+\lambda^2)}{R}
\]
and for all sufficiently large $R$
\begin{eqnarray*}
& &\widetilde {\mathrm{Rm}}_{\t 1\t 3\t 3 \t 1}+(\tilde g\owedge \tilde g)_{\t 1\t 3\t 3 \t 1}+\lambda^2\widetilde {\mathrm{Rm}}_{\t 1\t 4\t 4 \t 1}+\lambda^2(\tilde g\owedge \tilde g)_{\t 1\t 4\t 4 \t 1}\\
& &+ \widetilde {\mathrm{Rm}}_{\t 2\t 3\t 3 \t 2}+(\tilde g\owedge \tilde g)_{\t 2\t 3\t 3 \t 2}+\lambda^2\widetilde {\mathrm{Rm}}_{\t 2\t 4\t 4 \t 2}+\lambda^2(\tilde g\owedge \tilde g)_{\t 2\t 4\t 4 \t 2}\\
& & +2\lambda \widetilde {\mathrm{Rm}}_{\t 1\t 2\t 3 \t 4}+\lambda(\tilde g\owedge \tilde g)_{\t 1\t 2\t 3 \t 4}\\
&\ge&(2-C_nLR^{-1})(1+\lambda^2)>0.
\end{eqnarray*}
This completes the proof of the claim.
\end{proof}

\begin{claim}\label{claim:Morrey-preserved}
There exists $C_n,R_0>0$ such that if $R>R_0$,  then for all $(x,r)\in U_R\times [0,1)$, 
\begin{equation}
f_{g_{R,0}}(x,r)=\displaystyle \int^r_0 s^{-1} k_{g_{R,0}}(x,s) \,ds \leq C_n \e_0 +C_nr.
\end{equation}
\end{claim}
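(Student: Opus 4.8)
The plan is to estimate $k_{g_{R,0}}(x,s)$ for $0<s\le 1$ by comparing the ball $B_{g_{R,0}}(x,s)$ with nearby $g_0$-balls and then integrating in $s$. Write $\tilde g:=g_{R,0}=e^{2F}g_0$ with $F:=F_R=\mathfrak{F}(\rho/R)\ge 0$, nondecreasing in $\rho$ and vanishing on $\{\rho/R\le 1-\kappa+\kappa^2\}$; note $\tilde g\ge g_0$, and after scaling $r=1$ we have $\Ric(g_0)\ge-(n-1)$, $K(g_0)\ge -L$ for some $L>1$, and hypothesis (c) reads $f_{g_0}(x,1)<\e_0$. The first observation is that $F$ is almost constant on small $\tilde g$-balls: along a $\tilde g$-unit-speed curve from $x$, $|\tfrac{d}{dt}F|=\mathfrak{F}'(\rho/R)\tfrac{|\dot\rho|}{R}\le \mathfrak{F}'(\rho/R)e^{-F}\tfrac{|\nabla\rho|_{g_0}}{R}\le C_n/R$ by $|\nabla\rho|_{g_0}^2\le 2$ (Lemma~\ref{lma:Exhaustion}) and the boundedness of $e^{-\mathfrak{F}}\mathfrak{F}'$ (Lemma~\ref{l-exhaustion-1}(ii)); hence $|F-F(x)|\le C_n/R$ on $B_{\tilde g}(x,s)$ for $s\le1$. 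For $R\ge R_0(n,L)$ this makes $e^{-2F}$ and $e^{nF}$ differ from $e^{-2F(x)},e^{nF(x)}$ by a factor at most $2$ on such a ball, gives (by standard metric comparison) $B_{g_0}(x,e^{-C_n/R}e^{-F(x)}s)\subseteq B_{\tilde g}(x,s)\subseteq B_{g_0}(x,e^{C_n/R}e^{-F(x)}s)$, and trivially $B_{\tilde g}(x,s)\subseteq B_{g_0}(x,s)$ since $\tilde g\ge g_0$.

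Next, a pointwise curvature bound. From the conformal change formula used in the proof of Claim~\ref{claim:PIC1-preserved} and the scaling $|\cdot|_{\tilde g}=e^{-4F}|\cdot|_{g_0}$ for $(0,4)$-tensors,
\[
|\Rm(\tilde g)|_{\tilde g}\le e^{-2F}\Big(|\Rm(g_0)|_{g_0}+C_n\big(|\nabla^2F|_{g_0}+|\nabla F|_{g_0}^2\big)\Big).
\]
Writing $\nabla F=\mathfrak{F}'\nabla\rho/R$, $\nabla^2F=\mathfrak{F}''\,d\rho\otimes d\rho/R^2+\mathfrak{F}'\,\nabla^2\rho/R$, and using Lemmas~\ref{lma:Exhaustion} and \ref{l-exhaustion-1}(ii) (Hessian control of $\rho$, whose $\mathrm{supp}\,\nabla F$ is far from $x_0$ for $R$ large, and the boundedness of $e^{-\mathfrak{F}}\mathfrak{F}'$, $e^{-2\mathfrak{F}}\mathfrak{F}''$), one gets $e^{-2F}(|\nabla^2F|_{g_0}+|\nabla F|_{g_0}^2)\le\eta_R:=C_nLR^{-1}+C_nR^{-2}$. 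So, on $B_{\tilde g}(x,s)$, $|\Rm(\tilde g)|_{\tilde g}\le 2e^{-2F(x)}|\Rm(g_0)|_{g_0}+\eta_R$. (Should only the one-sided bound $\nabla^2\rho\le C_nLg_0$ be available, one instead uses $|\mathrm{R}|\le\beta_n\,\mathrm{scal}(\mathrm{R})$ on $\mathrm{C}_{\mathrm{PIC1}}$ together with Claim~\ref{claim:PIC1-preserved} to get $|\Rm(\tilde g)|_{\tilde g}\le C_n\,\mathrm{scal}(\tilde g)^++C_n$, and then the conformal formula for $\mathrm{scal}$, which involves only $\Delta_{g_0}\rho$.)

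Averaging this over $B_{\tilde g}(x,s)$ against $d\mathrm{vol}_{\tilde g}=e^{nF}d\mathrm{vol}_{g_0}$, using the bounds above on $e^{nF}$ to pass to the $g_0$-volume-average over the same set, and controlling that average by Bishop--Gromov ($\Ric(g_0)\ge-(n-1)$, radii $\le1$) and the ball comparisons, one obtains $k_{\tilde g}(x,s)\le C_n k_{g_0}(x,\lambda(x)s)+\eta_R s^2$ for some $\lambda(x)\in(0,1]$: when $F(x)\le C_n/R$ take $\lambda(x)=1$ and use $B_{g_0}(x,e^{-2C_n/R}s)\subseteq B_{\tilde g}(x,s)\subseteq B_{g_0}(x,s)$ together with $e^{-2F(x)}\le1$; when $F(x)>C_n/R$ take $\lambda(x)=e^{C_n/R}e^{-F(x)}\le1$, set $\rho_\pm=e^{\mp C_n/R}e^{-F(x)}s$ (so $\rho_+\le s\le1$, $\rho_+/\rho_-\le2$), use $B_{g_0}(x,\rho_-)\subseteq B_{\tilde g}(x,s)\subseteq B_{g_0}(x,\rho_+)$ and $e^{-2F(x)}/\rho_+^2\le s^{-2}$. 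Then $f_{\tilde g}(x,r)=\int_0^r s^{-1}k_{\tilde g}(x,s)\,ds\le C_n\int_0^r s^{-1}k_{g_0}(x,\lambda(x)s)\,ds+\eta_R\int_0^r s\,ds$; the substitution $u=\lambda(x)s$ turns the first integral into $C_n f_{g_0}(x,\lambda(x)r)\le C_n f_{g_0}(x,r)\le C_n f_{g_0}(x,1)<C_n\e_0$ (monotonicity of $f_{g_0}$, hypothesis (c), and $\lambda(x)r\le r<1$), while the second is $\eta_R r^2/2\le r$ once $R\ge R_0(n,L)$ makes $\eta_R\le1$. This gives $f_{g_{R,0}}(x,r)\le C_n\e_0+C_n r$.

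The step I expect to be the real obstacle is the radius bookkeeping that makes the above work: one must keep the reference radius $\lambda(x)r$ at or below $r<1$, since after scaling hypothesis (c) only controls $f_{g_0}(x,\cdot)$ up to radius $1$. The free inclusion $B_{\tilde g}(x,s)\subseteq B_{g_0}(x,s)$ (from $\tilde g\ge g_0$) handles the region where $F$ is nearly $0$ and the conformal distortion is negligible; where $F(x)$ is not small, the stretching makes $B_{\tilde g}(x,s)$ genuinely a $g_0$-ball of radius $\sim e^{-F(x)}s\le s$, which simultaneously keeps the reference radius controlled and cancels the factor $e^{(n-2)F(x)}$ that would otherwise preclude an $R$-independent bound. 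The remainder — the conformal curvature formula and the derivative estimates for $\mathfrak{F}$ and $\rho$ — is routine given Lemmas~\ref{lma:Exhaustion} and \ref{l-exhaustion-1}.
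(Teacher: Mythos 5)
Your overall architecture (compare $B_{g_{R,0}}(x,s)$ with $g_0$-balls of radius $\approx e^{-F(x)}s$, exploit that the reference radius only shrinks, substitute and use monotonicity of $f_{g_0}$ together with hypothesis (c)) is the same as the paper's, and your oscillation bound $|F-F(x)|\le C_n/R$ on unit $\tilde g$-balls is a legitimate substitute for the paper's use of Lemma~\ref{l-exhaustion-1}(iii). But there is a genuine gap at the step you treat as routine: the pointwise bound $e^{-2F}\left(|\nabla^2F|_{g_0}+|\nabla F|^2_{g_0}\right)\le \eta_R$. Lemma~\ref{lma:Exhaustion} gives only the \emph{one-sided} estimate $\nabla^2\rho\le C_nL\,g_0$; the Greene--Wu smoothed distance has no pointwise lower Hessian (or Laplacian) bound, since near smoothed cut-locus points $\nabla^2\rho$ can be arbitrarily negative. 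So $|\nabla^2F|_{g_0}$ is not controlled, and your main route breaks. Your parenthetical fallback does not repair this: after reducing to $\mathrm{scal}(\tilde g)$ via $\mathrm{C}_{\mathrm{PIC1}}$ and Claim~\ref{claim:PIC1-preserved}, the conformal formula gives $\mathcal{R}_{\tilde g}=e^{-2F}\left(\mathcal{R}_{g_0}-2(n-1)\Delta F-(n-1)(n-2)|\nabla F|^2\right)$ with $\Delta F=\mathfrak{F}'\Delta\rho/R+\mathfrak{F}''|\nabla\rho|^2/R^2$, so an \emph{upper} bound on $\mathcal{R}_{\tilde g}$ requires a \emph{lower} bound on $\Delta\rho$ where $\mathfrak{F}'>0$ --- exactly the side that is unavailable. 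The available bound $\Delta\rho\le C_nL$ is on the wrong side, so no pointwise estimate of the form $|\Rm(\tilde g)|_{\tilde g}\le 2e^{-2F(x)}|\Rm(g_0)|_{g_0}+\eta_R$ can be extracted this way.

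This is precisely the point where the paper's proof does something you omit: it never bounds the conformal correction pointwise, but estimates the \emph{average} of $\mathcal{R}_{g_{R,0}}$ over $B_{g_{R,0}}(x,r)$ and handles the troublesome term $-\tfrac{2(n-1)}{R}\fint e^{-2F}\mathfrak{F}'\Delta\rho$ by Stokes' theorem, converting it into an interior term involving only $\nabla\rho\cdot\nabla\left(e^{(n-2)F}\mathfrak{F}'\right)$ (controlled by $|\nabla\rho|^2\le 2$ and Lemma~\ref{l-exhaustion-1}(ii)) plus a boundary term $\tfrac{C_n}{R}\,r^2\,|\partial B_{g_{R,0}}(x,r)|/\mathrm{Vol}(B_{g_{R,0}}(x,r))$, which is $o(1)r$ by volume comparison, the Ricci lower bound coming from Claim~\ref{claim:PIC1-preserved}. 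To fix your argument you should adopt this averaged integration-by-parts step (or otherwise produce a lower Laplacian bound for $\rho$, which the construction does not supply); the remaining bookkeeping in your write-up, including the radius substitution you flag as the main obstacle, then goes through as in the paper.
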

\begin{proof}[Proof of Claim]
Let $x\in U_R$ and $r<1$. 
If $x$ is such that $\rho(x)<(1-2\kappa)R$, then for all $z\in B_{g_0}(x,1)$, $\rho(z)\leq (1-\kappa+\kappa^2)R$ provided that $R$ is sufficiently large. This particularly implies $g_{R,0}=g_0$ on $B_{g_0}(x,1)$ so that the conclusion holds trivially thanks to the assumptions.

If $(1-2\kappa)R\leq \rho(x)<R$,  then (iii) in Lemma~\ref{l-exhaustion-1} implies that as long as $R$ is sufficiently large,  we have $B_{g_{R,0}}(x,1)\subset  \{z\in U_R:  s-\tau<R^{-1}\rho(z)<s+\tau\}$ where $s=R^{-1}\rho(x)$ and $\tau=\tau(s)$ is a constant depending only on $s$. In particular, we have 
\begin{equation}\label{metric-equ}
e^{2\mathfrak{F}(s-\tau)}g_0\leq g_{R,0}\leq e^{2\mathfrak{F}(s+\tau)}g_0
\end{equation}
on $B_{g_{R,0}}(x,1)$. By the conformal change formula for scalar curvature, for all $r\in (0,1]$,
\begin{equation}
\begin{split}
&\quad r^2\fint_{B_{g_{R,0}}(x,r)}\mathcal{R}_{g_{R,0}} \, d\mathrm{vol}_{g_{R,0}}\\
&=r^2\fint_{B_{g_{R,0}}(x,r)} e^{-2F}\mathcal{R}_{g_{0}} \, d\mathrm{vol}_{g_{R,0}}-\frac{2(n-1)}{R}r^2\fint_{B_{g_{R,0}}(x,r)} e^{-2F}\mathfrak{F}'\Delta\rho \, d\mathrm{vol}_{g_{R,0}}\\
&\quad +r^2\fint_{B_{g_{R,0}}(x,r)} e^{-2F}\left[-\frac{4(n-1)}{n-2}\left(\frac{(n-2)^2}{4R^2}|\mathfrak{F}'|^2|\nabla\rho|^2+\frac{n-2}{2R^2}\mathfrak{F}''|\nabla\rho|^2\right)\right] \, d\mathrm{vol}_{g_{R,0}}\\
&=\mathbf{I}+\mathbf{II}+\mathbf{III}.
\end{split}
\end{equation}

By Lemma~\ref{lma:Exhaustion} and Lemma~\ref{l-exhaustion-1}, for all $r\in (0,1]$,
\begin{equation}
\begin{split}
\mathbf{III}&\leq \frac{C_n}{R^2}\cdot r^2\fint_{B_{g_{R,0}}(x,r)}\, d\mathrm{vol}_{g_{R,0}}\leq  \frac{C_nr^2}{R^2}=o(1)r^2.
\end{split}
\end{equation}

For $\mathbf{II}$, we control it using Stokes' Theorem, Lemma~\ref{l-exhaustion-1} and Lemma~\ref{lma:Exhaustion}:
\begin{equation}
\begin{split}
\mathbf{II}&=-\frac{2(n-1)}{R }\frac{r^2}{\mathrm{Vol}_{g_{R,0}}\left(B_{g_{R,0}}(x,r)\right)}\int_{B_{g_{R,0}}(x,r)} e^{(n-2)F}\mathfrak{F}'\Delta \rho \, d\mathrm{vol}_{g_{0}}\\
&=\frac{2(n-1)}{R }\frac{r^2}{\mathrm{Vol}_{g_{R,0}}\left(B_{g_{R,0}}(x,r)\right)}  \int_{B_{g_{R,0}}(x,r)} \nabla \rho\cdot  \nabla(e^{(n-2)F}\mathfrak{F}')\, d\mathrm{vol}_{g_{0}}\\
&\quad -\frac{2(n-1)}{R }\frac{r^2}{\mathrm{Vol}_{g_{R,0}}\left(B_{g_{R,0}}(x,r)\right)}\int_{\partial B_{g_{R,0}(x,r)}} e^{(n-2)F}\mathfrak{F}'  \cdot \nabla_\nu \rho \, dA_{g_0}\\
&\leq \frac{C_nr^2}{R^2}+\frac{C_n}{R }\frac{r^2}{\mathrm{Vol}_{g_{R,0}}\left(B_{g_{R,0}}(x,r)\right)}|\partial B_{g_{R,0}}(x,r)|_{g_{R,0}} \leq \frac{C_nr}{R^2}=o(1)r.
\end{split}
\end{equation}
Here we have used the volume comparison and Claim~\ref{claim:PIC1-preserved} on the last inequality. 

It remains to control $\mathbf{I}$.  Using \eqref{metric-equ}, Lemma~\ref{l-exhaustion-1} and volume comparison,  for $r\in (0,1]$,
\begin{equation}
\begin{split}
&\quad r^2\fint_{B_{g_{R,0}}(x,r)} e^{-2F} \mathcal{R}_{g_{0}} \, d\mathrm{vol}_{g_{R,0}}\\
&\leq \frac{C_nr^2}{\mathrm{Vol}_{g_{R,0}}\left(B_{g_{R,0}}(x,r)\right)} \int_{B_{g_{R,0}}(x,r)} e^{(n-2)F} |\Rm(g_0)| \, d\mathrm{vol}_{g_{0}}\\
&\leq \frac{C_n r^2 \cdot e^{(n-2)\mathfrak{F}(s+\tau)-n\mathfrak{F}(s-\tau)}}{\mathrm{Vol}_{g_{0}}\left(B_{g_{0}}(x,e^{-\mathfrak{F}(s+\tau)}r)\right)}  \int_{B_{g_{0}}(x,e^{-\mathfrak{F}(s-\tau)}r)} |\Rm(g_0)| \, d\mathrm{vol}_{g_{0}}\\
&\leq C_ne^{(n-2)(\mathfrak{F}(s+\tau)-\mathfrak{F}(s-\tau))} \frac{\mathrm{Vol}_{g_{0}}\left(B_{g_{0}}(x,e^{-\mathfrak{F}(s-\tau)}r)\right)} {\mathrm{Vol}_{g_{0}}\left(B_{g_{0}}(x,e^{-\mathfrak{F}(s+\tau)}r)\right)} \cdot  k(x,e^{-\mathfrak{F}(s-\tau)}r)\\
&\leq C_n k(x,e^{-\mathfrak{F}(s-\tau)}r).
\end{split}
\end{equation}

Therefore for each $(x,r)\in U_R\times [0,1)$,
\begin{equation}
   \begin{split}
        \int^r_0 w^{-1} k(x,e^{-\mathfrak{F}(s-\tau)}w) \,d w &= \int^{re^{-\mathfrak{F}(s-\tau)}}_0 w^{-1} k(x,w) \,d w\\
        &= f(re^{-\mathfrak{F}(s-\tau)})\leq  f(1)\leq  \e_0.
   \end{split}
\end{equation}

Hence for all $(x,r)\in U_R\times [0,1)$ and $R$ sufficiently large, 
\begin{equation}
 \displaystyle \int^r_0  s\left(\fint_{B_{g_{R,0}}(x,s)}\mathcal{R}_{g_{R,0}}  \, d\mathrm{vol}_{g_{R,0}}\right) dr\leq C_n\e_0 +o(1)r^2.
\end{equation}

The claim now follows from Claim~\ref{claim:PIC1-preserved} and the fact that $|\mathrm{R}|\leq \b_n |\mathrm{scal}(\mathrm{R})|$ for all $\mathrm{R}\in \mathrm{C}_{\mathrm{PIC1}}$.
\end{proof}

By Claim~\ref{claim:Morrey-preserved}, if we require $\e_0$ to be small enough, we see that for all $R\to+\infty$, $ \e_0^{-2}g_{R,0}$ satisfies the assumptions in Theorem~\ref{thm:pseudo-locality}. By re-scaling back, $U_R$ admits a short-time solution $g_R(t),t\in [0, \e_0^2 S_n]$ to the Ricci flow with $g_R(0)=g_{R,0}$ and 
\begin{enumerate}
    \item[(a)] $|\Rm_{g_R}(x,t)|\leq \a_n t^{-1}$;
    \item[(b)] $\mathrm{Rm}(g_R(t))+ \frac12 L_n\e_0^{-2}\, g_R(t)\owedge g_R(t)
    \in \mathrm{C}_{\mathrm{PIC1}}$.
\end{enumerate}
on $U_R\times (0,\e_0^2 S_n]$.  Since $g_{R,0}=g_0$ on any compact subset $\Omega\Subset M$ as $R\to +\infty$. By \cite[Corollary 3.2]{Chen2009} (see also \cite{Simon2008}) and the modified Shi's higher order estimates \cite[Theorem 14.16]{ChowBookII}, we infer that for any $m\in \mathbb{N}$ and $\Omega\Subset M$, we can find $C(n,m,\Omega,g_0)>0$ so that for all $R\to +\infty$,
\begin{align}
\sup_{\Omega\times [0,\hat \e_0 S_n]}|\nabla^m \mathrm{Rm}(g_R(t))|\leq C(n,m,\Omega,g_0).
\end{align}
By working on coordinate charts and  Ascoli-Arzel\`a theorem, we may pass to a subsequence to obtain a smooth solution $g(t)=\lim_{R\rightarrow +\infty}g_R(t)$ of the Ricci flow on $M\times [0,\e_0^2 S_n ]$ with $g(0)=g_0$ so that the estimates in conclusion holds. Moreover, it is a complete solution by \cite[Corollary 3.3]{SimonTopping2016}. This completes the proof by relabelling the constants.
\end{proof}

\section{application to Gap Theorem}
In this section, we will consider complete non-compact manifolds with non-negative complex sectional curvature and small integral average quadratic curvature decay.  Before we prove the main Theorem, we first prove Theorem~\ref{thm:gap-PIC1}, i.e. the gap Theorem under  Euclidean volume condition.  The relaxation of curvature condition is motivated by the recent work of Lott \cite{Lott2019}.

\begin{proof}[Proof of Theorem~\ref{thm:gap-PIC1}]
 By \cite[Theorem 1.1 \& Corollary 4.1]{HeLee2021} (see also \cite{Lai2019,SimonTopping2017}), $M$ is diffeomorphic to $\mathbb{R}^n$ and admits a long-time solution $g(t)$ to the Ricci flow on $M\times [0,+\infty)$ with $\mathrm{Rm}(g(t))\in \mathrm{C}_{\mathrm{PIC1}}$ and $|\Rm(g(t))|\leq \a t^{-1}$ for some $\a>0$.  It is well-known that the asymptotic volume ratio is preserved under Ricci flow with $\Ric(g(t))\geq 0$ and $|\Rm(g(t))|\leq \a t^{-1}$, for instances see the proof of \cite[Theorem 7]{Yokota2008}. If $n=3$, it also follows from \cite{LeeTam2022} that the non-negativity of sectional curvature is also preserved. Therefore, there exists $v_0>0$ such that for all $x\in M, r>0$ and $t>0$, we have
 \begin{equation}
 \label{AVR-all-t}    \mathrm{Vol}_{g(t)}\left( B_{g(t)}(x,r)\right)\geq v_0 r^n.
 \end{equation}
 
By Lemma~\ref{lma:R-improve}, $\rheat \mathcal{R}\leq \mathcal{R}^2$ and hence the maximum principle implies that for all $(x,t)\in M\times (0,+\infty)$,
\begin{equation}
    \mathcal{R}(x,t)\leq \int_M G(x,t;y,0) \mathcal{R}(y,0)\, d\mathrm{vol}_{g_0}(y)
\end{equation}
where $G(x,t;y,s)$ denotes the heat kernel on $M$. The maximum principle can be justified by applying it on $[s,t]$ for $s>0$ and followed by passing $s\to 0$ or applying the localized maximum principle \cite[Theorem 1.1]{LeeTam2022} as in the proof of Theorem~\ref{thm:pseudo-locality} and followed by exhaustion argument.  
By \eqref{AVR-all-t} and Lemma~\ref{lma:improved-heat}, $G(x,t;y,s)$ satisfies the Gaussian  estimate (see also 
\cite[Proposition 3.1]{BamlerCabezasWilking2019}):
\begin{equation}
    G(x,t;y,0)\leq \frac{C(n,v_0,\a)}{t^{n/2}} \exp\left(-\frac{d_{g_0}(x,y)^2}{C(n,v_0,\a)t} \right)
\end{equation}
for all $x,y\in M$ and $t>0$. Fix $x_0\in M$ and $t$ sufficiently large. In what follows, we will use $C_i$ to denote constants depending only on $n,v_0,\a$.

Then, argue as in the proof of Theorem~\ref{thm:pseudo-locality}. Stokes' Theorem and co-area formula imply
\begin{equation}
    \begin{split}
    \mathcal{R}(x_0,t)&\leq \int_M G(x_0,t;y,0)\, \mathcal{R}(y,0)\,d\mathrm{vol}_{g_0}(y)\\
    &\leq \int^\infty_0  \frac{ r^n}{t^{n/2}}\cdot \exp\left(-\frac{r^2}{C_1t} \right)\frac{C_1}{rt} k(x_0,r) dr.
    \end{split}
\end{equation}

By assumption, for any $\delta>0$, there exists $r_0>0$ such that for all $r>r_0$, $k(x_0,r)<\delta$. We split the integral using $r_0$.
\begin{equation}
    \begin{split}
 &\quad \int^\infty_0  \frac{ r^n}{t^{n/2}}\cdot \exp\left(-\frac{r^2}{C_1t} \right)\frac{C_1}{rt} k(x_0,r)  dr\\
 &=\left(\int^\infty_{r_0}+ \int^{r_0}_0 \right) \frac{ r^n}{t^{n/2}}\cdot \exp\left(-\frac{r^2}{C_1t} \right)\frac{C_1}{rt} k(x_0,r)  dr\\
 &=\mathbf{I}+\mathbf{II}.
    \end{split}
\end{equation}

We let $\Lambda(g_0,r_0,\delta)>0$ be such that $\mathcal{R}_{g_0}\leq \Lambda$ on $B_{g_0}(x_0,r_0)$ and hence $k(x_0,r)\leq C_{r_0}r^2$ for all $r\leq r_0$.  Therefore, 
\begin{equation}
    \begin{split}
        \mathbf{II}&\leq C_1 \Lambda\int_0^{r_0}\frac{ r^n}{t^{n/2}} \cdot \exp\left(-\frac{r^2}{C_1t} \right)\frac{r}{t}  dr\\
        &=C_1\Lambda \int^{r_0t^{-1/2}}_0 r^{n+1} dr\\
        &\leq C'(n,r_0,v_0,g_0,\delta ) t^{-1-\frac{n}2}
    \end{split}
\end{equation}
while
\begin{equation}
    \begin{split}
        \mathbf{I}&\leq C_1\delta \int^\infty_{r_0} \frac{ r^n}{t^{n/2}}\cdot \exp\left(-\frac{r^2}{C_1t} \right)\frac{1}{rt} dr\\
        &\leq  t^{-1} C_1\delta \int^\infty_{0} r^{n-1}  \exp\left(-\frac{r^2}{C_1} \right) dr\\
        &\leq C_2t^{-1}\delta. 
    \end{split}
\end{equation}
By letting $t\to+\infty$ and followed by $\delta\to 0$ and the fact that $\mathrm{Rm}(g(t))\in \mathrm{C}_{\mathrm{PIC1}}$,  we conclude
\begin{equation}\label{cur-at-limit}
    \limsup_{t\to +\infty} \, t|\Rm(x_0,t)|=0
\end{equation}

Now consider the re-scaled Ricci flow $g_i(t),t\in [0,+\infty)$ where $g_i(t)=i^{-2}g(i^2 t)$. By Hamilton's compactness \cite{Hamilton1995}, $(M,g_i(t),x_0)$ sub-converges to $(M_\infty,g_\infty(t),x_\infty)$ for $t\in (0,+\infty)$ in the pointed $C^\infty$ Cheeger-Gromov sense as $i\to +\infty$. By the proof of \cite[Theorem 1.2]{SchulzeSimon2013} (which is based on Cheeger and Colding's volume continuity \cite{CheegerColding1997}), $g_\infty(t)$ satisfies $\mathrm{AVR}(g_\infty(t))=\mathrm{AVR}(g_0)$ and $\mathrm{Rm}(g_\infty(t))\in \mathrm{C}_{\mathrm{PIC1}}$ for all $t>0$. Moreover, \eqref{cur-at-limit} implies that $\mathcal{R}(g_\infty(x_\infty,t))=0$ for all $t>0$. The strong maximum principle implies $g_\infty(t)$ is Ricci-flat and hence flat globally on $M_\infty$. This forces $\mathrm{AVR}(g_\infty(t))=\mathrm{AVR}(g_0)=1$ and hence $(M,g_0)$ is flat Euclidean by the rigidity of volume comparison.
\end{proof}

Now we study the case with strong curvature condition but \textbf{without} volume growth assumption. We start with the long-time existence under a slightly weaker condition.
\begin{prop}\label{prop:LongTime}There exists $\e_0(n)>0$ such that the following holds: 
Let $(M^n,g_0)$ be a complete non-compact manifold such that $n\geq 3$, and 
\begin{enumerate}
\item[(i)] $\inf_M \mathrm{K}(g_0)>-\infty$; 
    \item[(ii)] $\mathrm{Rm}(g_0)\in \mathrm{C}_{\mathrm{PIC1}}$ if $n\geq 4$; 
    \item[(iii)] $\mathrm{K}(g_0)\geq 0$ if $n=3$.
\end{enumerate}
Suppose for all $x\in M$, 
       $$\int^{+\infty}_0 s\left(\fint_{B_{g_0}(x,s)}|\Rm(g_0)|\, d\mathrm{vol}_{g_0}\, \right)ds <\e_0.$$
    Then there exist $\a_n>0$ and a long-time solution $g(t)$ to the Ricci flow on $M\times [0,+\infty)$ with $g(0)=g_0$ such that $|\Rm(g(t))|\leq \a_n  t^{-1}$ and $\mathrm{Rm}(g(t))\in \mathrm{C}_{\mathrm{PIC1}}$ for all $t>0$. If in addition, $\mathrm{K}^\mathbb{C}(g_0)\geq 0$, then $\mathrm{K}^\mathbb{C}(g(t))\geq 0$ for all $t>0.$
\end{prop}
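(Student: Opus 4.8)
The plan is to derive the long-time existence from the quantitative short-time existence Theorem~\ref{Thm:RF-existence} by a scaling argument, as announced in the introduction. The decisive observation is that the integral $\int_0^{+\infty} s\,\big(\fint_{B_{g_0}(x,s)}|\Rm(g_0)|\,d\mathrm{vol}_{g_0}\big)\,ds$ is $<\e_0$ \emph{with upper limit $+\infty$}, so condition (c) of Theorem~\ref{Thm:RF-existence} holds for every scale $r>0$. Moreover, since $\mathrm{Rm}(g_0)\in\mathrm{C}_{\mathrm{PIC1}}$ (resp.\ $K(g_0)\ge0$ if $n=3$) and $\tfrac12 g_0\owedge g_0$ (the curvature of the round sphere) lies in $\mathrm{C}_{\mathrm{PIC1}}$, hypothesis (b) of Theorem~\ref{Thm:RF-existence} holds for any $\Lambda_0\in(0,1)$ and any $r$, while (a) is assumed. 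Hence, applying Theorem~\ref{Thm:RF-existence} with $r=k\in\mathbb{N}$ and, say, $\Lambda_0=\tfrac12$ (recalling $\mathrm{diam}(M,g_0)=+\infty$), we obtain for each $k$ a solution $g_k(t)$ of the Ricci flow on $M\times[0,S_n k^2]$ with $g_k(0)=g_0$, $\sup_M|\Rm(g_k(t))|\le\a_n t^{-1}$, and $\mathrm{Rm}(g_k(t))+\tfrac12 L_n\Lambda_0 k^{-2}\,g_k(t)\owedge g_k(t)\in\mathrm{C}_{\mathrm{PIC1}}$ (resp.\ $K(g_k(t))+L_n\Lambda_0 k^{-2}\ge0$ when $n=3$) for all $t\in(0,S_n k^2]$.

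I would then glue these into a single flow on $[0,+\infty)$. Since $g_k(0)=g_0$ and $|\Rm(g_k(t))|\le\a_n t^{-1}$, a uniqueness argument shows $g_k\equiv g_j$ on $M\times[0,S_n(\min\{k,j\})^2]$; in our situation this can be reduced to the Chen--Zhu uniqueness theorem applied to the bounded-curvature Ricci flows on the exhausting domains used to construct the $g_k$ in the proof of Theorem~\ref{Thm:RF-existence}. Therefore $g(t):=g_k(t)$ (for any $k$ with $S_n k^2>t$) is a well-defined smooth complete solution on $M\times[0,+\infty)$ with $g(0)=g_0$ and $|\Rm(g(t))|\le\a_n t^{-1}$ for all $t>0$. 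Fixing $t>0$ and letting $k\to+\infty$ in $\mathrm{Rm}(g(t))+\tfrac12 L_n\Lambda_0 k^{-2}\,g(t)\owedge g(t)\in\mathrm{C}_{\mathrm{PIC1}}$, the error term tends to $0$ and, the cone being closed, $\mathrm{Rm}(g(t))\in\mathrm{C}_{\mathrm{PIC1}}$ for all $t>0$ (with $K(g(t))\ge0$ when $n=3$).

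Finally, assume in addition $\mathrm{K}^\mathbb{C}(g_0)\ge0$. The cone of curvature operators with nonnegative complex sectional curvature is preserved by Ricci flow (a classical fact underlying the differentiable sphere theorem, see e.g.\ \cite{BrendleBook}), and, using the a priori bounds $|\Rm(g(t))|\le\a_n t^{-1}$ and $\mathrm{Rm}(g(t))\in\mathrm{C}_{\mathrm{PIC1}}$ already established on $M\times(0,+\infty)$, I would apply the local maximum principle for invariant curvature cones along complete Ricci flows with curvature bounded by $\a/t$ — as in \cite[Theorem 3.1]{LeeTam2022}, which is invoked for exactly this purpose in the proof of Theorem~\ref{thm:gap-PIC1} — to propagate the condition $\mathrm{K}^\mathbb{C}\ge0$ forward from $t=0$, obtaining $\mathrm{K}^\mathbb{C}(g(t))\ge0$ for all $t>0$. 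The step I expect to be most delicate is the gluing: one must justify a uniqueness statement for flows whose curvature is only controlled by $\a_n t^{-1}$ (so Chen--Zhu does not apply directly and one reduces to it via the bounded-curvature approximations), together with the continuity $g(t)\to g_0$ as $t\to0^+$, which is already part of Theorem~\ref{Thm:RF-existence}; quoting the cone-preservation maximum principle in this non-compact, curvature-unbounded-at-$t=0$ regime is the remaining technical point.
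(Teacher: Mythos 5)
Your overall strategy is the one the paper uses: since the smallness hypothesis holds with upper limit $+\infty$, Theorem~\ref{Thm:RF-existence} applies at every scale $r=k$, producing flows $g_k(t)$ on $M\times[0,S_nk^2]$ with $g_k(0)=g_0$, $|\Rm(g_k(t))|\le\a_n t^{-1}$ and the almost-cone estimates with error $L_n\Lambda_0k^{-2}\to0$. The divergence, and the genuine gap, is in how you pass to $t\in[0,+\infty)$. You glue the $g_k$ by asserting that they coincide on common time intervals via a uniqueness theorem ``reduced to Chen--Zhu applied to the bounded-curvature Ricci flows on the exhausting domains.'' This reduction does not work as stated: Chen--Zhu uniqueness is for complete flows with \emph{bounded} curvature on a fixed manifold with the same initial metric, whereas the flows $g_k$ only satisfy $|\Rm|\le\a_n t^{-1}$ (the initial curvature is not assumed bounded), and the exhaustion approximations used in Section~\ref{Sec:Shorttime-General} live on different open manifolds $U_R$ with initial metrics $g_{R,0}=e^{2F_R}g_0$ that are conformally modified near $\partial U_R$, so they are not flows with a common complete initial datum to which Chen--Zhu could be applied to compare $g_k$ with $g_j$. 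Uniqueness in the $c/t$-curvature regime with unbounded initial curvature is precisely the kind of statement that is not available off the shelf, so as written the construction of a single flow on $[0,+\infty)$, and with it the limit $k\to\infty$ in the almost-PIC1 estimate (which presupposes $g\equiv g_k$), is unjustified.

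The paper sidesteps uniqueness entirely: since all the $g_i(t)$ start from the same $g_0$ and satisfy $|\Rm(g_i(t))|\le\a_n t^{-1}$, one repeats the sub-sequential convergence argument from the proof of Theorem~\ref{Thm:RF-existence} (local curvature estimates via \cite[Corollary 3.2]{Chen2009}, modified Shi derivative estimates, Ascoli--Arzel\`a and a diagonal argument over compact subsets and compact time intervals) to extract a limit flow $g(t)$ on $M\times[0,+\infty)$ with $g(0)=g_0$ and $|\Rm(g(t))|\le\a_n t^{-1}$; completeness comes from \cite[Corollary 3.3]{SimonTopping2016}. Once this single flow exists, the cone statements are handled as in your last paragraph: the paper obtains both $\mathrm{Rm}(g(t))\in\mathrm{C}_{\mathrm{PIC1}}$ and the preservation of $\mathrm{K}^\mathbb{C}\ge0$ from the local maximum principle \cite[Theorem 3.1]{LeeTam2022}, which is exactly the tool you invoke for the complex sectional curvature (alternatively, PIC1 for the limit also follows by passing estimate (II) for $g_i$ to the limit using closedness of the cone under local smooth convergence). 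So your proposal would be repaired simply by replacing the gluing-by-uniqueness step with this compactness argument, which is already available to you from the proof of Theorem~\ref{Thm:RF-existence}.
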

\begin{proof}
By Theorem~\ref{Thm:RF-existence}, for any $i>0$, there exists a solution $g_i(t)$ to the Ricci flow on $M\times [0,S_ni^2]$ with $g_{i}(0)=g_0$ and $|\Rm(g_i(t))|\leq \a_n t^{-1}$. Using the sub-sequential convergence argument in the proof of Theorem~\ref{Thm:RF-existence}, $g_i(t)$ sub-converges uniformly locally in $C^\infty_{loc}$ to $g(t)$ on $M\times [0,+\infty)$. This proves the existence part. The assertion of $\mathrm{Rm}(g(t))\in \mathrm{C}_{\mathrm{PIC1}}$ and $\mathrm{K}^\mathbb{C}(g(t))\geq 0$ follow from \cite[Theorem 3.1]{LeeTam2022}.
\end{proof}

\begin{rem}
    The lower bound of the sectional curvature is purely for technical reason. It is easy to see that it can be further relaxed to quadratic sectional lower bound. We expect that it can be removed in full generality. 
\end{rem}

We now finish the proof of Theorem~\ref{Thm:GapTheorem-higherD}.
\begin{proof}[Proof of Theorem~\ref{Thm:GapTheorem-higherD}]
By assumption and integrability, for any $\delta>0$, there exists $r_0>1$ such that for all $r>r_0$,
$$\int^\infty_{r} s^{-1}k(x_0,s)\,ds <\delta.$$
We also let $\Lambda(g_0,r_0,\delta)>0$ be large constant such that $\mathcal{R}\leq \Lambda$ on $B_{g_0}(x_0,r_0)$. We will also use $C_i$ to denote any dimensional constants.

We let $g(t)$ be the long-time solution obtained from Proposition~\ref{prop:LongTime}. Since $\mathrm{K}^\mathbb{C}(g(t))\geq 0$,  Lemma~\ref{lma:R-improve} implies $\rheat \mathcal{R}\leq \mathcal{R}^2$ and hence for all $t>0$,
\begin{equation}
\mathcal{R}(x_0,t)\leq \int_M G(x_0,t;y,0)  \mathcal{R}(y,0)\, d\mathrm{vol}_{g_0}(y)
\end{equation}
as in the proof of Theorem~\ref{thm:gap-PIC1}.  By using Lemma~\ref{lma:improved-heat}, Stokes' Theorem and co-area formula implies that for all $\sqrt{t}>r_0$,
\begin{equation}
\begin{split}
\mathcal{R}(x_0,t)&\leq \int_M  \frac{C_1}{\mathrm{Vol}_{g_0} \left( B_{g_0}(x_0,\sqrt{t})\right)}\exp\left(-\frac{d_{g_0}(x,y)^2}{C_1t} \right)\mathcal{R}(y,0)\, d\mathrm{vol}_{g_0}(y)\\
&=\frac{C_n}{t}\left(\int^\infty_{\sqrt{t}}+\int^{\sqrt{t}}_{r_0}+\int^{r_0}_0   \right)\frac{\mathrm{Vol}_{g_0} \left( B_{g_0}(x_0,r)\right)}{\mathrm{Vol}_{g_0} \left( B_{g_0}(x_0,\sqrt{t})\right)} \exp\left(-\frac{r^2}{C_1t} \right)\cdot \frac{k(x_0,r)}{r}\,dr\\
&= \mathbf{I}+\mathbf{II}+\mathbf{III}.
\end{split}
\end{equation}

When $r>\sqrt{t}$, we can appeal to volume comparison to deduce that 
\begin{equation}
\begin{split}
\mathbf{I}&\leq \frac{C_2}{t} \int^\infty_{\sqrt{t}} \left(\frac{r}{\sqrt{t}} \right)^n \exp\left(-\frac{r^2}{C_1t} \right)\cdot \frac{k(x_0,r)}{r}\,dr\leq \frac{C_3}{t} \int^\infty_{\sqrt{t}} \frac{k(x_0,r)}{r}\,dr.
\end{split}
\end{equation}

When $r\in [r_0,\sqrt{t}]$, we argue similarly:
\begin{equation}
\begin{split}
\mathbf{II}&\leq \frac{C_n }{t} \int^{\sqrt{t}}_{r_0}\exp\left(-\frac{r^2}{C_1t} \right) \frac{k(x_0,r)}{r} \,dr \leq \frac{C_4}{t} \int^{\sqrt{t}}_{r_0}\frac{k(x_0,r)}{r}\,dr
\end{split}
\end{equation}
so that $\mathbf{I}+\mathbf{II}\leq C_5 \delta t^{-1}$.

When $r\leq r_0$, we use the estimate $k(x_0,r)\leq \Lambda r^2$ and Yau's linear volume growth to deduce that   
\begin{equation}
\begin{split}
\mathbf{III}&\leq \frac{C_6\Lambda }{t} \int^{r_0}_0 \frac{\mathrm{Vol}_{g_0} \left( B_{g_0}(x_0,r)\right)}{\mathrm{Vol}_{g_0} \left( B_{g_0}(x_0,\sqrt{t})\right)} \exp\left(-\frac{r^2}{C_1t} \right)r \,dr\\
&\leq \frac{C_7 r_0^{n+2} \Lambda t^{-1-\frac12}  }{{\mathrm{Vol}_{g_0} \left( B_{g_0}(x_0,1)\right)} }  .
\end{split}
\end{equation}

Therefore,  for all $t\to +\infty$,
\begin{equation}
 t\cdot \mathcal{R}(x_0,t)\leq  C_5 \delta  + \frac{C_7r_0^{n+2} \Lambda t^{-\frac12}  }{{\mathrm{Vol}_{g_0} \left( B_{g_0}(x_0,1)\right)} }.
\end{equation}

By Brendle's Harnack inequality \cite{Brendle2009} and letting $\delta\to 0$, we deduce that for all $t>0$, $\mathcal{R}(x_0,t)=0$.  Since $\mathrm{K}^\mathbb{C}(g(t))\geq 0$, strong maximum principle implies that $g(t)$ is flat for all $t>0$ and hence $g_0$ is flat. This completes the proof.
\end{proof}

\section{Gap Theorem in dimension three}

The three dimensional case with only $\Ric\geq 0$ is more challenging due to the failure of Lemma~\ref{lma:R-improve}. In the presence of point-wise curvature decay, we will combine it with the methods in \K geometry to prove an optimal gap result.

\begin{proof}[Proof of Theorem~\ref{thm:3D-gap}]

By the work of Liu \cite{Liu2013}, either $M$ is diffeomorphic to $\mathbb{R}^3$ or the universal cover of $(M^3,g_0)$ splits isometrically as $\Sigma^2\times \mathbb{R}$ such that $\Sigma$ has non-negative sectional curvature. 
In case $M$ is not topologically Euclidean. 
Since $g_0$ has quadratic curvature decay at infinity, the universal cover cannot split isometrically as $\mathbb{S}^2\times \mathbb{R}$. 
{
We may argue as follows: Let $\e>0$ such that $K_0:=\{\mathcal{R}_M\ge \e\}$ is a nonempty compact set of $M$ and $\pi:\mathbb{S}^2\times \mathbb{R}\to M$ be a Riemannian covering map. Suppose $(a,z_0)\in \mathbb{S}^2\times \mathbb{R}$ and $\pi(a,z_0)\in K_0$. Then for any $w\in  \mathbb{R}, b\in \mathbb{S}^2$, $\pi(a,w)\in K_0$ and 
\begin{eqnarray*}
d_M(\pi(b,w), \pi(a,z_0))&\leq& \text{diam}_M(K_0)+d_M(\pi(b,w), \pi(a,w))\\
&\leq& \text{diam}_M(K_0)+d_{\mathbb{S}^2}(a,b)\\
&\leq& \text{diam}_M(K_0)+\text{diam}_{\mathbb{S}^2}(\mathbb{S}^2).
\end{eqnarray*}
This implies that $M$ has finite diameter. Contradiction.}
Therefore, $\Sigma^2$ must be topologically $\mathbb{R}^2$. We now use an argument originated from \K geometry. By \cite[Theorem 1.1, Theorem 1.2 and Proposition 1.1]{NiShiTam},  there exists $u\in C^\infty_{loc}(M)$ such that $\Delta_M u=\mathcal{R}_M\geq 0$ on $M$ with $u=o(\log r)$ as $r\to +\infty$. By \cite[Theorem 1.3]{NiShiTam}, $|\nabla u|$ is uniformly bounded on $M$. We lift $u$ to $\tilde M$ so that we might assume $\Delta_{\tilde M}u=\mathcal{R}_{\tilde M}$ on $\tilde M$ and $u$ is still sub-logarithm growth on $\tilde M$ since the covering map is distance non-increasing. 
For $(z,y)\in \Sigma\times \mathbb{R}=\tilde M$, since $\tilde M$ splits isometrically, $\partial_y u$ is bounded harmonic function on $\tilde M$. Hence the Cheng-Yau's gradient estimate implies $\partial_y\partial_y u=0$ on $\tilde M$. In particular, $\Delta_\Sigma u(z,y_0)=\mathcal{R}_\Sigma(z)$ for any fixed $y_0\in \mathbb{R}$. 

Since $\Sigma^2$ is two-dimensional with non-negative curvature, the Laplacian comparison implies that for a fixed $z_0\in \Sigma$, $r_\Sigma(\cdot)=d_{\Sigma}(\cdot,z_0)$ satisfies $\Delta_{\Sigma} \log r_\Sigma\leq 0$ in the sense of barrier for $r_\Sigma>0$. By applying the maximum principle to $u(z,y_0)-\e \log r_\Sigma $ for $\e\to 0^+$, we conclude that $u(z,y_0)$ attains its maximum at interior of $\Sigma$ and hence it must be constant by the strong maximum principle. Therefore, $\mathcal{R}_\Sigma=\mathcal{R}_M=0$. It follows that $M$ is flat in this case since its sectional curvature is non-negative. 

It remains to consider the case of $M^3=\mathbb{R}^3$ topologically. We first appeal to the work of Reiris \cite[Theorem 1.1]{Reiris2015} to see that $g_0$ must be of Euclidean volume growth. In particular,  as in the proof of Theorem~\ref{thm:gap-PIC1},  there exist $v,\a>0$ and a long-time solution $g(t)$ on $M\times[0,+\infty)$ such that for all $(x,t)\in M\times (0,+\infty)$,
\begin{enumerate}
\item[(i)] $|\Rm(g(x,t))|\leq \a t^{-1}$;
\item[(ii)] $\mathrm{Vol}_{g(t)}\left( B_{g(t)}(x,r)\right)\geq v_0 r^3$ for all $r>0$;
\item[(iii)] $\Ric(g(x,t))\geq 0$.
\end{enumerate}
The properties of $g(t)$ are scaling invariant.

We consider the blow-down Ricci flow as usual. Let $R_i\to +\infty$ and $g_i(t)=R_i^{-2}g(R_i^2 t)$ so that  $(M_i^3,g_i(t),x_i)=(M,g_i(t),x_0)$ sub-converges to $(M_\infty,g_\infty(t),x_\infty)$ in the smooth pointed Cheeger-Gromov sense for $t\in (0,+\infty)$ as $i\to +\infty$.  Moreover,  the limiting Ricci flow $g_\infty(t)$ satisfies $\mathrm{AVR}(g_\infty(t))=\mathrm{AVR}(g_0)>0$ for all $t>0$.  Clearly, $M_\infty$ is still topologically $\mathbb{R}^3$. Our goal is to show that the asymptotic volume ratio satisfies $\mathrm{AVR}(g_\infty(t))=1$.  In what follow, we will use $C_i$ to denote constant depending only on $\a,v_0$.

For $r>1$ sufficiently large.  Let $z_\infty \in M_\infty$ be such that $d_{g_\infty(1)}(x_\infty, z_\infty)=r$. By the smooth convergence and distance distortion \cite[Corollary 3.3]{SimonTopping2016},  there exists $z_i\in M$ with $d_{g_0}(x_0,z_i)\in (\frac12 R_i r, 2 R_i r)$ such that $\mathcal{R}(g_\infty(z_\infty,1))$ is the limit of $\mathcal{R}(g_i(z_i,1))$.  Denote $g_{i,0}=R_i^{-2}g_0$ for notation convenience. On $B_{g_{i,0}}(z_i, \frac12 r)$,  assumption implies $|\Rm(g_{i,0})|\leq 4C_0r^{-2}$.  Moreover,  by volume comparison for $z\in B_{g_{i,0}}(z_i,\frac12 r)$ and $0<s\leq \frac12 r$,
\begin{equation}
\begin{split}
k_{g_{i,0}}(z,s)
&=(R_is)^2\fint_{B_{g_0}(z,R_is)} |\Rm(g_0)|\, d\mathrm{vol}_{g_0}\\
&\leq (R_is)^2\cdot \left(\frac{s+4r}{s} \right)^n \fint_{B_{g_0}(x_0,R_i(s+2r))} |\Rm(g_0)|\, d\mathrm{vol}_{g_0}\\
&\leq  \left(\frac{s+4r}{s} \right)^n \cdot  k(x_0,R_i(s+2r)).
\end{split}
\end{equation}

Using the assumption on asymptotic of $k(x_0,\cdot)$, for any $\sigma>0$, there exists $N\in \mathbb{N}$ such that for all $i>N$, we have 
\begin{equation}\label{2bdd of k}
\begin{split}
k_{g_{i,0}}(z,s) &\leq  \min\left\{ \left(\frac{4r+s}{s} \right)^n \sigma, 4C_0 s^2r^{-2} \right\}
\end{split}
\end{equation}
for all $z\in B_{g_{i,0}}(z_i,\frac12 r)$ and $0<s\leq \frac12 r$.

By \cite[Corollary 4.1]{LeeTam2022} and scaling argument, there exist 
$S_1,C_1>0$ such that 
$$\mathrm{K}(g_i(t))\geq -C_1r^{-2}$$ on $B_{g_{i,0}}(z_i,\frac14r)\times [0,S_1r^2]$.  We now intend to improve the lower bound of the sectional curvature.  Let $\phi(x,t)$ be the negative part of the lowest eigenvalue of $\mathrm{Rm}(g_i(x,t))$ with respect to the cone of non-negative sectional curvature. It is known that $\rheat \phi \leq \mathcal{R}\phi +c_0 \phi^2$ for some dimensional constant $c_0>0$ by \cite{BamlerCabezasWilking2019} and hence 
$$\rheat  (e^{-C_1c_0r^{-2}t}\phi) \leq \mathcal{R} (e^{-C_1c_0r^{-2}t}\phi) $$
on $B_{g_{i,0}}(z_i,\frac14r)\times [0,S_1r^2]$. We now apply \cite[Corollary 3.1]{LeeTam2022} to conclude that,  for any $\ell>0$ there exists $S_\ell>0$ such that 
\begin{equation}
e^{-c_0C_1r^{-2}t}\phi(x,t)-\int_{B_{g_{i,0}}(z_i,\frac14r)} G_i(x,t;y,0) \phi(y,0) \,d\mathrm{vol}_{g_{i,0}}(y)\leq  16^{\ell+1}t^\ell r^{-2(\ell+1)}
\end{equation}
on $B_{g_{i,0}}(z_i,\frac18 r)\times [0,S_\ell r^2]$, 
where $G_i$ denotes the dirichlet heat kernel of $g_i(t)$ on $B_{g_{i,0}}(z_i,\frac14 r)$.  
By the properties of $g_i(t)$ and Lemma~\ref{lma:improved-heat}, $G_i(x,t;y,s)$ satisfies a 
uniform Gaussian estimate and hence for $x\in B_{g_{i,0}}(z_i,\frac18r)$,
\begin{equation}\label{phi est}
\begin{split}
&\quad \int_{B_{g_{i,0}}(z_i,\frac14r)} G_i(x,t;y,0) \phi(y,0) \,d\mathrm{vol}_{g_{i,0}}(y)\\
& \leq  \sigma^\e \int^{\frac38 r}_0  \frac{C_2s^{n-1}}{t^{1+\frac{n}{2}}} \exp\left(-\frac{s^2}{C_2t} \right)  \left(\frac{4r+s}{s} \right)^{n\e} s^{2(1-\e)} r^{-2(1-\e)}ds\\
&\quad +\frac{C_2r^{n-2}}{t^{\frac{n}{2}}}\exp\left(-\frac{r^2}{C_2t} \right)k_{g_{i,0}}\left(z_i,\frac{3}{8}r\right)\\
&\leq C_3t^{-\e (1+\frac12 n)} r^{-2(1-\e)+n\e} \sigma^\e \int^{+\infty}_0  s^{n-1+2(1-\e)-n\e}\exp\left(-\frac{s^2}{C_2} \right) ds\\
&\quad +C_3r^{-2}\sigma
\end{split}
\end{equation}
where $\e$ is arbitrary in $(0,1)$ and $n=3$ now. Here we have used the bounds of $k_{g_{i,0}}(x,\cdot)$ from \eqref{2bdd of k}, interpolating between both of them. If we choose $\e$ sufficiently small so that $\e(1+\frac{n}{2} )<\frac12$ and then we see that the sectional curvature of $g_i(t)$ is bounded from below by 
$$\mathrm{K}(g_i(t))\geq -L(t)=-C_4\left(16^{\ell+1} t^{\ell} r^{-2(\ell+1)} + \sigma^\e t^{-\e( 1+\frac12 n)} r^{-2(1-\e)+n\e}+r^{-2}\sigma\right)$$
on $B_{g_{i,0}}(z_i,\frac18 r) \times [0,S_\ell r^2]$. 

Now we can proceed in a similar way as in the proof of Theorem~\ref{thm:pseudo-locality}.  Since $\mathrm{Rm}(g_i(t))+L\,g_i(t)\owedge g_i(t)/2$ is in the cone of non-negative sectional curvature,  the scalar curvature $\mathcal{R}$ satisfies
\begin{equation}
\begin{split}
\rheat \mathcal{R}&= 2|\widetilde{Ric}-2Lg|^2\\
&\leq  \mathcal{R}^2 +8 L \mathcal{R} +36L^2.
\end{split}
\end{equation}
In particular, $\varphi=\exp(-8\int^t_0 L(s)\,ds) \cdot \mathcal{R}$ satisfies
\begin{equation}
    \rheat \varphi \leq \mathcal{R} \varphi +36L^2
\end{equation}
on $B_{g_{i,0}}(z_i,\frac18 r) \times [0,S_\ell r^2]$. 

Define 
\begin{equation}
    \begin{split}
        u(x,t)&=\int_{B_{g_{i,0}}(z_i,\frac14r)}G_i(x,t;y,0)\mathcal{R}(y,0)\,d\mathrm{vol}_{g_{i,0}}(y)\\
        &\quad +36\int^t_0\int_{B_{g_{i,0}}(z_i,\frac14r)} G_i(x,t;y,s)L^2(s)\, d\mathrm{vol}_{g_{i}(s)}(y) ds
    \end{split}
\end{equation}
so that $\rheat u=\mathcal{R} u +36L^2$ and hence \cite[Corollary 3.1]{LeeTam2022} shows that by shrinking $S_\ell$ if necessary, for all $t\in[0,S_\ell r^2]$, 
\begin{equation}
    \varphi(z_i,t)=\exp\left(-8\int^t_0 L(s)\,ds\right) \cdot \mathcal{R}_{g_i(t)}(z_i)\leq u(z_i,t)+32^{\ell+1}t^{\ell}r^{-2(\ell+1)}
\end{equation}

We can use \cite[Proposition 4.1]{LeeTam2022}  
or Lemma~\ref{lma:improved-heat}, and argue in the same way as in \eqref{phi est} to see that
\begin{equation}
    \begin{split}
&\int_{B_{g_{i,0}}(z_i,\frac14r)}G_i(z_i,t;y,0)\mathcal{R}(y,0)\,d\mathrm{vol}_{g_{i,0}}(y)
\\
&\leq \int_0^{\frac{r}{4}} \frac{C}{t^{\frac{n}{2}}}\exp\left(-\frac{s^2}{C t} \right)\int_{\partial B_{g_{i,0}}(z_i,\frac14s)}\mathcal{R}\, ds\\
&=\frac{Cr^{n-2}}{t^{\frac{n}{2}}}\exp\left(-\frac{r^2}{Ct} \right)k_{g_{i,0}}\left(z_i,\frac{r}{4}\right)\\
&\quad+\int^{\frac14 r}_0  \frac{C_2s^{n-1}}{t^{1+\frac{n}{2}}} \exp\left(-\frac{s^2}{C_2t} \right) k_{g_{i,0}}\left(z_i,s\right)ds\\
&\leq C r^{-2}\sigma+\sigma^\e \int^{\frac14 r}_0  \frac{C_2s^{n-1}}{t^{1+\frac{n}{2}}} \exp\left(-\frac{s^2}{C_2t} \right)  \left(\frac{4r+s}{s} \right)^{n\e} s^{2(1-\e)} r^{-2(1-\e)}ds\\
&\leq C_5 \left(r^{-2}\sigma+\sigma^\e t^{-\e( 1+\frac12 n)} r^{-2(1-\e)+n\e}\right).
    \end{split}
\end{equation}

Similarly, by \cite[Proposition 4.1]{LeeTam2022} 
\begin{equation}
    \begin{split}
\int_{B_{g_{i,0}}(z_i,\frac14r)} G_i(z_i,t;y,s)\, d\mathrm{vol}_{g_{i}(s)}(y)&\leq \int_0^{\infty}\frac{C'w^{n-1}}{(t-s)^{\frac{n}{2}}}\exp\left(-\frac{w^2}{C(t-s)}\right)\,dw\\
&=C'\int_0^{\infty}\tau^{n-1}\exp\left(-\frac{\tau^2}{C}\right)\,d\tau
\leq C_6.
\end{split}
\end{equation}
Hence
\begin{equation}
    \begin{split}
    &\quad 36\int^t_0\int_{B_{g_{i,0}}(z_i,\frac14r)} G_i(x,t;y,s)L^2(s)\, d\mathrm{vol}_{g_{i}(s)}(y)\, ds\\
    &\leq C\int_0^{t}L^2(s)\,ds\\
    &\leq C_7\left(t^{2l+1}r^{-4(l+1)}+\sigma^{2\e}t^{1-2\e( 1+\frac12 n)} r^{-4(1-\e)+2n\e}+r^{-4}\sigma^2t\right).
    \end{split}
\end{equation}

We fix $t=1$ and $\ell=10$. We might assume $r$ to be large enough so that $1\in [0,S_\ell r^2]$ and hence the estimates applies at $t=1$. Also note that $\e$ is sufficiently small so that we have 
\[
\int_0^{1}L(s)\,ds\leq C\left(t^{l+1}r^{-2(l+1)}+\sigma^{\e}t^{1-\e( 1+\frac12 n)}r^{-2(1-\e)+n\e}+r^{-2}\sigma t\right)\Big|_{t=1}\leq C_8
\]
and 
\begin{equation}
    \begin{split}
    \varphi(z_i,1)\leq C_9r^{-2}\left(r^{-2\ell}+\sigma+\sigma^\e  r^{(n+2)\e}+r^{-2(2l+1)}+\sigma^{2\e} r^{-2(1-2\e)+2n\e}+r^{-2}\sigma^2\right).
    \end{split}
\end{equation}

Since $r$ is any arbitrarily large number and $\e$ is a small positive number, we conclude after passing $i\to +\infty$ and followed by $\sigma\to 0^+$
that $g_\infty(1)$ has $\Ric\geq 0$ and satisfies 
\begin{equation}
    |\mathrm{Rm}(g_\infty(z_\infty,1))| \leq C_{10} \left(d_{g_\infty(1)}(x_\infty,z_\infty) \right)^{-22}
\end{equation}
for $z_\infty \to +\infty$. Now a result of Greene-Petersen-Zhu  \cite[Theorem 2]{GreenePetersenZhu1994} can be applied to show that $g_\infty(1)$ is indeed flat since $M_\infty$ is topologically $\mathbb{R}^3$.  As $\mathrm{AVR}(g_\infty(1))>0$, $M_\infty$ is isometric to $\mathbb{R}^3$ and hence $\mathrm{AVR}(g_\infty(1))=\mathrm{AVR}(g_0)=1$. Now the flatness of $(M, g_0)$ follows from the rigidity case of the volume comparison theorem.  Alternatively, one can blow-down the manifold again together with the fast curvature decay to deduce flatness by pseudo-locality. This completes the proof. 
\end{proof}

\section{application to almost flatness}

In this  section, we discuss another application of the Ricci flow smoothing. We consider compact manifolds with almost vanishing curvature in the sense of integral average curvature. This is motivated by the recent work of Chen-Wei-Ye \cite{ChenWeiYe2022}.

\begin{proof}[Proof of Theorem~\ref{thm:almost-flat}]
By re-scaling, we assume $\mathrm{diam}(M,g_0)=1$. We will use the Ricci flow smoothing to find a better metric from $g_0$ using pseudo-locality Theorem. We only consider the case $n\geq 4$ while the case of $n=3$ can be done similarly as by strengthening the cone from $\mathrm{C}_{\mathrm{PIC1}}$ to cone of non-negative sectional curvature. 

We let $\e_1,\e_2>0$ be small dimensional constants to be determined. We will determine $\e_i$ so that metric $g_0$ satisfying 
\begin{equation}
    \left\{
    \begin{array}{ll}
        \mathrm{Rm}(g_0)+\e_1 g_0\owedge g_0\in \mathrm{C}_{\mathrm{PIC1}};\\
              \displaystyle \int^{1}_0 s\left( \fint_{B_{g_0}(x,s)}|\Rm(g_0)|\, d\mathrm{vol}_{g_0}\right)\, ds<\e_2
    \end{array}
    \right.
\end{equation}
for all $x\in M$, can be deformed to one which is almost flat.

We apply Theorem~\ref{thm:pseudo-locality} on $g_0$ with $r=\mathrm{diam}(M,g_0)=1$ with $\Lambda_0=\e_1$ so that there exists a Ricci flow $g(t),t\in [0,S_n]$ with $g(0)=g_0$ and $S_n<1$. We claim that if $\e_1$ and $\e_2$ are sufficiently small, then $\tilde g=g(S_n)$ will satisfy the almost flatness condition in  Gromov–Ruh Theorem \cite{Gromov1978,Ruh1982}. This follows by refining the proof of Theorem~\ref{thm:pseudo-locality}. If $\e_1$ is sufficiently small, we know that $\Ric(g(t))\geq -1$ along the flow and hence, 
\begin{equation}
    d_{g(t)}(x,y)\leq e^t d_{g_0}(x,y)
\end{equation}
for all $t\in [0,S_n]$ and $x,y\in M$. Hence, $\mathrm{diam}(M,g(t))\leq C_0(n)$. Since $\mathrm{Rm}(g(t))+L_n\e_1 g(t)\owedge g(t)\in \mathrm{C}_{\mathrm{PIC1}}$, it suffices to estimate $\mathcal{R}(g(t))$ from above by shrinking $\e_1$. Here we might assume $L_n$ to be large. As in the proof of Theorem~\ref{thm:pseudo-locality}, we consider the twisted curvature tensor $\widetilde{\Rm}=\Rm+2L_n\e_1 g(t)\owedge g(t)$ so that $\varphi=e^{-4L\e_1 n(n-1)t}\tilde{\mathcal{R}}$ satisfies $\rheat \varphi\leq \mathcal{R} \varphi$ and thus the same analysis as in the proof of Theorem~\ref{thm:pseudo-locality} (but simpler) yields 
\begin{equation}
    \begin{split}
        \varphi(x,t)&\leq  \int_M G(x,t;y,0) \tilde{\mathcal{R}}(y,0) \,d\mathrm{vol}_{g_0}(y)\leq  C_n (\e_1+\e_2) t^{-1}.
    \end{split}
\end{equation}

And hence, $\mathrm{diam}(M,\tilde g)^2|\Rm(\tilde g)|\leq C_n(\e_1+\e_2)S_n^{-1}+C_n \e_1$ for some dimensional constant $C_n>0$. By choosing $\e_1,\e_2>0$ both small enough so that $C_n(\e_1+\e_2)S_n^{-1}$ is smaller than the dimensional constant in Gromov–Ruh Theorem \cite{Gromov1978,Ruh1982}, we conclude that $M$ supports metric $\tilde g$ which is almost flat and hence $M$  is diffeomorphic to an infranil manifold. Result follows by relabelling the constants.
\end{proof}

\end{document}